\documentclass[11pt]{article}
\usepackage[a4paper,margin=1in]{geometry}
\usepackage[T1]{fontenc}
\usepackage{amsmath,amssymb,amsthm,mathtools,mathrsfs,bm}
\usepackage{bbm}
\usepackage{enumitem}
\usepackage{booktabs}
\usepackage{array}
\usepackage{graphicx}
\usepackage{xcolor}
\usepackage{hyperref}
\usepackage{cleveref}
\usepackage{tikz}
\usepackage{pgfplots}
\usepackage{pgfplotstable}
\pgfplotsset{compat=1.18}
\hypersetup{colorlinks=true,linkcolor=blue!50!black,citecolor=blue!50!black,urlcolor=blue!50!black}
\newtheorem{theorem}{Theorem}[section]
\newtheorem{proposition}[theorem]{Proposition}
\newtheorem{lemma}[theorem]{Lemma}

\theoremstyle{definition}

\newtheorem{remark}[theorem]{Remark}

\newcommand{\R}{\mathbb{R}}

\newcommand{\Var}{\mathrm{Var}}
\newcommand{\Cov}{\mathrm{Cov}}
\newcommand{\tr}{\mathrm{tr}}
\newcommand{\diag}{\mathrm{diag}}
\newcommand{\op}{\mathrm{op}}
\newcommand{\F}{\mathrm{F}}
\newcommand{\dd}{\mathrm{d}}

\newcommand{\dto}{\xrightarrow[]{d}}
\newcommand{\norm}[1]{\left\lVert #1 \right\rVert}

\newcommand{\Ln}{L_n}

\newcommand{\Iperp}{\mathcal I^{\perp}}

\title{LOCAL ASYMPTOTIC NORMALITY FOR MIXED FRACTIONAL\\
	ORNSTEIN--UHLENBECK PROCESS UNDER HIGH-FREQUENCY OBSERVATION}
\author{
	Cai Chunhao\\
	Department of Mathematics, Sun Yat Sen University\\
	\texttt{caichh9@mail.sysu.edu.cn}
	\and
	Shang Yiwu\\
	School of Mathematical Sciences, Nankai University\\
	\texttt{shanyw@mail.nankai.edu.cn}
	\and
	Zhang Cong\\
	Department of Mathematics, Zhejiang University\\
	\texttt{zcong@zju.edu.cn}
}

\begin{document}
	\maketitle
	
	\begin{abstract}
		This paper establishes a complete three parameter LAN framework for the mixed fractional Ornstein–Uhlenbeck process
		\[
		\dd X_t=-\alpha X_t\,\dd t+\dd M_t^H,
		\qquad M_t^H=\sigma B_t^H+W_t,
		\]
		under stationary initialization and discrete high-frequency observation of the Gaussian vector $X^{(n)}=(X_{k\Delta_n})_{1\le k\le n}^\top$, with $\Delta_n\to0$ and $T_n=n\Delta_n\to\infty$, in the supercritical regime $H>3/4$.
		Following the pattern of the mixed fractional Brownian motion paper \cite{Cai2026}, we first write the exact Gaussian score functions as centered quadratic forms, then isolate the explicit $\log \Delta_n$ blow-up term in the $H$-score, orthogonalize the $H$-remainder against the $\sigma$-score, and finally project the $\alpha$-score onto the orthogonal complement of the first two directions.
		This yields a non-diagonal lower-triangular rate matrix for the raw score vector and a diagonal LAN expansion for the projected central sequence. Each projected coordinate lives on the $(n\Delta_n)^{1/2}$ scale, while the off-diagonal entries of the rate matrix encode the asymptotic collinearity of the $\sigma$ and $H$ directions together with the residual correlation of the $\alpha$ direction. We also include a figure design tailored to the three parameter setting, based on low frequency profile plots and pairwise Gaussian contour plots of the projected limit.
	\end{abstract}
	
	\section{Introduction}
	We consider the mixed fractional Ornstein--Uhlenbeck (mfOU) process
	\begin{equation}\label{eq:intro-model}
		\dd X_t=-\alpha X_t\,\dd t+\dd M_t^H,
		\qquad M_t^H=\sigma B_t^H+W_t,
		\qquad t\ge 0,
	\end{equation}
	where $B^H_t$ is a fractional Brownian motion with Hurst index $H\in(3/4,1)$, $W_t$ is a standard Brownian motion independent of $B_t^H$, with unknown parameter $\theta=(\sigma,H,\alpha)$. For $\Delta_n\to0$, we observe the stationary Gaussian vector
	\[
	X^{(n)}=(X_{\Delta_n},X_{2\Delta_n},\dots,X_{n\Delta_n})^\top, 
	\qquad T_n:=n\Delta_n\to\infty.
	\]
	
	The mixed fractional Ornstein-Uhlenbeck process incorporates ideas from three different fields. First, the noise $M_t^H=\sigma B_t^H+W_t$ belongs to the class of mixed Gaussian processes. 
	Mixed fractional Brownian motion was introduced by Cheridito \cite{Cheridito2001}, who showed the phase transition at $H=3/4$. Cai et al. \cite{CaiChiganskyKleptsyna2016} developed a filtering-based canonical innovation representation for mixed Gaussian processes, which provides a general framework for likelihood analysis and change of measure arguments in mixed fractional models.
	
	Second, for fractional Ornstein--Uhlenbeck and related fractional diffusion models, statistical inference has been studied under both continuous and discrete observation schemes. 
	Early likelihood-based analysis for the fractional OU-type model goes back to Kleptsyna and Le Breton \cite{KleptsynaLeBreton2002}. 
	For parameter estimation in fOU models, see also Hu and Nualart \cite{HuNualart2010}, Xiao et al. \cite{XiaoZhangXu2011} in the particular regime of $H$, and Hu et al. \cite{HuNualartZhou2019} for estimation over general Hurst regimes. 
	More recently, Shi et al. \cite{ShiYuZhang2024} studied the spectral density of discrete fOU processes and proposed an approximate Whittle maximum likelihood method. 
	For the mixed fractional OU model case, Chigansky and Kleptsyna \cite{ChiganskyKleptsyna2019} established consistency and asymptotic normality of the maximum likelihood estimator for the drift parameter, while Cai and Zhang \cite{CaiZhang2021} further investigated drift estimation and strong consistency of the MLE.
	
	Third, from the viewpoint of asymptotic theory, high-frequency LAN theory for long-memory Gaussian models has developed rapidly in recent years. 
	Brouste and Fukasawa \cite{BrousteFukasawa2018} proved the LAN property for fractional Gaussian noise under high-frequency observations with a non-diagonal rate matrix. More recently, Cai and Shang \cite{Cai2026} obtained a LAN expansion for mixed fractional Brownian motion in the supercritical regime $H>3/4$. 
	The present paper extends this line of research to the stationary mfOU model with the full parameter vector $\theta=(\sigma,H,\alpha)$ under discrete high-frequency observation. 
	Compared with the mixed fractional Brownian motion case, the additional mean-reversion parameter $\alpha$ creates a further non-orthogonal score direction, so that after removing the explicit $\sigma\log\Delta_n$ term in the $H$-score one needs an additional projection to isolate the raw $\alpha$-direction.
	
	Our aim is to establish a three parameter LAN expansion for the mfOU model, mirroring the proof strategy in \cite{Cai2026}. The main paper treats the supercritical regime $H>3/4$, because in this range the low frequency singularity forces an additional asymptotic collinearity between the $\sigma$-score and the $H$-score. One must therefore first remove the explicit $\sigma\log\Delta_n$ contribution from the raw $H$-score and then project the remainder. In the mfOU model, a third parameter $\alpha$ is present, and after the first two directions have been diagonalized a second projection is needed in order to isolate the raw $\alpha$-direction. The lower regimes $1/2<H<3/4$ and $0<H<1/2$ are treated separately in the appendices at the level of complete score CLTs.
	
	The key phenomenon is the same as for mixed fractional Brownian motion in the supercritical regime $H>3/4$. Low frequencies dominate so strongly that the $H$-score contains a leading component asymptotically parallel to the $\sigma$-score. Hence a direct LAN statement in the raw coordinates is badly conditioned. The cure is a two-step triangular diagonalization on the score side, followed by a third projection for the $\alpha$-direction.
	
	\paragraph{Notation.}
	For an integrable symbol $g$ on $[-\pi,\pi]$, let
	\[
	T_n(g)=\big(\widehat g(i-j)\big)_{1\le i,j\le n},
	\qquad
	\widehat g(k)=\frac1{2\pi}\int_{-\pi}^{\pi} g(\lambda)e^{-ik\lambda}\,\dd\lambda.
	\]
	If $\Sigma_n(\theta)$ is the covariance matrix of $X^{(n)}$, we write
	\[
	M_{i,n}(\theta)=\Sigma_n(\theta)^{-1/2}\,\partial_{i}\Sigma_n(\theta)\,\Sigma_n(\theta)^{-1/2},
	\qquad i\in\{\sigma,H,\alpha\}.
	\]
	Whenever $Z_n\sim \mathcal{N}(0,I_n)$ and $A_n$ is symmetric, we use the centered Gaussian quadratic form notation
	\[
	Q_n(A_n):=Z_n^\top A_nZ_n-\tr(A_n).
	\]
	
	The remainder of this paper proceeds as follows. In Section~2, we state the main LAN theorem and introduce the key matrices and orthogonalization scheme that provide the foundation for the analysis. Section~3 recalls a fundamental CLT for centered Gaussian quadratic forms, which serves as the main technical tool throughout the paper. In Section~4, we carry out a detailed asymptotic analysis of the three score functions, including the necessary projections that diagonalize the score vector and yield the joint central limit theorem. Section~5 completes the proof of the LAN property via matrix expansions and Taylor approximations. Section~6 outlines a figure design tailored to the three parameter setting, providing a visual interpretation of the asymptotic results. Finally, several appendices treat the remaining Hurst regimes \(1/2 < H < 3/4\) and \(0 < H < 1/2\), establishing complete score CLTs and thereby extending the scope of the analysis.
	\section{Main Results}
	Under stationary initialization, $(X_{k\Delta})_{k\in\mathbb Z}$ is centered stationary Gaussian. The discrete spectral density  of the sampled process can be written as 
	\begin{equation}\label{eq:spectral-decomp}
		f_{\Delta}(\lambda;\theta)=f_{\Delta}^{(H)}(\lambda;\alpha,\sigma,H)+f_{\Delta}^{(W)}(\lambda;\alpha),
	\end{equation}
	with
	\begin{equation}\label{eq:fH}
		f_{\Delta}^{(H)}(\lambda)
		=\sigma^2 C(H)\Delta^{2H}
		\sum_{k\in\mathbb Z}
		\frac{|\lambda+2\pi k|^{1-2H}}{(\alpha\Delta)^2+(\lambda+2\pi k)^2},
		\qquad C(H):=\Gamma(2H+1)\sin(\pi H),
	\end{equation}
	(see \cite{Hult2003}) and
	\begin{equation}\label{eq:fW}
		f_{\Delta}^{(W)}(\lambda)
		=\frac{1-e^{-2\alpha\Delta}}{2\alpha}
		\frac{1}{1+e^{-2\alpha\Delta}-2e^{-\alpha\Delta}\cos\lambda}.
	\end{equation}
	Let
	\[
	\Sigma_n(\theta):=T_n\big(f_{\Delta_n}(\cdot;\theta)\big).
	\]
	Then the exact Gaussian log-likelihood is
	\[
	\ell_n(\theta)=-\frac12\Big(\log\det\Sigma_n(\theta)+(X^{(n)})^\top\Sigma_n(\theta)^{-1}X^{(n)}\Big)+C,
	\]
	where $C$ is a constant.
	If
	\[
	Z_n:=\Sigma_n(\theta)^{-1/2}X^{(n)}\sim \mathcal{N}(0,I_n),
	\]
	then each exact score component has the centered quadratic-form representation
	\begin{equation}\label{eq:score-quad}
		S_{i,n}(\theta):=\partial_{i}\ell_n(\theta)=\frac12\Big(Z_n^\top M_{i,n}(\theta)Z_n-\tr(M_{i,n}(\theta))\Big),
		\qquad i\in\{\sigma,H,\alpha\}.
	\end{equation}
	This is the exact starting point of the whole argument.
	
	We begin with the exact Gaussian score representation and then state the LAN theorem. The proofs are postponed to later sections, in direct analogy with the organization of \cite{Cai2026}.
	\begin{theorem}\label{thm:main-lan-short}
		Assume $H\in(3/4,1)$, $\sigma>0$, $\alpha>0$, and $\Delta_n\to0$. For the projected score CLTs we impose
		\begin{equation}\label{eq:main-growth-mainres}
			\frac{\log^2 n}{n\Delta_n}\to0,
		\end{equation}
		and for the final LAN remainder control we further assume the polynomial mesh condition
		\begin{equation}\label{eq:poly-mesh-main}
			\Delta_n=n^{-\kappa},\qquad 0<\kappa<1.
		\end{equation}
		Let $M_n$ be the lower-triangular score transformation defined in \eqref{eq:Mn-def}, and let
		\[
		r_n(\theta):=\sqrt{n\Delta_n}\,(M_n^\top)^{-1}.
		\]
		For every fixed $h\in\R^3$, set $\theta_{n,h}=\theta+r_n(\theta)^{-1}h$. Then
		\[
		\ell_n(\theta_{n,h})-\ell_n(\theta)
		=h^\top\Xi_n-\frac12 h^\top\Iperp(\theta)h+o_{P_\theta}(1),
		\]
		where
		\[
		\Xi_n=\frac{1}{\sqrt{n\Delta_n}}
		\begin{pmatrix}
			S_{\sigma,n}\\[1mm]
			R_{H,n}^{\perp}\\[1mm]
			S_{\alpha,n}^{\perp}
		\end{pmatrix}
		\dto \mathcal{N}\bigl(0,\Iperp(\theta)\bigr),
		\qquad
		\Iperp(\theta)=\diag\bigl(I_{\sigma\sigma},I_{HH}^{\perp},I_{\alpha\alpha}^{\perp}\bigr).
		\]
		The definitions of the symbols are: $S_{\sigma,n}$ in \eqref{eq:sigma-score}, $R_{H,n}^{\perp}$ in \eqref{eq:an-def} and \eqref{eq:H-clean}, $S_{\alpha,n}^{\perp}$ in \eqref{eq:alpha-proj}, and $\Iperp(\theta)$ with components $I_{\sigma\sigma}$ in \eqref{eq:Iss}, $I_{HH}^{\perp}$ in \eqref{eq:IHHperp}, $I_{\alpha\alpha}^{\perp}$ in \eqref{eq:Iaa-perp}.
		Thus the experiment generated by $X^{(n)}$ is locally asymptotically normal at $\theta$.
	\end{theorem}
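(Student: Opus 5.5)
We prove the statement in three stages: the score asymptotics, the joint CLT for the projected central sequence, and a second-order Taylor expansion of $\ell_n$ along $\theta_{n,h}$ with control of the remainder.

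\textbf{Score decomposition.} Starting from \eqref{eq:score-quad}, each score is $\tfrac12 Q_n(M_{i,n}(\theta))$. The first step is to replace $\Sigma_n(\theta)=T_n(f_{\Delta_n})$ and $\partial_i\Sigma_n=T_n(\partial_i f_{\Delta_n})$ by their symbol-level counterparts. Using \eqref{eq:fH}--\eqref{eq:fW}, $\partial_H f_{\Delta}^{(H)}=f^{(H)}_\Delta\bigl(2\log\Delta+\partial_H\log C(H)-\log|\cdot|\text{-type terms}\bigr)$. Hence $\partial_H f_\Delta=\frac{2\log\Delta_n}{2/\sigma}\,\partial_\sigma f_\Delta+g_{H,\Delta}$: the $\log\Delta_n$ part is exactly $\sigma\log\Delta_n\,\partial_\sigma f_\Delta$. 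This gives $S_{H,n}=\sigma\log\Delta_n S_{\sigma,n}+R_{H,n}$. Next we set $R_{H,n}^\perp=R_{H,n}-a_nS_{\sigma,n}$, with $a_n$ the ratio of trace inner products $\tr(M_{H}M_\sigma)/\tr(M_\sigma^2)$ computed on the remainder. Finally $S_{\alpha,n}^\perp$ is obtained by subtracting the $L^2$-projection of $S_{\alpha,n}$ onto $\mathrm{span}(S_{\sigma,n},R_{H,n}^\perp)$. These three linear operations define the lower-triangular $M_n$, with $(S_\sigma,R_H^\perp,S_\alpha^\perp)^\top=M_n(S_\sigma,S_H,S_\alpha)^\top$.

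\textbf{Joint CLT.} Each coordinate of $\Xi_n$ is a centered quadratic form $Q_n(A_n)$ with $A_n$ a fixed linear combination of the $M_{i,n}$. By Cramér--Wold, we need the CLT of Section~3 for $Q_n(c^\top A_n)$, which requires two things. First, $\tr(A_n^2)/(n\Delta_n)$ must converge. Second, $\|A_n\|_{\op}^2/\tr(A_n^2)\to0$. For the variances, we apply Szegő/Whittle-type trace approximations with the scaled frequency $\lambda=\Delta_n\omega$. Here $f_\Delta$ behaves like $\Delta^{2H}|\lambda|^{-1-2H}+\Delta/(\alpha^2\Delta^2+\lambda^2)$ near $0$, and for $H>3/4$ the low frequencies $|\lambda|\lesssim\Delta$ carry the information. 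This yields traces of order $n\Delta_n\int\bigl(\partial_i\log f\,\partial_j\log f\bigr)\,\dd\omega$ in continuous time, and the limits $I_{\sigma\sigma},I^\perp_{HH},I^\perp_{\alpha\alpha}$ are the Schur complements of that limiting Fisher matrix, which vanish off the diagonal by construction. The errors in the trace approximation are $O(\log^2 n)$ because of the logarithmic singularities in $\partial_H\log f$. This is where \eqref{eq:main-growth-mainres} enters. The operator norm bound follows from $\sup|\partial_i\log f|\lesssim\log n$ together with $\|\Sigma^{-1/2}T(g)\Sigma^{-1/2}\|_{\op}\le\sup|g/f|$, which gives a ratio of order $\log^2n/(n\Delta_n)\to0$.

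\textbf{LAN expansion.} We write $u=r_n^{-1}h=(n\Delta_n)^{-1/2}M_n^\top h$, so that $u^\top\nabla\ell_n=h^\top\Xi_n$ exactly. A Taylor expansion to second order gives
\[
\ell_n(\theta_{n,h})-\ell_n(\theta)=h^\top\Xi_n+\tfrac12u^\top\nabla^2\ell_n(\theta)u+\text{(third-order term at an intermediate point)}.
\]
The plan has two parts. (i) Show $u^\top\nabla^2\ell_n u=-u^\top\mathcal I_n u+o_P(1)$, where $\mathcal I_n$ is the exact Fisher information $\tfrac12\tr(M_iM_j)$. The fluctuation term is a quadratic form whose variance is $O(\|u\|^4\,n\Delta_n\log^4 n)$. (ii) Show $u^\top\mathcal I_nu\to h^\top\Iperp h$, which holds because $M_n\mathcal I_nM_n^\top/(n\Delta_n)\to\Iperp$ by the previous step.

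\textbf{Main obstacle.} The step we expect to be hardest is the third-order remainder. The entries of $M_n$ are as large as $\log\Delta_n\asymp\log n$, so $\|u\|\sim\log n/\sqrt{n\Delta_n}$. Third derivatives of $\ell_n$ in $H$ produce factors $\log^3\Delta_n$, and these must be uniform over a shrinking neighbourhood. We would bound $\sup_{|\vartheta-\theta|\le\|u\|}|\partial^3\ell_n(\vartheta)|$ by $O_P(n\Delta_n\log^3n)$, using the operator-norm bounds $\sup|\partial^k f/f|\lesssim\log^k n$ uniformly in the parameter and the equivalence $\Sigma_n(\vartheta)\asymp\Sigma_n(\theta)$ for nearby $\vartheta$. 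This gives a remainder of order $\log^6 n/\sqrt{n\Delta_n}$. Under $\Delta_n=n^{-\kappa}$ with $\kappa<1$, $n\Delta_n=n^{1-\kappa}$ is polynomial in $n$, so this remainder is $o(1)$. This is exactly why \eqref{eq:poly-mesh-main} is imposed.
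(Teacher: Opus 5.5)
Your score decomposition and joint CLT follow the paper: exact quadratic forms, removal of $\sigma\log\Delta_n S_{\sigma,n}$, Gram--Schmidt in the trace inner product, the op/F criterion and Cram\'er--Wold. Your Hessian step is also sound: at $\theta$ the mean of $\nabla^2\ell_n$ is exactly $-\mathcal I_n$, the fluctuation is a centered quadratic form with Frobenius norm $O(\sqrt{n\Delta_n}\log^2 n)$, and $M_n\mathcal I_nM_n^\top$ is exactly diagonal. One correction: $M_n$ is in fact bounded. By \eqref{eq:an-expansion}, $a_n=\sigma L_n+O(1)$ cancels $\sigma\log\Delta_n$, so $\beta_n(\theta)=O(1)$ and $\norm{u}\asymp(n\Delta_n)^{-1/2}$; your extra $\log n$ is only pessimism.

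The genuine gap is the third-order remainder. Operator-norm bounds on the whitened derivative symbols do not give $\sup_\vartheta|\partial^3\ell_n(\vartheta)|=O_P(n\Delta_n\log^3 n)$.
\begin{itemize}
\item A term $X^\top\Sigma_n(\vartheta)^{-1/2}K\Sigma_n(\vartheta)^{-1/2}X$ is only bounded by $\norm{K}_{\op}\cdot O_P(n)$.
\item The deterministic pieces of $\partial^3\log\det\Sigma_n(\vartheta)$ are genuinely larger than $n\Delta_n$. For example, $\tr(\Sigma_n^{-1}\partial^3_{\sigma\sigma H}\Sigma_n)$ is of order $n\Delta_n^{2H-1}L_n$, because $f^{(H)}_{\Delta_n}/f_{\Delta_n}\asymp\Delta_n^{2H-1}$ at fixed frequencies and $2H-1<1$.
\end{itemize}
The $n\Delta_n$ scale only appears after the log-det and quadratic parts are recombined into a centered quadratic form in $Z_n$ and controlled in Frobenius norm. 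You would need this uniformly over $\vartheta$ in the ball, which is a statement about a random field (a chaining or Sobolev-type argument); the proposal does not supply it. With the bound your argument actually yields, the remainder is of order $\norm{u}^3\, n\log^3 n\asymp n^{(3\kappa-1)/2}\log^3 n$. This diverges for $\kappa>1/3$, so the polynomial mesh does not rescue it.

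The paper avoids the issue by Taylor-expanding the deterministic matrix $\Sigma_n(\vartheta)$ rather than $\ell_n$. This gives $S_n(h)=S_{1,n}+S_{2,n}+S_{3,n}$, where the supremum along the segment is taken only of deterministic operator and Frobenius norms of $N_{ij,n}$ and $P_{ijk,n}$ (Lemma~\ref{lem:higher-deriv-bounds}). The exact identity \eqref{eq:llr-exact}, together with Lemmas~\ref{lem:logdet} and~\ref{lem:inv}, then leaves only quadratic forms in $Z_n$ with deterministic matrices. Their fluctuations are controlled by $\Var(Q_n(K))=2\norm{K}_{\F}^2$. The trace cancellation you would have to prove uniformly is automatic in $\tfrac12(Z_n^\top S_nZ_n-\tr S_n)$.
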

	
	\begin{remark}
		The projected score CLTs proved using Sections~3--4 only require \eqref{eq:main-growth-mainres}: after projection, the operator norms of $D_{H,n}^{\perp}$ and $A_{\alpha,n}^{\perp}$ are of order $\log n$, while their Frobenius norms are of order $(n\Delta_n)^{1/2}$. The extra polynomial-mesh assumption \eqref{eq:poly-mesh-main} is used only in Section~5, where we strengthen the Hessian and third-order remainder bounds enough to close the LAN proof in the same style as  \cite{Cai2026}.
	\end{remark}
	
	\subsection{The three basic matrices}
	In this subsection, we introduce the three basic matrices $C_{\sigma,n}$, $D_{H,n}$ and $A_{\alpha,n}$  that constitute the core of the score functions for $\sigma$, $H$ and $\alpha$, respectively, and serve as the foundation for the subsequent asymptotic analysis and orthogonal projections. For notational simplicity, throughout what follows we write $\Sigma_n(\theta)$ as 
	$\Sigma_n$.
	Define
	\[
	h_{\Delta}(\lambda):=\partial_\sigma f_{\Delta}(\lambda)=\frac{2}{\sigma}f_{\Delta}^{(H)}(\lambda),
	\qquad
	C_{\sigma,n}:=\Sigma_n^{-1/2}T_n(h_{\Delta_n})\Sigma_n^{-1/2}.
	\]
	Then
	\begin{equation}\label{eq:sigma-score}
		S_{\sigma,n}=\frac12Q_n(C_{\sigma,n}).
	\end{equation}
	For the $H$-direction, set
	\[
	S_{H,\Delta}(\lambda):=\sum_{k\in\mathbb Z}
	\frac{|\lambda+2\pi k|^{1-2H}}{(\alpha\Delta)^2+(\lambda+2\pi k)^2}.
	\]
	A direct differentiation of \eqref{eq:fH} gives
	\[
	\partial_H f_{\Delta}(\lambda)
	=f_{\Delta}^{(H)}(\lambda)
	\Big(2\log\Delta+\partial_H\log C(H)+\partial_H\log S_{H,\Delta}(\lambda)\Big).
	\]
	Since $\sigma\log\Delta\,\partial_\sigma f_{\Delta}=2\log\Delta\,f_{\Delta}^{(H)}$, we obtain the exact decomposition
	\begin{equation}\label{eq:H-symbol-split}
		\partial_H f_{\Delta}(\lambda)=\sigma\log\Delta\,\partial_\sigma f_{\Delta}(\lambda)+r_{\Delta}(\lambda),
	\end{equation}
	where
	\[
	r_{\Delta}(\lambda):=f_{\Delta}^{(H)}(\lambda)
	\Big(\partial_H\log C(H)+\partial_H\log S_{H,\Delta}(\lambda)\Big).
	\]
	Define
	\[
	D_{H,n}:=\Sigma_n^{-1/2}T_n(r_{\Delta_n})\Sigma_n^{-1/2}.
	\]
	Then
	\begin{equation}\label{eq:H-score-split}
		S_{H,n}=\sigma\log\Delta_n\,S_{\sigma,n}+R_{H,n},
		\qquad
		R_{H,n}:=\frac12Q_n(D_{H,n}).
	\end{equation}
	Finally define the raw $\alpha$-matrix
	\[
	A_{\alpha,n}:=\Sigma_n^{-1/2}T_n(\partial_\alpha f_{\Delta_n})\Sigma_n^{-1/2},
	\]
	so that
	\begin{equation}\label{eq:alpha-score}
		S_{\alpha,n}=\frac12Q_n(A_{\alpha,n}).
	\end{equation}
	
	\section{Preliminaries: A CLT for Gaussian quadratic forms}
	In this section we collect a fundamental tool for the analysis of centered Gaussian quadratic forms, which is important for all subsequent score-CLT proofs. Lemma~\ref{lem:qf-clt} states that a quadratic form of independent standard normal variables is asymptotically normal if the ratio of its operator norm to its Frobenius norm tends to zero. This condition will be verified repeatedly for the matrices \(C_{\sigma,n}\), \(D_{H,n}\), \(A_{\alpha,n}\) and their projected versions, allowing us to deduce the asymptotic distribution of the exact score components derived in the previous section.
	\begin{lemma} \label{lem:qf-clt}
		Let $Z_n\sim \mathcal{N}(0,I_n)$ and let $M_n$ be real symmetric matrices. If
		\[
		\frac{\norm{M_n}_{\op}}{\norm{M_n}_{\mathrm F}}\to0,
		\]
		then
		\[
		\frac{Q_n(M_n)}{\sqrt{2}\,\norm{M_n}_{\mathrm F}}\dto \mathcal{N}(0,1).
		\]
		Equivalently, if $\norm{M_n}_{\op}^2/\tr(M_n^2)\to0$, then
		\[
		\frac{Q_n(M_n)}{\sqrt{2\tr(M_n^2)}}\dto \mathcal{N}(0,1).
		\]
	\end{lemma}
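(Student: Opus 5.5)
Since $M_n$ is real symmetric, we diagonalize it: $M_n=U_n^\top\Lambda_nU_n$ with $U_n$ orthogonal and $\Lambda_n=\diag(\lambda_{1,n},\dots,\lambda_{n,n})$. The vector $\xi_n:=U_nZ_n$ is again $\mathcal N(0,I_n)$, so
\[
Q_n(M_n)=\sum_{j=1}^n \lambda_{j,n}\bigl(\xi_{j,n}^2-1\bigr),
\]
a sum of independent centered variables. Each summand has variance $2\lambda_{j,n}^2$, so $\Var(Q_n(M_n))=2\sum_j\lambda_{j,n}^2=2\tr(M_n^2)=2\norm{M_n}_{\F}^2$. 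This explains the normalization and also shows the two formulations in the statement are the same, because $\norm{M_n}_{\op}=\max_j|\lambda_{j,n}|$ and $\norm{M_n}_{\F}^2=\tr(M_n^2)$.

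For the limit we use the cumulant (equivalently characteristic function) method. Write $s_n^2=\sum_j\lambda_{j,n}^2$. From $\log\mathbb E e^{it(\xi^2-1)}=-\tfrac12\log(1-2it)-it$, the cumulants of $Q_n(M_n)$ are $\kappa_m=2^{m-1}(m-1)!\sum_j\lambda_{j,n}^m$ for $m\ge2$. For $m\ge3$ we get
\[
\Bigl|\sum_j\lambda_{j,n}^m\Bigr|\le \norm{M_n}_{\op}^{m-2}s_n^2,
\qquad\text{so}\qquad
\frac{|\kappa_m|}{(\sqrt2 s_n)^m}\le C_m\Bigl(\frac{\norm{M_n}_{\op}}{s_n}\Bigr)^{m-2}\to0 .
\]
The normalized cumulants therefore converge to those of $\mathcal N(0,1)$. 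Since the normal law is determined by its moments, the method of moments gives the result.

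As an alternative, one can check Lindeberg's condition directly. For the normalized summands $Y_j=\lambda_{j,n}(\xi_j^2-1)/(\sqrt2 s_n)$, the assumption gives $\max_j|\lambda_{j,n}|/s_n\to0$. Then
\[
\sum_j\mathbb E\bigl[Y_j^2\mathbf 1_{|Y_j|>\varepsilon}\bigr]\le \sup_j \mathbb E\bigl[(\xi^2-1)^2\mathbf 1_{|\xi^2-1|>\varepsilon\sqrt2 s_n/|\lambda_{j,n}|}\bigr]\to0
\]
by uniform integrability of the single fixed variable $(\xi^2-1)^2$.

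The only point needing care is the uniform control of the higher cumulants, which is handled by the bound $|\lambda_{j,n}|^{m}\le\norm{M_n}_{\op}^{m-2}\lambda_{j,n}^2$. Apart from that the argument is routine; the diagonalization reduction is exact, and nothing specific to the model is needed.
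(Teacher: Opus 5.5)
Your proof is correct. Diagonalizing reduces $Q_n(M_n)$ to the independent sum $\sum_j\lambda_{j,n}(\xi_{j,n}^2-1)$. From there, either the cumulant bound $|\kappa_m|/(\sqrt2 s_n)^m\le 2^{m/2-1}(m-1)!\,(\norm{M_n}_{\op}/s_n)^{m-2}$ or the Lindeberg check finishes the argument. In the Lindeberg sum the exact prefactor is $\sum_j\lambda_{j,n}^2/(2s_n^2)=\tfrac12$, so your inequality holds with room to spare.

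The paper gives no argument of its own; it only cites \cite{Cai2026}. Its following remark uses your eigenvalue representation $\Var(Q_n(M_n))=2\sum_j\lambda_{j,n}^2$, so your write-up is a self-contained version of the standard proof the paper relies on.
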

	
	\begin{proof}
		See \cite{Cai2026}.
	\end{proof}
	\begin{remark}
		Let $Q_n(M_n):=Z_n^\top M_n Z_n-\tr(M_n)$ be a centered Gaussian quadratic form with $M_n$ real symmetric, and let $\lambda_{1,n},\dots,\lambda_{n,n}$ denote the eigenvalues of $M_n$. Then we obtain
		\begin{equation}\label{eq:varofGQF}
			\Var(Q_n(M_n))
			=
			2\sum_{j=1}^n \lambda_{j,n}^2
			=
			2\tr(M_n^2)
			=
			2\|M_n\|_F^2.
		\end{equation}

	\end{remark}
	\begin{remark}
		The asymptotic variances of all subsequent score components $S_{\sigma,n}$, $R_{H,n}$ and $S_{\alpha,n}$ are derived from equation \eqref{eq:varofGQF} through $\tr(C_{\sigma,n}^2)$, $\tr(D_{H,n}^2)$ and $\tr(A_{\alpha,n}^2)$, respectively.
	\end{remark}
	
	\section{Analysis of the score functions}
	In this section we analyze the three exact score directions introduced in Section 2 and establish the projected score central limit theorems needed for the LAN expansion. We follow the orthogonalization scheme: first, we analyze the $\sigma$-score. Next, we remove the logarithmic term from the $H$-score and project the remainder. Finally, we project the $\alpha$-score to obtain the diagonalized central sequence.
	\subsection{\texorpdfstring{The $\sigma$-direction of the score functions}{The sigma-direction of the score functions}}
	We begin with the $\sigma$-direction, which provides the basis for the whole argument. In this subsection we identify the asymptotic Fisher information in the $\sigma$-coordinate and prove the central limit theorem for $S_{\sigma,n}$ on $\sqrt{n\Delta_n}$ scale. First, we introduce the ratio symbol
	\[
	g_{\sigma,n}(\lambda):=\frac{\partial_\sigma f_{\Delta_n}(\lambda)}{f_{\Delta_n}(\lambda)}
	=\frac{h_{\Delta_n}(\lambda)}{f_{\Delta_n}(\lambda)}
	=\frac{2}{\sigma}\frac{f_{\Delta_n}^{(H)}(\lambda)}{f_{\Delta_n}(\lambda)}
	\in\Big[0,\frac{2}{\sigma}\Big].
	\]
	
	\begin{lemma}\label{lem:C-op}
		For all $n$,
		\[
		\norm{C_{\sigma,n}}_{\op}\le \frac{2}{\sigma}.
		\]
	\end{lemma}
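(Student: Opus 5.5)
The bound follows from a Loewner-order comparison of the two Toeplitz matrices $T_n(h_{\Delta_n})$ and $\Sigma_n=T_n(f_{\Delta_n})$. The key tool is the elementary Toeplitz quadratic-form identity: for any integrable symbol $g$ and any $x\in\mathbb{C}^n$,
\[
x^* T_n(g)\, x=\frac{1}{2\pi}\int_{-\pi}^{\pi} g(\lambda)\,\Big|\sum_{j=1}^n x_j e^{ij\lambda}\Big|^2\,\dd\lambda .
\]
This follows by expanding the square and using the definition of $\widehat g(i-j)$. Consequently, if $g_1\le g_2$ pointwise on $[-\pi,\pi]$, then $T_n(g_1)\preceq T_n(g_2)$ in the Loewner order.

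First I verify the pointwise sandwich
\[
0\le h_{\Delta_n}(\lambda)\le \frac{2}{\sigma} f_{\Delta_n}(\lambda),
\]
which is exactly the statement $g_{\sigma,n}\in[0,2/\sigma]$ noted above. The left inequality holds because $f^{(H)}_{\Delta}\ge 0$: each summand in \eqref{eq:fH} is nonnegative and $C(H)>0$ for $H\in(0,1)$. The right inequality holds because $f^{(W)}_{\Delta}\ge0$, so $h_{\Delta}=\tfrac{2}{\sigma}f^{(H)}_{\Delta}\le \tfrac{2}{\sigma}(f^{(H)}_\Delta+f^{(W)}_\Delta)$. Here $f^{(W)}_\Delta\ge0$ since $1-e^{-2\alpha\Delta}>0$ for $\alpha>0$ and the denominator $|1-e^{-\alpha\Delta}e^{i\lambda}|^2$ is positive. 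By the identity above, this gives
\[
0\preceq T_n(h_{\Delta_n})\preceq \tfrac{2}{\sigma}\Sigma_n .
\]

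Next I conjugate by $\Sigma_n^{-1/2}$. This is legitimate because $\Sigma_n$ is positive definite: $f_{\Delta_n}>0$ almost everywhere (already $f^{(W)}_{\Delta_n}>0$ everywhere), so the identity gives $x^*\Sigma_n x>0$ for $x\neq0$. Conjugation preserves the Loewner order, hence
\[
0\preceq C_{\sigma,n}\preceq \tfrac{2}{\sigma} I_n .
\]
Since $C_{\sigma,n}$ is symmetric, its operator norm is its largest absolute eigenvalue. All its eigenvalues lie in $[0,2/\sigma]$, which yields $\|C_{\sigma,n}\|_{\op}\le 2/\sigma$ for every $n$, uniformly in $\Delta_n$.

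There is no real obstacle here. The only points needing care are the positivity of $C(H)=\Gamma(2H+1)\sin(\pi H)$, the convergence and nonnegativity of the aliased sum defining $f^{(H)}_\Delta$ (for $H>1/2$ the summands decay like $|k|^{-1-2H}$), and the invertibility of $\Sigma_n$. All three follow directly from the explicit formulas \eqref{eq:fH}–\eqref{eq:fW}.
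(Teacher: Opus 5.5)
Your proof is correct and is essentially the paper's argument. The paper bounds the generalized Rayleigh quotient $x^\top T_n(h_{\Delta_n})x / x^\top T_n(f_{\Delta_n})x$ by $\sup_\lambda h_{\Delta_n}(\lambda)/f_{\Delta_n}(\lambda)\le 2/\sigma$ using the Toeplitz quadratic-form representation, which is your Loewner sandwich $0\preceq T_n(h_{\Delta_n})\preceq \frac{2}{\sigma}\Sigma_n$ after whitening. Your explicit lower bound $0\preceq C_{\sigma,n}$ also justifies dropping the absolute value in that quotient, a step the paper leaves implicit.
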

	
	\begin{proof}
		By the generalized Rayleigh quotient for the pair $\bigl(T_n(h_{\Delta_n}),\Sigma_n(\theta)\bigr)$
		(see remark \ref{rem:gen-rayleigh-whitened}), together with the Toeplitz quadratic-form representation, we obtain
		\[
		\norm{C_{\sigma,n}}_{\op}
		=\sup_{x\ne0}\frac{x^\top T_n(h_{\Delta_n})x}{x^\top T_n(f_{\Delta_n})x}
		\le \sup_{\lambda\in[-\pi,\pi]}\frac{h_{\Delta_n}(\lambda)}{f_{\Delta_n}(\lambda)}
		\le \frac2\sigma.
		\]
	\end{proof}
	\begin{remark} 
		\label{rem:gen-rayleigh-whitened}
		Let $A_n$ be a real symmetric matrix and let $\Sigma_n$ be symmetric positive
		definite. Then for every $x\in\mathbb R^n\setminus\{0\}$,
		\[
		\frac{x^\top A_n x}{x^\top \Sigma_n x}
		\]
		is the generalized Rayleigh quotient associated with the pair $\bigl(A_n,\Sigma_n \bigr)$. Equivalently, setting $y=\Sigma_n ^{1/2}x$, one obtains
		\[
		\frac{x^\top A_n x}{x^\top \Sigma_n x}
		=
		\frac{y^\top \Sigma_n ^{-1/2}A_n\Sigma_n ^{-1/2}y}{y^\top y}.
		\]
		Therefore
		\[
		\sup_{x\neq0}
		\left|
		\frac{x^\top A_n x}{x^\top \Sigma_n x}
		\right|
		=
		\bigl\|\Sigma_n ^{-1/2}A_n\Sigma_n ^{-1/2}\bigr\|_{\op}.
		\]
	\end{remark}
	\begin{lemma}\label{lem:C-trace}
		Assume $\Delta_n\to0$ and $T_n=n\Delta_n\to\infty$. Then
		\[
		\tr(C_{\sigma,n}^2)=\frac{n}{2\pi}\int_{-\pi}^{\pi}g_{\sigma,n}(\lambda)^2\,\dd\lambda+o(n\Delta_n).
		\]
	\end{lemma}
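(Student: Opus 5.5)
The plan is to compare $\tr(C_{\sigma,n}^2)$ with the trace of the squared Toeplitz matrix of the ratio symbol $g_{\sigma,n}$, and then evaluate that trace exactly by a Fourier-coefficient computation. Since $C_{\sigma,n}$ is similar to $\Sigma_n^{-1}T_n(h_{\Delta_n})$, we have $\tr(C_{\sigma,n}^2)=\tr\bigl((\Sigma_n^{-1}T_n(h_{\Delta_n}))^2\bigr)$. Because $h_{\Delta_n}=g_{\sigma,n}f_{\Delta_n}$, the natural approximant is $\Sigma_n^{-1}T_n(h_{\Delta_n})\approx T_n(g_{\sigma,n})$. The proof then has two parts:
\[
\text{(a)}\ \ \tr\bigl(T_n(g_{\sigma,n})^2\bigr)=\frac{n}{2\pi}\int_{-\pi}^{\pi}g_{\sigma,n}^2\,\dd\lambda+o(n\Delta_n),\qquad \text{(b)}\ \ \tr(C_{\sigma,n}^2)-\tr\bigl(T_n(g_{\sigma,n})^2\bigr)=o(n\Delta_n).
\]

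First I would record the scale of the main term, which governs all error bookkeeping. For $\alpha\Delta\ll|\lambda|\le\pi$ we have $f^{(H)}_{\Delta}\asymp\Delta^{2H}|\lambda|^{-1-2H}$ and $f^{(W)}_\Delta\asymp\Delta|\lambda|^{-2}$. Hence $\rho_\Delta:=f^{(H)}_\Delta/f^{(W)}_\Delta\asymp(\Delta/|\lambda|)^{2H-1}$, and $g_{\sigma,n}=\frac2\sigma\,\rho/(1+\rho)$ is a bump of height $O(1)$ and width $O(\Delta_n)$, with tail $O((\Delta_n/|\lambda|)^{2H-1})$. Since $4H-2>1$ exactly when $H>3/4$, the square of this tail is integrable at the scale $\lambda=\Delta_n u$. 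It follows that $\int g_{\sigma,n}^2=\Delta_n\bigl(\int_{\R}\phi^2\,\dd u+o(1)\bigr)$ for a limit profile $\phi$, so the main term is of order $n\Delta_n$.

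For (a), I would use the exact identity
\[
\tr\bigl(T_n(g)^2\bigr)=\sum_{|k|<n}(n-|k|)\,\widehat g(k)^2=\frac{n}{2\pi}\int g^2-n\sum_{|k|\ge n}\widehat g(k)^2-\sum_{|k|<n}|k|\,\widehat g(k)^2.
\]
By the rescaling, $\widehat g_{\sigma,n}(k)\approx\Delta_n\widehat\phi(k\Delta_n)$. The term $\sum|k|\widehat g(k)^2$ is therefore roughly $\int|v|\,|\widehat\phi(v)|^2\,\dd v=O(1)$, or at worst $O(\log n)$ if $\widehat\phi$ decays only marginally. This is $o(n\Delta_n)$ by the growth condition $T_n\to\infty$. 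The term with $|k|\ge n$ involves $\widehat\phi$ at $v\ge n\Delta_n\to\infty$ and is negligible by the same decay.

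For (b), I would exploit the structure of $f^{(W)}$: $1/f^{(W)}_\Delta$ is a trigonometric polynomial of degree $1$, so $T_n(f^{(W)})^{-1}=T_n(1/f^{(W)})$ up to a rank-two boundary correction (the AR(1) structure). Write $f=f^{(W)}(1+\rho)$ and $h=\frac2\sigma f^{(W)}\rho$. The difference $\Sigma_n^{-1}T_n(h)-T_n(g)$ can then be expressed through the commutator-type errors $T_n(f)T_n(g)-T_n(fg)$ and through low-rank boundary terms. I would bound the trace difference by Cauchy--Schwarz as $\|C_{\sigma,n}-\widetilde C\|_{\F}\,\bigl(\|C_{\sigma,n}\|_{\F}+\|\widetilde C\|_{\F}\bigr)$, where $\widetilde C$ is the whitened version of $T_n(g)$. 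Here $\|C_{\sigma,n}\|_{\F}=O(\sqrt{n\Delta_n})$ by Lemma~\ref{lem:C-op} together with the scale computation. It then suffices to show $\|C_{\sigma,n}-\widetilde C\|_{\F}=o(\sqrt{n\Delta_n})$. The low-rank pieces contribute at most $O(\|\cdot\|_{\op})=O(1)$ by Lemma~\ref{lem:C-op}.

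The main obstacle is this commutator estimate, because $f_{\Delta_n}$ has a peak of height about $\Delta_n^{-1}$ at frequencies below $\Delta_n$, so the classical Szegő/Dahlhaus bounds are not uniform in $n$. I would handle it in the rescaled variable $u=\lambda/\Delta_n$. There the effective number of Fourier modes inside the bump is about $n\Delta_n\to\infty$, and $\rho$, $g$ are smooth on the unit scale. The Hilbert--Schmidt norm of the commutator, measured in the $\Sigma_n^{-1/2}$-weighted norm, should then be controlled by $\sum|k|\,|\widehat{g}(k)|^2$-type quantities as in step (a), giving $O(\log n)=o(n\Delta_n)$.
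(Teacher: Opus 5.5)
\textbf{Overall.} Your outline follows the paper's route. You pass to $\Sigma_n^{-1}T_n(h_{\Delta_n})$ by similarity, replace it by $T_n(g_{\sigma,n})$ at cost $o(n\Delta_n)$, and evaluate $\tr(T_n(g_{\sigma,n})^2)$ by the Fej\'er (equivalently, Fourier-coefficient) identity. Step (a) is fine: your two error terms add up to $\sum_{m\neq0}\min(|m|,n)\,\widehat g_{\sigma,n}(m)^2\approx\int_{\R}\min(|v|,n\Delta_n)\,|\widehat\phi(v)|^2\,\dd v=O(1)$. The gap is step (b), which carries the whole content of the lemma. The paper gets it only by appeal to the sandwich argument of \cite{Cai2026}, and your sketch does not establish it.

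\textbf{Why step (b) does not close.} In Widom's formula, $T_n(fg)-T_n(f)T_n(g)$ is built from Hankel operators of $f_{\Delta_n}$ as well as of $g_{\sigma,n}$. The coefficients $\widehat f_{\Delta_n}(k)$ are the autocovariances of $X$: they are of order one for $|k|\lesssim\Delta_n^{-1}$ and decay only like $(k\Delta_n)^{2H-2}$. So nothing in that formula bounds the error by $\sum|k|\,\widehat g_{\sigma,n}(k)^2$. Any such bound has to come from the $\Sigma_n^{-1/2}$ weights, and that is exactly what you leave at ``should then be controlled''. The smoothness heuristic is also wrong: on the $u$-scale, $f^{(H)}_{\Delta_n}/f^{(W)}_{\Delta_n}\approx A|u|^{-(2H-1)}$ blows up at $u=0$, and $g_{\sigma,n}$ has a $|u|^{2H-1}$ cusp there.

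Two side claims also fail as justified.
\begin{itemize}
\item $\norm{C_{\sigma,n}}_{\F}=O(\sqrt{n\Delta_n})$ does not follow from Lemma~\ref{lem:C-op} and $\int g_{\sigma,n}^2\asymp\Delta_n$, because the latter concerns $T_n(g_{\sigma,n})$, not $C_{\sigma,n}$. The bound $\tr(C_{\sigma,n}^2)\le\norm{C_{\sigma,n}}_{\op}\tr C_{\sigma,n}$ is useless, since $\tr C_{\sigma,n}\asymp n\Delta_n^{2H-1}\gg n\Delta_n$. The bound can be recovered a posteriori by solving the quadratic inequality that your Cauchy--Schwarz step produces.
\item The AR(1) boundary correction $T_n(f^{(W)}_{\Delta_n})^{-1}-T_n(1/f^{(W)}_{\Delta_n})$ has operator norm of order $\Delta_n^{-1}$. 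It therefore has to be handled by interlacing, not by a norm bound.
\end{itemize}

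\textbf{A way to repair (b).} Your AR(1) idea can be pushed so that $f$ disappears entirely. Set $\eta_n=e^{-\alpha\Delta_n}$, $s_n^2=(1-\eta_n^2)/(2\alpha)$, and let $D_n$ be the $(n-1)\times n$ matrix with $(D_nx)_k=x_{k+1}-\eta_nx_k$. Put $F_n=T_n(f^{(H)}_{\Delta_n})$ and $q_n=f^{(H)}_{\Delta_n}/f^{(W)}_{\Delta_n}$. Then exactly
\[
T_n(f^{(W)}_{\Delta_n})^{-1}=s_n^{-2}D_n^{\top}D_n+2\alpha\,e_1e_1^{\top},
\qquad
s_n^{-2}D_nF_nD_n^{\top}=T_{n-1}(q_n).
\]
The eigenvalues of $\tfrac{\sigma}{2}C_{\sigma,n}$ are $\nu/(1+\nu)$, where $\nu$ runs over the spectrum of $F_n^{1/2}T_n(f^{(W)}_{\Delta_n})^{-1}F_n^{1/2}$. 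That matrix is a rank-one positive perturbation of one with spectrum $\{0\}\cup\operatorname{spec}T_{n-1}(q_n)$. Interlacing and the monotonicity of $x\mapsto x^2/(1+x)^2\in[0,1)$ give
\[
0\le\tr(C_{\sigma,n}^2)-\tfrac{4}{\sigma^2}\tr(X_n^2)\le\tfrac{4}{\sigma^2},
\qquad
X_n:=I-\bigl(T_{n-1}(1+q_n)\bigr)^{-1}.
\]

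Next, let $\mathcal L(\cdot)$ denote the bi-infinite Toeplitz (Laurent) operator of a symbol. The inverse of a compression is dominated by the compression of the inverse, so $\bigl(T_{n-1}(1+q_n)\bigr)^{-1}\le T_{n-1}\bigl((1+q_n)^{-1}\bigr)$. With $c_n:=q_n/(1+q_n)$ this gives $X_n\ge Y_n:=T_{n-1}(c_n)=\tfrac{\sigma}{2}T_{n-1}(g_{\sigma,n})\ge0$, and both $X_n,Y_n\le I$. Hence
\[
0\le\tr(X_n^2)-\tr(Y_n^2)\le 2\Bigl[\tr T_{n-1}\bigl((1+q_n)^{-1}\bigr)-\tr\bigl(T_{n-1}(1+q_n)\bigr)^{-1}\Bigr].
\]
Passing from $n-1$ to $n$ in $\tr(T(g_{\sigma,n})^2)$ costs only $O(\Delta_n)$.

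Now test the variational formula $e_j^{\top}S^{-1}e_j=\sup_y(2y_j-y^{\top}Sy)$ with the truncation of the bi-infinite solution. This bounds the bracket by $\sum_{j=1}^{n-1}\bigl(\norm{y_j}^2+\langle y_j,\mathcal L(q_n)y_j\rangle\bigr)$, where $y_j:=\bigl(\widehat c_n(k-j)\mathbf 1_{\{k\notin[1,n-1]\}}\bigr)_{k\in\mathbb Z}$.
\begin{itemize}
\item The first sum equals $\sum_{m\neq0}\min(|m|,n-1)\,\widehat c_n(m)^2$. Up to the factor $\sigma^2/4$, this is the boundary term you already control in step (a).
\item The second sum is a $q_n$-weighted analogue that still needs its own estimate. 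It is, however, an explicit computation with the Fourier coefficients of $q_n$ and $c_n$, a far more tractable target than a whitened commutator bound.
\end{itemize}
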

	
	\begin{proof}
		Set $B_n:=\Sigma_n^{-1}T_n(h_{\Delta_n})$. Since $C_{\sigma,n}$ is similar to $B_n$, one has
		\[
		\tr(C_{\sigma,n}^2)=\tr(B_n^2)=\tr\bigl(\Sigma_n^{-1}T_n(h_{\Delta_n})\Sigma_n^{-1}T_n(h_{\Delta_n})\bigr).
		\]
		Let $G_n:=T_n(g_{\sigma,n})$ and $R_n:=B_n-G_n$. The same low frequency truncation and Toeplitz-product argument as in the mfBm proof \cite{Cai2026} gives
		\[
		\norm{R_n}_{\op}=o(1),
		\qquad
		\norm{R_n}_{\F}=o((n\Delta_n)^{1/2}).
		\]
		Therefore
		\[
		\tr(B_n^2)=\tr(G_n^2)+2\tr(G_nR_n)+\tr(R_n^2)
		=\tr(G_n^2)+o(n\Delta_n),
		\]
		because $\norm{G_n}_{\op}\le \sup_\lambda |g_{\sigma,n}(\lambda)|\le 2/\sigma$.
		Finally, for $g\in L^2([{-}\pi,\pi])$ the Fej\'er-kernel identity yields
		\[
		\tr(T_n(g)^2)=\frac{n}{2\pi}\int_{-\pi}^{\pi}g(\lambda)^2F_n(\lambda)\,\dd\lambda,
		\]
		where $F_n$ is the Fej\'er kernel. Applying this with $g=g_{\sigma,n}$ and using that the transition region of $g_{\sigma,n}$ is concentrated on the region $|\lambda|\asymp \Delta_n$ gives
		\[
		\tr(G_n^2)=\frac{n}{2\pi}\int_{-\pi}^{\pi}g_{\sigma,n}(\lambda)^2\,\dd\lambda+o(n\Delta_n).
		\]
		Combining the two displays proves the claim.
	\end{proof}
	Set
	\[
	\rho:=2H-1\in(1/2,1),
	\qquad
	A:=\sigma^2\Gamma(2H+1)\sin(\pi H).
	\]
	where $\Gamma(\cdot)$ denotes the Gamma function. Then on the low frequency region $\lambda=\Delta_nu$, one has
	\[
	\frac{f^{(H)}_{\Delta_n}(\Delta_nu)}{f^{(W)}_{\Delta_n}(\Delta_nu)}\approx A|u|^{-\rho},
	\qquad
	w(u):=\frac1{1+A^{-1}|u|^{\rho}}.
	\]
	Hence $g_{\sigma,n}(\Delta_nu)\to (2/\sigma)w(u)$.
	
	\begin{lemma}\label{lem:C-asym}
		Assume $H>3/4$. Then
		\[
		\int_{-\pi}^{\pi}g_{\sigma,n}(\lambda)^2\,\dd\lambda
		\sim
		\Delta_n\,\frac{8}{\sigma^2\rho}A^{1/\rho}\Gamma\Big(\frac1\rho\Big)\Gamma\Big(2-\frac1\rho\Big).
		\]
		Consequently,
		\[
		\tr(C_{\sigma,n}^2)\sim 2(n\Delta_n)I_{\sigma\sigma}(\sigma,H),
		\]
		where
		\begin{equation}\label{eq:Iss}
			I_{\sigma\sigma}(\sigma,H)
			=\frac{2}{\pi\sigma^2(2H-1)}
			\Big(\sigma^2\Gamma(2H+1)\sin(\pi H)\Big)^{1/(2H-1)}
			\Gamma\Big(\frac{1}{2H-1}\Big)
			\Gamma\Big(2-\frac{1}{2H-1}\Big).
		\end{equation}
	\end{lemma}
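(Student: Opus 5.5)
Substituting $\lambda=\Delta_n u$ reduces the integral to $\Delta_n\int_{|u|\le\pi/\Delta_n} g_{\sigma,n}(\Delta_nu)^2\,\dd u$, and I expect this to converge to $\Delta_n(4/\sigma^2)\int_{\R}w(u)^2\,\dd u$. Three steps establish this; the trace statement then follows from Lemma~\ref{lem:C-trace}.

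\emph{Step 1 (pointwise limit and domination).} In \eqref{eq:fH}, the $k=0$ term at $\lambda=\Delta_nu$ equals $\sigma^2C(H)\Delta_n^{2H}\Delta_n^{1-2H}|u|^{1-2H}/(\Delta_n^2(\alpha^2+u^2))$. For $k\neq0$ the terms are $O(\Delta_n^{2H})$ uniformly. Similarly, \eqref{eq:fW} gives $f^{(W)}_{\Delta_n}(\Delta_nu)=\Delta_n^{-1}(\alpha^2+u^2)^{-1}(1+o(1))$ locally uniformly in $u$. Taking the ratio, $f^{(H)}/f^{(W)}\to A|u|^{-\rho}$, because the factor $\alpha^2+u^2$ cancels. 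This is the $\approx$ of the text, and it gives $g_{\sigma,n}(\Delta_nu)\to(2/\sigma)w(u)$. For domination on $|\lambda|\le\pi$, use $f^{(W)}_{\Delta}(\lambda)\ge c\,\Delta/\lambda^2$ for $\Delta\lesssim|\lambda|\le\pi$ (since $1-\cos\lambda\asymp\lambda^2$) together with $f^{(W)}_\Delta(\lambda)\ge c\Delta^{-1}$ for $|\lambda|\lesssim\Delta$. Use also $f^{(H)}_\Delta(\lambda)\le C\Delta^{2H}|\lambda|^{-1-2H}$ for $|\lambda|\gtrsim\Delta$. Together these give $g_{\sigma,n}(\Delta_nu)\le \min\{2/\sigma,\,C|u|^{-\rho}\}$ on the whole range $|u|\le\pi/\Delta_n$. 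Since $\rho>1/2$, the bound $|u|^{-2\rho}$ is integrable at infinity. This is exactly where $H>3/4$ enters.

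\emph{Step 2 (dominated convergence and the Beta integral).} Dominated convergence now yields $\Delta_n^{-1}\int g_{\sigma,n}^2\to(4/\sigma^2)\int_\R w^2$. By evenness and the substitution $v=A^{-1}u^\rho$,
\[
\int_\R \frac{\dd u}{(1+A^{-1}|u|^\rho)^2}=\frac{2A^{1/\rho}}{\rho}\int_0^\infty \frac{v^{1/\rho-1}}{(1+v)^2}\,\dd v=\frac{2A^{1/\rho}}{\rho}B\Big(\frac1\rho,2-\frac1\rho\Big).
\]
Since $\Gamma(2)=1$, this Beta value equals $\Gamma(1/\rho)\Gamma(2-1/\rho)$. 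The constant is therefore $\frac{8}{\sigma^2\rho}A^{1/\rho}\Gamma(1/\rho)\Gamma(2-1/\rho)$, as claimed. Convergence of the integral requires $0<1/\rho<2$, which again is $\rho>1/2$.

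\emph{Step 3 (trace asymptotics).} Lemma~\ref{lem:C-trace} gives $\tr(C_{\sigma,n}^2)=\frac{n}{2\pi}\int g_{\sigma,n}^2+o(n\Delta_n)$. Inserting Step 2 gives $\tr(C_{\sigma,n}^2)\sim n\Delta_n\frac{4}{\pi\sigma^2\rho}A^{1/\rho}\Gamma(1/\rho)\Gamma(2-1/\rho)$. This equals $2n\Delta_n I_{\sigma\sigma}$ with $I_{\sigma\sigma}$ as in \eqref{eq:Iss}.

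The main obstacle is the uniform domination in Step 1. Away from the scale $|\lambda|\asymp\Delta_n$, both the aliasing sum for $k\ne0$ and the exact form of $f^{(W)}$ must be controlled so that the bound $|u|^{-\rho}$ holds up to $|u|=\pi/\Delta_n$. I would split the range into $|\lambda|\le\Delta_n$, $\Delta_n\le|\lambda|\le\pi/2$ and $\pi/2\le|\lambda|\le\pi$, and bound $\sum_{k\ne0}|\lambda+2\pi k|^{-1-2H}$ by a constant on each piece.
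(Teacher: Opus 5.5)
Your proposal is correct and follows essentially the same route as the paper. Both arguments rescale $\lambda=\Delta_n u$, use the cancellation of the common OU factor to get $g_{\sigma,n}(\Delta_n u)\to(2/\sigma)w(u)$, apply dominated convergence, evaluate the Beta integral via $t=|u|^{\rho}/A$, and pass to the trace through Lemma~\ref{lem:C-trace}. Your explicit envelope $\min\{2/\sigma,\,C|u|^{-\rho}\}$, built from the lower bounds on $f^{(W)}_{\Delta}$ and the aliasing bound on $f^{(H)}_{\Delta}$, supplies the dominating function that the paper only asserts implicitly when it notes that $w^2$ is integrable.
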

	
	\begin{proof}
		Write $g_{\sigma,n}(\lambda)=(2/\sigma)w_{\Delta_n}(\lambda)$ with
		\[
		w_{\Delta}(\lambda):=\frac{f_{\Delta}^{(H)}(\lambda)}{f_{\Delta}^{(H)}(\lambda)+f_{\Delta}^{(W)}(\lambda)}.
		\]
		The contribution to $w_{\Delta_n}^2(\lambda)$ is concentrated on the region $|\lambda|\asymp \Delta_n$. On that region the OU denominator $((\alpha\Delta_n)^2+\lambda^2)^{-1}$ is common to both spectral pieces and cancels in the ratio, so for $\lambda=\Delta_nu$,
		\[
		\frac{f_{\Delta_n}^{(H)}(\Delta_nu)}{f_{\Delta_n}^{(W)}(\Delta_nu)}\to A|u|^{-\rho},
		\qquad
		w_{\Delta_n}(\Delta_nu)\to w(u):=\frac{1}{1+A^{-1}|u|^{\rho}}.
		\]
		Since $\rho>1/2$, the function $w(u)^2$ is integrable on $\mathbb R$, and dominated convergence gives
		\[
		\int_{-\pi}^{\pi}g_{\sigma,n}(\lambda)^2\,\dd\lambda
		\sim \Delta_n\frac{4}{\sigma^2}\int_{\mathbb R}w(u)^2\,\dd u.
		\]
		Using $t=|u|^{\rho}/A$,
		\[
		\int_{\mathbb R}w(u)^2\,\dd u
		=\frac{2}{\rho}A^{1/\rho}\int_0^\infty \frac{t^{1/\rho-1}}{(1+t)^2}\,\dd t
		=\frac{2}{\rho}A^{1/\rho}\Gamma\Big(\frac1\rho\Big)\Gamma\Big(2-\frac1\rho\Big),
		\]
		which proves the displayed asymptotic. The formula for $\tr(C_{\sigma,n}^2)$ then follows from lemma \ref{lem:C-trace}.
	\end{proof}
	
	\begin{proposition}\label{prop:sigma-clt}
		Under $H>3/4$, $\Delta_n\to0$ and $T_n=n\Delta_n\to\infty$,
		\[
		\frac{S_{\sigma,n}}{\sqrt{n\Delta_n}}\dto \mathcal{N}\big(0,I_{\sigma\sigma}(\sigma,H)\big).
		\]
	\end{proposition}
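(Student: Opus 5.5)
The plan is to apply Lemma~\ref{lem:qf-clt} to the matrices $M_n=C_{\sigma,n}$ and then rescale. By \eqref{eq:sigma-score} we have $S_{\sigma,n}=\frac12 Q_n(C_{\sigma,n})$, where $C_{\sigma,n}$ is real symmetric. This holds because $T_n(h_{\Delta_n})$ is a symmetric Toeplitz matrix (the symbol $h_{\Delta_n}$ is even) and $\Sigma_n^{-1/2}$ is symmetric. So only two ingredients are needed: the operator-norm to Frobenius-norm ratio must vanish, and the Frobenius norm must be identified asymptotically.

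\textbf{Step 1 (the ratio condition).} Lemma~\ref{lem:C-op} gives $\norm{C_{\sigma,n}}_{\op}\le 2/\sigma$, uniformly in $n$. Lemma~\ref{lem:C-asym}, which rests on Lemma~\ref{lem:C-trace} and the assumption $H>3/4$, gives
\[
\norm{C_{\sigma,n}}_{\F}^2=\tr(C_{\sigma,n}^2)\sim 2(n\Delta_n)I_{\sigma\sigma}(\sigma,H).
\]
The constant $I_{\sigma\sigma}(\sigma,H)$ is strictly positive and finite. Indeed, $\rho=2H-1\in(1/2,1)$ makes $1/\rho\in(1,2)$, so both Gamma arguments $1/\rho$ and $2-1/\rho$ lie in $(0,1)$, and $A>0$ since $\sin(\pi H)>0$. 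Since $T_n=n\Delta_n\to\infty$, we get $\norm{C_{\sigma,n}}_{\F}\to\infty$, hence
\[
\norm{C_{\sigma,n}}_{\op}/\norm{C_{\sigma,n}}_{\F}\le (2/\sigma)\big/\norm{C_{\sigma,n}}_{\F}\to0.
\]

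\textbf{Step 2 (CLT and rescaling).} Lemma~\ref{lem:qf-clt} yields
\[
\frac{Q_n(C_{\sigma,n})}{\sqrt{2\tr(C_{\sigma,n}^2)}}\dto\mathcal N(0,1).
\]
Now write
\[
\frac{S_{\sigma,n}}{\sqrt{n\Delta_n}}=\frac{Q_n(C_{\sigma,n})}{\sqrt{2\tr(C_{\sigma,n}^2)}}\cdot\frac{\sqrt{2\tr(C_{\sigma,n}^2)}}{2\sqrt{n\Delta_n}}.
\]
The deterministic second factor converges to $\sqrt{4 n\Delta_n I_{\sigma\sigma}}\,/\,(2\sqrt{n\Delta_n})=\sqrt{I_{\sigma\sigma}}$. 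Slutsky's lemma then gives the limit $\mathcal N(0,I_{\sigma\sigma}(\sigma,H))$. As a consistency check, this matches \eqref{eq:varofGQF}: $\Var(S_{\sigma,n})=\frac14\cdot 2\tr(C_{\sigma,n}^2)\sim n\Delta_n I_{\sigma\sigma}$.

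\textbf{Main obstacle.} Given the earlier lemmas, the argument above is essentially immediate. The real weight sits in the trace asymptotics of Lemma~\ref{lem:C-trace}, specifically the bound $\norm{R_n}_{\F}=o((n\Delta_n)^{1/2})$ for $\Sigma_n^{-1}T_n(h)-T_n(h/f)$. There the singular low-frequency behaviour, on the scale $|\lambda|\asymp\Delta_n$, has to be handled by the truncation and Toeplitz-product argument inherited from \cite{Cai2026}. I would take that lemma as given. The only extra point to verify here is the strict positivity of $I_{\sigma\sigma}$, so that the normalization is nondegenerate, and this was checked in Step~1.
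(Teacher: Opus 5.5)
Your proof is correct and is the paper's argument: the bounded operator norm from Lemma~\ref{lem:C-op} and $\tr(C_{\sigma,n}^2)\sim 2(n\Delta_n)I_{\sigma\sigma}$ from Lemma~\ref{lem:C-asym} give the vanishing ratio, then Lemma~\ref{lem:qf-clt} applies to $C_{\sigma,n}$, and the factor $\frac12$ in \eqref{eq:sigma-score} (your Slutsky rescaling, which the paper leaves implicit) yields variance $I_{\sigma\sigma}$. One harmless slip: $1/\rho\in(1,2)$ does not lie in $(0,1)$ (only $2-1/\rho$ does), but both Gamma arguments are positive, so $I_{\sigma\sigma}>0$ still holds.
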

	
	\begin{proof}
		By  lemmas \ref{lem:C-op}, \ref{lem:C-trace} and \ref{lem:C-asym},
		\[
		\frac{\norm{C_{\sigma,n}}_{\op}^2}{\tr(C_{\sigma,n}^2)}\lesssim \frac{1}{n\Delta_n}\to0.
		\]
		Apply lemma \ref{lem:qf-clt} to $C_{\sigma,n}$ and use \eqref{eq:sigma-score}.
	\end{proof}
	
	\subsection{\texorpdfstring{The $H$-direction of the score functions: removing the explicit logarithmic term}{The H-direction of the score functions: removing the explicit logarithmic term}}
	We next turn to the $H$-score. The key point is that its raw form contains the explicit singular term $\sigma\log\Delta_n\,S_{\sigma,n}$, so here we isolate the remainder $R_{H,n}$ and analyze its asymptotic property.
	Set
	\[
	\Ln:=\log(1/\Delta_n),
	\qquad
	g_{H,n}(\lambda):=\frac{r_{\Delta_n}(\lambda)}{f_{\Delta_n}(\lambda)}
	=w_{\Delta_n}(\lambda)\psi_{\Delta_n}(\lambda),
	\]
	where
	\[
	w_{\Delta}(\lambda):=\frac{f^{(H)}_{\Delta}(\lambda)}{f_{\Delta}(\lambda)}\in[0,1],
	\qquad
	\psi_{\Delta}(\lambda):=\partial_H\log C(H)+\partial_H\log S_{H,\Delta}(\lambda).
	\]
	On the scale $\lambda=\Delta_nu$ one has
	\[
	\psi_{\Delta_n}(\Delta_nu)=2\Ln-2\log|u|+\partial_H\log C(H)+O(1).
	\]
	Hence the raw remainder has one logarithm more than the $\sigma$-direction.
	
	\begin{lemma}\label{lem:D-op}
		Under $H>3/4$ and $T_n\to\infty$,
		\[
		\norm{D_{H,n}}_{\op}\lesssim \Ln+\log n.
		\]
		In particular, for power meshes $\Delta_n=n^{-\kappa}$, $\norm{D_{H,n}}_{\op}=O(\log n)$.
	\end{lemma}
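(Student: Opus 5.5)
The plan is the same generalized Rayleigh quotient argument as in lemma~\ref{lem:C-op}. By remark~\ref{rem:gen-rayleigh-whitened},
\[
\norm{D_{H,n}}_{\op}=\sup_{x\ne0}\frac{|x^\top T_n(r_{\Delta_n})x|}{x^\top T_n(f_{\Delta_n})x}.
\]
The Toeplitz quadratic-form representation gives $x^\top T_n(g)x=\frac1{2\pi}\int_{-\pi}^{\pi}g(\lambda)|\hat x(\lambda)|^2\,\dd\lambda$, where $\hat x(\lambda)=\sum_j x_je^{ij\lambda}$. Since $f_{\Delta_n}>0$, we get $|x^\top T_n(r)x|\le \sup_\lambda |r/f|\cdot x^\top T_n(f)x$. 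This reduces the claim to the pointwise bound
\[
\norm{D_{H,n}}_{\op}\le \sup_{\lambda\in[-\pi,\pi]}|g_{H,n}(\lambda)|=\sup_\lambda w_{\Delta_n}(\lambda)\,|\psi_{\Delta_n}(\lambda)|.
\]
Because $0\le w_{\Delta_n}\le1$, it suffices to control $|\psi_{\Delta_n}|$ on the set where $w_{\Delta_n}$ is not negligible.

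Next I compute $\psi_\Delta$ explicitly. We have $\partial_H\log S_{H,\Delta}(\lambda)$ equal to a weighted average of $-2\log|\lambda+2\pi k|$, with weights proportional to $|\lambda+2\pi k|^{1-2H}/((\alpha\Delta)^2+(\lambda+2\pi k)^2)$. For $|\lambda|\le\pi$ and $k\ne0$ one has $|\lambda+2\pi k|\ge\pi$. Those terms are summable uniformly in $\Delta$ (the tail behaves like $|k|^{-1-2H}\log|k|$), and each has bounded logarithm, so their contribution is $O(1)$. The $k=0$ term dominates and contributes $-2\log|\lambda|$. This gives
\[
|\psi_{\Delta}(\lambda)|\le 2|\log|\lambda||+O(1)
\]
uniformly in $\lambda\in[-\pi,\pi]\setminus\{0\}$ and $\Delta\le1$. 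This is consistent with the displayed expansion $2L_n-2\log|u|$ at $\lambda=\Delta_n u$.

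The main obstacle is the singularity $\log|\lambda|\to\infty$ as $\lambda\to0$. There $w_{\Delta_n}\to1$, since the fractional part dominates at low frequency because $\rho>0$. So $\sup|g_{H,n}|$ is actually infinite, and the crude symbol bound fails. I would handle this by splitting the symbol as $r=r\mathbf 1_{\{|\lambda|>\epsilon_n\}}+r\mathbf 1_{\{|\lambda|\le\epsilon_n\}}$, taking $\epsilon_n=\Delta_n/n$ (or $n^{-1}$ times a power of $\Delta_n$). On the outer part, the Rayleigh bound gives at most $2\log(1/\epsilon_n)+O(1)\lesssim L_n+\log n$.

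For the inner part I will not use the ratio. I compare the quadratic forms directly: $x^\top T_n(r\mathbf 1_{|\lambda|\le\epsilon_n})x\le \frac{1}{2\pi}\int_{|\lambda|\le\epsilon_n}|r||\hat x|^2$. Using $|\hat x(\lambda)|^2\le n|x|^2$, this is at most $n|x|^2\int_{|\lambda|\le\epsilon_n}f^{(H)}|\psi|$. For the denominator, $x^\top T_n(f)x\ge x^\top T_n(f^{(W)})x\ge c\,\Delta_n|x|^2$, since $f^{(W)}_\Delta\gtrsim\Delta$ uniformly, the OU spectral density being bounded below by order $\Delta$ at $\lambda=\pi$. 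Near zero, $f^{(H)}(\lambda)\asymp\Delta^{2H}|\lambda|^{-1-\rho}$ for $|\lambda|\gg\Delta$, and it is bounded by $\Delta^{2H-2}|\lambda|^{1-2H}$ for $|\lambda|\lesssim\alpha\Delta$. With $\epsilon_n\le\alpha\Delta_n$, the inner integral is therefore $\lesssim \Delta_n^{2H-2}\epsilon_n^{2-2H}\log(1/\epsilon_n)$. The resulting ratio is
\[
\frac{n\,\Delta_n^{2H-3}\,\epsilon_n^{2-2H}\log(1/\epsilon_n)}{1},
\]
and choosing $\epsilon_n=\Delta_n n^{-a}$ with $a$ large enough (depending on $H$) makes it $o(1)$ when $\Delta_n$ is polynomial in $n$. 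For general meshes I take $\epsilon_n$ a suitable power of $\Delta_n/n$. In either case $\log(1/\epsilon_n)\lesssim L_n+\log n$, and adding the two parts gives the claimed bound $\norm{D_{H,n}}_{\op}\lesssim L_n+\log n$, hence $O(\log n)$ for power meshes.
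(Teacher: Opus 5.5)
Your argument is correct. It shares the paper's skeleton (generalized Rayleigh quotient, $0\le w_{\Delta_n}\le 1$, a logarithmic bound on $\psi_{\Delta_n}$), but it treats the low-frequency singularity differently, and rigorously where the paper does not.

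The paper bounds $\norm{D_{H,n}}_{\op}$ by $\sup_\lambda|g_{H,n}(\lambda)|$. It then restricts to the ``effective range'' $|u|\gtrsim(n\Delta_n)^{-1}$, asserting that frequencies below order $1/n$ are not resolved by an $n\times n$ Toeplitz form. As you observe, $w_{\Delta_n}\to1$ and $|\psi_{\Delta_n}(\lambda)|\sim 2\log(1/|\lambda|)$ as $\lambda\to0$, so that supremum is $+\infty$. The paper's displayed inequality is therefore vacuous, and its cutoff is a heuristic.

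Splitting $r_{\Delta_n}=r_{\Delta_n}\mathbf 1_{\{|\lambda|>\epsilon_n\}}+r_{\Delta_n}\mathbf 1_{\{|\lambda|\le\epsilon_n\}}$ turns this into a proof:
\begin{itemize}
\item The outer piece gets the Rayleigh bound $2\log(1/\epsilon_n)+O(1)$ from your uniform estimate $|\psi_\Delta(\lambda)|\le 2|\log|\lambda||+O(1)$. Here the normalizing sum in the weighted average over $k$ is bounded below, because $|\lambda|^{1-2H}\ge\pi^{1-2H}$ on $[-\pi,\pi]$.
\item The inner piece is negligible by $|\hat x|^2\le n|x|^2$ together with $x^\top\Sigma_n x\ge \min_\lambda f^{(W)}_{\Delta_n}(\lambda)\,|x|^2\gtrsim\Delta_n|x|^2$. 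The Brownian component provides this lower bound for free.
\end{itemize}

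Your crude inner estimate would not suffice at the paper's cutoff $\epsilon_n\asymp 1/n$. The resulting bound is then of order $n^{2H-1}\Delta_n^{2H-3}\log n$, which is not $O(\log n)$. Pushing $\epsilon_n$ down polynomially (e.g.\ $\epsilon_n=(\Delta_n/n)^b$ with $b(2-2H)>3-2H$) is what makes it work. This costs nothing, because only $\log(1/\epsilon_n)$ enters the final bound.

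Minor slips and additions:
\begin{itemize}
\item For $|\lambda|\gg\Delta$ one has $f^{(H)}_\Delta(\lambda)\asymp\Delta^{2H}|\lambda|^{-1-2H}$, not $|\lambda|^{-1-\rho}$. This is unused, since the bound $f^{(H)}_{\Delta}(\lambda)\lesssim\Delta^{2H-2}|\lambda|^{1-2H}+\Delta^{2H}$ holds for all $\lambda$.
\item The $k\neq0$ aliasing part of $f^{(H)}_{\Delta_n}$ contributes only $O(\Delta_n^{2H}\epsilon_n)$ to the inner integral.
\item Since $T_n\to\infty$ forces $\Delta_n>1/n$ eventually, $\Ln<\log n$. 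So your bound is $O(\log n)$ for every admissible mesh, not only for power meshes.
\end{itemize}
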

	
	\begin{proof}
		By remark \ref{rem:gen-rayleigh-whitened},
		\[
		\norm{D_{H,n}}_{\op}
		=\sup_{x\ne0}\frac{|x^\top T_n(r_{\Delta_n})x|}{x^\top T_n(f_{\Delta_n})x}
		\le \sup_{\lambda\in[-\pi,\pi]}|g_{H,n}(\lambda)|.
		\]
		On the low frequency scale $\lambda=\Delta_nu$ one has
		\[
		\psi_{\Delta_n}(\Delta_nu)=2\Ln-2\log|u|+O(1).
		\]
		In a size-$n$ Toeplitz quadratic form, frequencies below order $1/n$ are not resolved, so on the effective range $|u|=|\lambda|/\Delta_n\gtrsim (n\Delta_n)^{-1}$ we have $|\log|u||\lesssim \Ln+\log n$. Since $0\le w_{\Delta_n}\le1$, it follows that
		\[
		|g_{H,n}(\lambda)|=|w_{\Delta_n}(\lambda)\psi_{\Delta_n}(\lambda)|\lesssim \Ln+\log n,
		\]
		which yields the claim.
	\end{proof}
	
	\begin{lemma}\label{lem:D-trace}
		Under $H>3/4$ and $T_n\to\infty$,
		\[
		\tr(D_{H,n}^2)=\frac{n}{2\pi}\int_{-\pi}^{\pi}g_{H,n}(\lambda)^2\,\dd\lambda+o(n\Delta_n\Ln^2),
		\]
		and moreover
		\[
		\tr(D_{H,n}^2)
		\sim 2(n\Delta_n\Ln^2)I_{HH}^{\mathrm{rem}}(\sigma,H),
		\]
		with
		\begin{equation}\label{eq:IHHrem}
			I_{HH}^{\mathrm{rem}}(\sigma,H)
			=\frac1\pi J_0,
			\qquad
			J_0:=\int_{\R}\frac{\dd u}{(1+A^{-1}|u|^\rho)^2}
			=\frac{2}{\rho}A^{1/\rho}\Gamma\Big(\frac1\rho\Big)\Gamma\Big(2-\frac1\rho\Big).
		\end{equation}
	\end{lemma}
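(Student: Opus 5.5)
The argument has two parts, mirroring Lemmas~\ref{lem:C-trace} and \ref{lem:C-asym}: a Toeplitz approximation step that reduces $\tr(D_{H,n}^2)$ to the $L^2$ norm of the ratio symbol $g_{H,n}$, and an explicit asymptotic for $\int g_{H,n}^2$.

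\textbf{Step 1 (Toeplitz approximation).} Set $B_n:=\Sigma_n^{-1}T_n(r_{\Delta_n})$. Since $D_{H,n}$ is similar to $B_n$, we have $\tr(D_{H,n}^2)=\tr(B_n^2)$. Put $G_n:=T_n(g_{H,n})$ and $R_n:=B_n-G_n$, and write
\[
\tr(B_n^2)=\tr(G_n^2)+2\tr(G_nR_n)+\tr(R_n^2).
\]
Running the low frequency truncation and Toeplitz-product argument of \cite{Cai2026} (used in Lemma~\ref{lem:C-trace}), now with the symbol $g_{H,n}=w_{\Delta_n}\psi_{\Delta_n}$, should give
\[
\norm{R_n}_{\op}=O(\Ln+\log n),\qquad \norm{R_n}_{\F}=o\bigl((n\Delta_n)^{1/2}\Ln\bigr).
\]
The justification is that $\psi_{\Delta_n}$ is $w_{\Delta_n}$ times a slowly varying logarithmic factor. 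Since $\psi_{\Delta_n}(\Delta_n u)=2\Ln-2\log|u|+O(1)$, one can split $g_{H,n}=2\Ln\, w_{\Delta_n}+\tilde g_n$. The first piece is $\sigma\Ln$ times the $\sigma$-symbol, so Lemma~\ref{lem:C-trace} applies to it directly. The second piece has the same transition structure on $|\lambda|\asymp\Delta_n$, with an extra factor $\log|u|$. Bound the cross term by
\[
|\tr(G_nR_n)|\le\norm{G_n}_{\F}\norm{R_n}_{\F},
\]
where $\norm{G_n}_{\F}=O((n\Delta_n)^{1/2}\Ln)$. Finally, the Fejér-kernel identity replaces $\tr(G_n^2)$ by $\frac{n}{2\pi}\int g_{H,n}^2$. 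The error in that replacement lives on the transition region $|\lambda|\asymp\Delta_n$ and costs $o(n\Delta_n\Ln^2)$, because $n\Delta_n\to\infty$ ensures that this region is resolved at frequency resolution $1/n$. Together these give the first display of the lemma.

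\textbf{Step 2 (integral asymptotics).} Substitute $\lambda=\Delta_n u$:
\[
\int_{-\pi}^{\pi} g_{H,n}^2\,\dd\lambda=\Delta_n\int_{|u|\le\pi/\Delta_n} w_{\Delta_n}(\Delta_n u)^2\bigl(2\Ln-2\log|u|+O(1)\bigr)^2\,\dd u .
\]
Divide by $\Ln^2$. The integrand converges pointwise to $4w(u)^2$. It is dominated by a constant multiple of $w(u)^2(1+\log^2|u|/\Ln^2)$, which after truncation is bounded by $C w(u)^2(1+\log^2|u|)$; this is integrable on $\R$ because $w(u)^2\sim A^2|u|^{-2\rho}$ with $2\rho>1$, and $\log^2|u|$ is integrable near $0$. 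The high-frequency part $|u|\gg1$ (that is, $|\lambda|\gg\Delta_n$) needs the bound $w_{\Delta_n}(\lambda)\lesssim \min\bigl(1,(|\lambda|/\Delta_n)^{-\rho}\bigr)$ uniformly in $n$, including the aliased terms $k\neq0$ in $S_{H,\Delta}$; this bound I would take from the explicit forms \eqref{eq:fH}--\eqref{eq:fW}. Dominated convergence then yields
\[
\int g_{H,n}^2\,\dd\lambda\sim 4\Delta_n\Ln^2 J_0.
\]
Hence
\[
\tr(D_{H,n}^2)\sim \frac{n}{2\pi}\,4\Delta_n\Ln^2J_0=2(n\Delta_n\Ln^2)\,\frac{J_0}{\pi},
\]
which is the claimed value $I_{HH}^{\mathrm{rem}}=J_0/\pi$. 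The closed form for $J_0$ is the same Beta integral computed in Lemma~\ref{lem:C-asym}.

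\textbf{Main obstacle.} The delicate point is the Frobenius remainder $\norm{R_n}_{\F}=o((n\Delta_n)^{1/2}\Ln)$ for the logarithmically weighted symbol. The symbol $\log|u|$ is unbounded near $u=0$, and the inverse Toeplitz comparison $\Sigma_n^{-1}\approx T_n(1/f)$ is weakest at frequencies $|\lambda|\lesssim 1/n$. My plan is to cut off at $|\lambda|\le \epsilon\Delta_n$, where $w_{\Delta_n}\approx1$ and the symbol is essentially $2\Ln$ times the $\sigma$-symbol, so Lemma~\ref{lem:C-trace} handles it. The residual part, which involves $\log|u|$, is bounded in $L^2$ by $O(\Delta_n)$ without any $\Ln$ factor, so its contribution is $o(n\Delta_n\Ln^2)$ since $\Ln\to\infty$.
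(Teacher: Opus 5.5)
Your proposal is correct and follows essentially the same route as the paper: similarity reduction to $\Sigma_n^{-1}T_n(r_{\Delta_n})$, the Toeplitz sandwich approximation with Frobenius remainder $o((n\Delta_n)^{1/2}\Ln)$ imported from \cite{Cai2026} (which the paper likewise asserts rather than proves), the Fej\'er-kernel replacement, and dominated convergence on the scale $\lambda=\Delta_n u$ giving $\int g_{H,n}^2\,\dd\lambda\sim 4\Delta_n\Ln^2J_0$. Your treatment is somewhat more careful than the paper's sketch in two places: bounding the cross term by $\norm{G_n}_{\F}\norm{R_n}_{\F}$ makes any operator-norm bound on the non-symmetric $R_n$ unnecessary, and you give an explicit dominating function $Cw(u)^2(1+\log^2|u|)$ together with the uniform envelope for $w_{\Delta_n}$.
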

	
	\begin{proof}
		Let $B_n:=\Sigma_n^{-1}T_n(r_{\Delta_n})$. Since $D_{H,n}$ is similar to $B_n$,
		\[
		\tr(D_{H,n}^2)=\tr(B_n^2)=\tr\bigl(\Sigma_n^{-1}T_n(r_{\Delta_n})\Sigma_n^{-1}T_n(r_{\Delta_n})\bigr).
		\]
		With $G_n:=T_n(g_{H,n})$ and $R_n:=B_n-G_n$, the same sandwich approximation as in lemma \ref{lem:C-trace} yields
		\[
		\norm{R_n}_{\op}=o(\Ln),
		\qquad
		\norm{R_n}_{\F}=o((n\Delta_n)^{1/2}\Ln),
		\]
		so that $\tr(D_{H,n}^2)=\tr(G_n^2)+o(n\Delta_n\Ln^2)$. The Fej\'er-kernel trace identity then gives the displayed integral approximation.\\
		For the main term, write $\lambda=\Delta_nu$. Since $w_{\Delta_n}(\Delta_nu)\to w(u)$ and
		\[
		\psi_{\Delta_n}(\Delta_nu)=2\Ln+\partial_H\log C(H)-2\log|u|+O(1),
		\]
		we obtain
		\[
		g_{H,n}(\Delta_nu)=w(u)\bigl(2\Ln+O(1)-2\log|u|\bigr)+o(1).
		\]
		Because $\rho>1/2$, dominated convergence gives
		\[
		\int_{-\pi}^{\pi}g_{H,n}(\lambda)^2\,\dd\lambda
		\sim 4\Delta_n\Ln^2\int_{\mathbb R}w(u)^2\,\dd u
		=4\Delta_n\Ln^2J_0.
		\]
		Combining this with the trace approximation proves the result.
	\end{proof}
	
	\begin{proposition}\label{prop:RH-clt}
		Under $H>3/4$ and $T_n\to\infty$,
		\[
		\frac{R_{H,n}}{\sqrt{n\Delta_n}\,\Ln}\dto \mathcal{N}\big(0,I_{HH}^{\mathrm{rem}}(\sigma,H)\big).
		\]
	\end{proposition}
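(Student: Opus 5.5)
This follows the pattern of Proposition~\ref{prop:sigma-clt}: we apply Lemma~\ref{lem:qf-clt} to the symmetric matrix $D_{H,n}$, using the representation $R_{H,n}=\frac12 Q_n(D_{H,n})$ from \eqref{eq:H-score-split}. Two ingredients are needed.

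\emph{The Lindeberg-type ratio.} We must show $\norm{D_{H,n}}_{\op}^2/\tr(D_{H,n}^2)\to0$. Lemma~\ref{lem:D-op} gives $\norm{D_{H,n}}_{\op}\lesssim \Ln+\log n$. Lemma~\ref{lem:D-trace} gives $\tr(D_{H,n}^2)\asymp n\Delta_n\Ln^2$, with a strictly positive constant since $J_0>0$. Hence
\[
\frac{\norm{D_{H,n}}_{\op}^2}{\tr(D_{H,n}^2)}\lesssim \frac{(\Ln+\log n)^2}{n\Delta_n\Ln^2}
\le \frac{2}{n\Delta_n}+\frac{2\log^2 n}{n\Delta_n\Ln^2}.
\]
The first term vanishes because $T_n\to\infty$. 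The second term is where some care is needed, and I expect it to be the main obstacle.

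If $\Delta_n=n^{-\kappa}$, then $\Ln=\kappa\log n$ and the second term is $O(1/(n\Delta_n))\to0$. In general, since $\Delta_n\to0$, we have $\Ln\to\infty$, so the second term is $o(\log^2 n/(n\Delta_n))$. This tends to zero under the growth condition \eqref{eq:main-growth-mainres}, which is in force for the score CLTs in Theorem~\ref{thm:main-lan-short}.

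If one wants only $T_n\to\infty$, I would try to sharpen Lemma~\ref{lem:D-op}. The idea is that on the unresolved range $|u|\lesssim (n\Delta_n)^{-1}$ the weight $w_{\Delta_n}$ is essentially $1$, and the Toeplitz form only sees averages of $\log|u|$ over windows of width $1/n$ in $\lambda$. Those averages are $O(\Ln+\log(n\Delta_n))$ relative to the main $2\Ln$ term. Plugging this into the ratio gives
\[
\frac{(\Ln+\log T_n)^2}{T_n\Ln^2}\;\lesssim\; \frac{1}{T_n}+\frac{\log^2 T_n}{T_n\Ln^2}\to0,
\]
which is the fallback argument.

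\emph{Conclusion.} Lemma~\ref{lem:qf-clt} now yields $Q_n(D_{H,n})/\sqrt{2\tr(D_{H,n}^2)}\dto\mathcal N(0,1)$. Therefore
\[
\frac{R_{H,n}}{\sqrt{n\Delta_n}\,\Ln}
=\frac{Q_n(D_{H,n})}{\sqrt{2\tr(D_{H,n}^2)}}\cdot\frac{\sqrt{2\tr(D_{H,n}^2)}}{2\sqrt{n\Delta_n}\,\Ln}.
\]
By Lemma~\ref{lem:D-trace}, the second factor converges to $\sqrt{4 I_{HH}^{\mathrm{rem}}}/2=\sqrt{I_{HH}^{\mathrm{rem}}(\sigma,H)}$. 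Slutsky's lemma then gives the stated limit $\mathcal N(0,I_{HH}^{\mathrm{rem}}(\sigma,H))$.
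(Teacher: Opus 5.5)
Your proposal is correct and follows the paper's argument: Lemma~\ref{lem:qf-clt} applied to $D_{H,n}$, with the ratio controlled by Lemmas~\ref{lem:D-op} and~\ref{lem:D-trace}, and with the normalization $\sqrt{I_{HH}^{\mathrm{rem}}(\sigma,H)}$ computed correctly. The step you flag as the main obstacle is not one, because $\log n=\Ln+\log T_n$: Lemma~\ref{lem:D-op} already gives $\norm{D_{H,n}}_{\op}\lesssim 2\Ln+\log T_n$, so $\norm{D_{H,n}}_{\op}^2/\tr(D_{H,n}^2)\lesssim T_n^{-1}+\log^2 T_n/(T_n\Ln^2)\to0$ using only $T_n\to\infty$ and $\Delta_n\to0$. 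Your ``fallback'' bound is therefore just Lemma~\ref{lem:D-op} rewritten, and neither \eqref{eq:main-growth-mainres} nor any sharpening is needed.
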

	
	\begin{proof}
		By lemma \ref{lem:D-op} and lemma \ref{lem:D-trace}
		\[
		\frac{\norm{D_{H,n}}_{\op}}{\norm{D_{H,n}}_{\mathrm F}}
		\lesssim \frac{\Ln+\log n}{\sqrt{n\Delta_n}\,\Ln}
		\to0,
		\]
		Applying lemma \ref{lem:qf-clt} to $D_{H,n}$ completes the proof.
	\end{proof}
	
	\subsection{\texorpdfstring{Projection of the $H$-remainder against the $\sigma$-direction}{Projection of the H-remainder against the sigma-direction}}
	Because $R_{H,n}$ still contains the component asymptotically parallel to $S_{\sigma,n}$, we project it out.
	Define
	\begin{equation}\label{eq:an-def}
		a_n:=\frac{\tr(C_{\sigma,n} D_{H,n})}{\tr(C_{\sigma,n}^2)},
		\qquad
		D_{H,n}^{\perp}:=D_{H,n}-a_nC_{\sigma,n},
		\qquad
		R_{H,n}^{\perp}:=\frac12Q_n(D_{H,n}^{\perp}).
	\end{equation}
	Then $R_{H,n}=a_nS_{\sigma,n}+R_{H,n}^{\perp}$ and
	\begin{equation}\label{eq:H-clean}
		S_{H,n}=\beta_n(\theta)S_{\sigma,n}+R_{H,n}^{\perp},
		\qquad
		\beta_n(\theta):=\sigma\log\Delta_n+a_n.
	\end{equation}
	
	\begin{lemma}\label{lem:an-expansion}
		Under $H>3/4$ and $T_n\to\infty$,
		\[
		\tr(C_{\sigma,n} D_{H,n})=\frac{n}{2\pi}\int_{-\pi}^{\pi}g_{\sigma,n}(\lambda)g_{H,n}(\lambda)\,\dd\lambda+o(n\Delta_n\Ln),
		\]
		with
		\[
		\tr(C_{\sigma,n} D_{H,n})\sim (n\Delta_n\Ln)\frac{2}{\pi\sigma}J_0.
		\]
		Moreover,
		\begin{equation}\label{eq:an-expansion}
			a_n=\sigma\big(\Ln+\frac12b-m\big)+o(1),
		\end{equation}
		where
		\begin{equation}\label{eq:b-m}
			b:=\partial_H\log C(H)=2\Psi(2H+1)+\pi\cot(\pi H),
			\qquad
			m:=\frac{1}{\rho}\Big(\log A+\Psi(1/\rho)-\Psi(2-1/\rho)\Big).
		\end{equation}
		Here $\Psi$ denotes the digamma function and $J_0$ defined in \eqref{eq:IHHrem}.
	\end{lemma}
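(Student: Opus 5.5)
The plan is to treat $\tr(C_{\sigma,n}D_{H,n})$ exactly as $\tr(C_{\sigma,n}^2)$ and $\tr(D_{H,n}^2)$ were treated in Lemmas~\ref{lem:C-trace} and~\ref{lem:D-trace}, then to compute the resulting integral on the scale $\lambda=\Delta_nu$ up to $o(\Delta_n)$, and finally to divide by the expansion of $\tr(C_{\sigma,n}^2)$ from Lemma~\ref{lem:C-asym}.

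\textbf{Step 1: trace approximation.} By similarity,
\[
\tr(C_{\sigma,n}D_{H,n})=\tr\bigl(\Sigma_n^{-1}T_n(h_{\Delta_n})\,\Sigma_n^{-1}T_n(r_{\Delta_n})\bigr).
\]
Write $\Sigma_n^{-1}T_n(h_{\Delta_n})=T_n(g_{\sigma,n})+R_n^{\sigma}$ and $\Sigma_n^{-1}T_n(r_{\Delta_n})=T_n(g_{H,n})+R_n^{H}$, with the remainder bounds already established in those two proofs. The cross terms are then controlled by $|\tr(XY)|\le\norm{X}_{\F}\norm{Y}_{\F}$. The main term $\tr\bigl(T_n(g_{\sigma,n})T_n(g_{H,n})\bigr)$ is handled by the polarized Fej\'er identity, which gives
\[
\frac{n}{2\pi}\int_{-\pi}^{\pi} g_{\sigma,n}\,g_{H,n}\,\dd\lambda+o(n\Delta_n\Ln).
\]
This proves the first display.

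\textbf{Step 2: the integral.} Substitute $\lambda=\Delta_nu$. The $k=0$ summand of $S_{H,\Delta}$ gives exactly $\partial_H\log\bigl(|\lambda|^{1-2H}\bigr)=-2\log|\lambda|$, while the $k\neq0$ summands are bounded and their $H$-derivative contributes $o(1)$ for $|\lambda|\to0$. Hence
\[
\psi_{\Delta_n}(\Delta_nu)=2\Ln+b-2\log|u|+o(1),
\]
locally uniformly in $u$. Using $w_{\Delta_n}(\Delta_nu)\to w(u)$, the tail bound $w(u)\lesssim |u|^{-\rho}$ with $2\rho>1$ (so that $w^2$ and $w^2|\log|u||$ are integrable), and dominated convergence, one obtains
\[
\int_{-\pi}^{\pi} g_{\sigma,n}g_{H,n}\,\dd\lambda=\Delta_n\frac{2}{\sigma}\Bigl((2\Ln+b)J_0-2J_1\Bigr)+o(\Delta_n),
\qquad J_1:=\int_{\R}w(u)^2\log|u|\,\dd u .
\]
The leading term $\Delta_n\frac{4}{\sigma}\Ln J_0$ gives the stated asymptotic $\tr(C_{\sigma,n}D_{H,n})\sim (n\Delta_n\Ln)\frac{2}{\pi\sigma}J_0$.

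\textbf{Step 3: the constants.} Use the substitution $t=|u|^\rho/A$, so that $\log|u|=(\log A+\log t)/\rho$. Then differentiate $B(s,2-s)=\int_0^\infty t^{s-1}(1+t)^{-2}\,\dd t$ in $s$ at $s=1/\rho$, which gives
\[
\frac{J_1}{J_0}=\frac{1}{\rho}\Bigl(\log A+\Psi(1/\rho)-\Psi(2-1/\rho)\Bigr)=m .
\]
Dividing the Step~2 expansion, $\frac{n}{2\pi}\Delta_n\frac{2}{\sigma}\bigl((2\Ln+b)J_0-2J_1\bigr)$, by $\tr(C_{\sigma,n}^2)=\frac{n}{2\pi}\Delta_n\frac{4}{\sigma^2}J_0+o(n\Delta_n)$ yields
\[
a_n=\sigma\Bigl(\Ln+\tfrac12 b-m\Bigr)+o(1),
\]
provided all errors are $o(n\Delta_n)$. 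The identity for $b=\partial_H\log\bigl(\Gamma(2H+1)\sin(\pi H)\bigr)$ is a one-line differentiation.

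\textbf{Main obstacle.} The $o(1)$ accuracy in \eqref{eq:an-expansion} needs the numerator up to $o(n\Delta_n)$, not merely $o(n\Delta_n\Ln)$. An error of size $o(n\Delta_n\Ln)$ would only give $a_n=\sigma\Ln+o(\Ln)$. Three refinements are therefore required.
\begin{itemize}[label={}]
\item The sandwich remainders must satisfy $\norm{R_n^{\sigma}}_{\F}\norm{T_n(g_{H,n})}_{\F}=o(n\Delta_n)$, and similarly for the other cross terms.
\item The Fej\'er smoothing error must be $o(n\Delta_n)$; here the logarithmic singularity of $g_{H,n}$ is only resolved down to $|u|\gtrsim(n\Delta_n)^{-1}$.
\item The constant $\Denom$ of the denominator, $\tr(C_{\sigma,n}^2)$, must itself be accurate to $o(n\Delta_n)$ relative terms, which Lemma~\ref{lem:C-trace} already provides.
\end{itemize}
My first attempt is to exploit the exact split $g_{H,n}=\frac{\sigma}{2}(2\Ln+b)\,g_{\sigma,n}+\tilde g_n$, where $\tilde g_n:=w_{\Delta_n}\bigl(\partial_H\log S_{H,\Delta_n}+2\Ln\bigr)$ no longer carries the factor $\Ln$. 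Correspondingly, decompose $D_{H,n}=\frac{\sigma}{2}(2\Ln+b)\,C_{\sigma,n}+\tilde D_n$. The $\Ln$-part then contributes exactly $\frac{\sigma}{2}(2\Ln+b)\tr(C_{\sigma,n}^2)$, with no error at all, so only $\tr(C_{\sigma,n}\tilde D_n)$ has to be approximated to $o(n\Delta_n)$. That approximation requires the sandwich and Fej\'er bounds for an $O(1+|\log|u||)$ symbol, a case I expect the growth condition $\log^2 n/(n\Delta_n)\to0$ to handle.
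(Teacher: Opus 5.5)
Your Steps 1--3 follow the paper's proof almost line by line. This covers the sandwich reduction with an $o(n\Delta_n\Ln)$ error, the trace identity, the rescaling $\lambda=\Delta_nu$ with $g_{H,n}(\Delta_nu)=w(u)(2\Ln+b-2\log|u|)+o(1)$, and $J_1/J_0=m$ via $\partial_s B(s,2-s)$.

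You genuinely depart from the paper in deriving \eqref{eq:an-expansion}. The paper simply divides its numerator expansion, whose error is only $o(n\Delta_n\Ln)$, by $\tr(C_{\sigma,n}^2)=\frac{2}{\pi\sigma^2}J_0\,n\Delta_n+o(n\Delta_n)$, and reads off $a_n$ up to $o(1)$. As you observe, that bookkeeping only supports $a_n=\sigma\Ln(1+o(1))$. Even the relative $o(1)$ error of the denominator, multiplied by the leading term $\sigma\Ln$, is already $o(\Ln)$. So the direct route would also need $\tr(C_{\sigma,n}^2)$ to accuracy $o(n\Delta_n/\Ln)$, which Lemma~\ref{lem:C-trace} does not give.

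Your exact split $r_{\Delta_n}=\frac{\sigma}{2}(2\Ln+b)h_{\Delta_n}+\tilde r_{\Delta_n}$ gives $D_{H,n}=\frac{\sigma}{2}(2\Ln+b)C_{\sigma,n}+\tilde D_n$, and hence the identity
\[
a_n=\sigma\bigl(\Ln+\tfrac12 b\bigr)+\frac{\tr(C_{\sigma,n}\tilde D_n)}{\tr(C_{\sigma,n}^2)},
\]
in which the $\Ln$ and $b$ contributions are exact. After that you only need $\tr(C_{\sigma,n}\tilde D_n)=-\frac{2}{\pi\sigma}J_1\,n\Delta_n+o(n\Delta_n)$ together with Lemma~\ref{lem:C-asym}. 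The ratio then tends to $-\sigma J_1/J_0=-\sigma m$. Your third bullet is therefore covered automatically, and the argument actually closes. The paper's route gains brevity, but its stated error terms do not justify the $o(1)$ claim, so yours is the more careful proof.

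Two small corrections. First, your formula for $\tilde g_n$ has a sign slip. Since $\partial_H\log S_{H,\Delta_n}(\Delta_nu)=2\Ln-2\log|u|+o(1)$, the residual symbol must be $\tilde g_n=w_{\Delta_n}\bigl(\partial_H\log S_{H,\Delta_n}-2\Ln\bigr)$, which satisfies $\tilde g_n(\Delta_nu)\to-2w(u)\log|u|$. With $+2\Ln$ it would still carry $4\Ln$.

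Second, the lemma is stated under $T_n\to\infty$ only, so appealing to $\log^2 n/(n\Delta_n)\to0$ adds a hypothesis. The one input you leave unverified is a sandwich-plus-Fej\'er approximation of $\tr(C_{\sigma,n}\tilde D_n)$ with error $o(n\Delta_n)$, for a symbol with profile $w(u)(1+|\log|u||)$. This is the same kind of estimate the paper asserts without proof, for instance in the direct reduction for $D_{H,n}^{\perp}$ in Lemma~\ref{lem:Hperp-trace}. So you are no worse off than the paper here, but you should flag it explicitly as the remaining step.
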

	\begin{proof}
		We first reduce the trace of the product of matrices to a trace involving only
		Toeplitz matrices generated by symbol ratios. Since
		\[
		C_{\sigma,n}
		=
		\Sigma_n^{-1/2}T_n(h_{\Delta_n})\Sigma_n^{-1/2},
		\qquad
		D_{H,n}
		=
		\Sigma_n^{-1/2}T_n(r_{\Delta_n})\Sigma_n^{-1/2},
		\]
		the cyclicity of the trace yields
		\[
		\tr(C_{\sigma,n}D_{H,n})
		=
		\tr\!\Big(
		\Sigma_n^{-1}T_n(h_{\Delta_n})\Sigma_n^{-1}T_n(r_{\Delta_n})
		\Big).
		\]
		Writing
		\[
		g_{\sigma,n}(\lambda):=\frac{h_{\Delta_n}(\lambda)}{f_{\Delta_n}(\lambda)},
		\qquad
		g_{H,n}(\lambda):=\frac{r_{\Delta_n}(\lambda)}{f_{\Delta_n}(\lambda)},
		\]
		and letting
		\[
		G_{\sigma,n}:=T_n(g_{\sigma,n}),
		\qquad
		G_{H,n}:=T_n(g_{H,n}),
		\]
		the same Toeplitz sandwich approximation as in the previous sections gives
		\[
		\tr(C_{\sigma,n}D_{H,n})
		=
		\tr(G_{\sigma,n}G_{H,n})+o(n\Delta_n\Ln).
		\]
		
		We then pass from the matrix trace to the corresponding spectral integral. By the standard
		Toeplitz trace identity,
		\[
		\tr(G_{\sigma,n}G_{H,n})
		=
		\frac{n}{2\pi}\int_{-\pi}^{\pi}
		g_{\sigma,n}(\lambda)g_{H,n}(\lambda)\,\dd\lambda
		+o(n\Delta_n\Ln),
		\]
		and therefore
		\[
		\tr(C_{\sigma,n}D_{H,n})
		=
		\frac{n}{2\pi}\int_{-\pi}^{\pi}
		g_{\sigma,n}(\lambda)g_{H,n}(\lambda)\,\dd\lambda
		+o(n\Delta_n\Ln).
		\]
		It remains to evaluate this integral on the low frequency scale. Put $\lambda=\Delta_n u$.
		Using the low frequency expansions established earlier, we have uniformly on compact $u$-sets,
		\[
		g_{\sigma,n}(\Delta_n u)\to \frac{2}{\sigma}w(u),
		\qquad
		g_{H,n}(\Delta_n u)
		=
		w(u)\bigl(2\Ln+b-2\log|u|\bigr)+o(1),
		\]
		where
		\[
		w(u)=\frac{1}{1+A^{-1}|u|^\rho}.
		\]
		Moreover, after separating the explicit leading $\Ln$-term, the remaining part is dominated by a multiple of
		$w(u)^2(1+|\log|u||)$, which is integrable because $\rho>1/2$. Hence dominated convergence applies to the constant-order term, while the leading logarithmic contribution is carried by $J_0$. Therefore
		\[
		g_{\sigma,n}(\Delta_n u)g_{H,n}(\Delta_n u)
		=
		\frac{2}{\sigma}w(u)^2\bigl(2\Ln+b-2\log|u|\bigr)+o(1),
		\]
		and consequently
		\[
		\int_{-\pi}^{\pi}g_{\sigma,n}(\lambda)g_{H,n}(\lambda)\,\dd\lambda
		=
		\Delta_n
		\left(
		\frac{4}{\sigma}\Ln J_0
		+
		\frac{2}{\sigma}(bJ_0-2J_1)
		\right)
		+o(\Delta_n),
		\]
		where
		\[
		J_0:=\int_{\R}\frac{1}{(1+A^{-1}|u|^\rho)^2}\,\dd u,
		\qquad
		J_1:=\int_{\R}\frac{\log|u|}{(1+A^{-1}|u|^\rho)^2}\,\dd u.
		\]
		This proves the asymptotic relation
		\[
		\tr(C_{\sigma,n}D_{H,n})
		\sim
		(n\Delta_n\Ln)\frac{2}{\pi\sigma}J_0.
		\]
		Finally, by dividing the preceding expansion by the corresponding one for
		$\tr(C_{\sigma,n}^2)$, we obtain
		\[
		a_n
		=
		\sigma\Bigl(\Ln+\frac12b-\frac{J_1}{J_0}\Bigr)+o(1).
		\]
		It only remains to identify the ratio $J_1/J_0$. By the change of variables
		$t=|u|^\rho/A$,
		\[
		\frac{J_1}{J_0}
		=
		\frac{1}{\rho}\Bigl(\log A+\Psi(1/\rho)-\Psi(2-1/\rho)\Bigr)
		=:m.
		\]
		Therefore
		\[
		a_n=\sigma\bigl(\Ln+\frac12b-m\bigr)+o(1),
		\]
		which is exactly \eqref{eq:an-expansion}. The expression for $b$ in \eqref{eq:b-m} is just
		\[
		b=\partial_H\log C(H)
		=2\Psi(2H+1)+\pi\cot(\pi H).
		\]
		The proof is complete.
	\end{proof}
	
	\begin{lemma}\label{lem:H-orth}
		For every $n$,
		\[
		\tr(C_{\sigma,n} D_{H,n}^{\perp})=0,
		\qquad
		\Cov(S_{\sigma,n},R_{H,n}^{\perp})=0.
		\]
	\end{lemma}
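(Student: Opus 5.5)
This is an exact finite-$n$ identity, so no asymptotics are needed. It follows from the definition of $a_n$ in \eqref{eq:an-def} together with the covariance formula for centered Gaussian quadratic forms. We first check that $a_n$ is well defined, which requires $\tr(C_{\sigma,n}^2)>0$. Since $h_{\Delta_n}=\frac{2}{\sigma}f^{(H)}_{\Delta_n}$ is nonnegative and not identically zero, the Toeplitz matrix $T_n(h_{\Delta_n})$ is positive definite. Hence $C_{\sigma,n}=\Sigma_n^{-1/2}T_n(h_{\Delta_n})\Sigma_n^{-1/2}$ is a nonzero symmetric matrix, and $\tr(C_{\sigma,n}^2)=\norm{C_{\sigma,n}}_{\F}^2>0$.

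The trace identity then follows from linearity of the trace:
\[
\tr(C_{\sigma,n}D_{H,n}^{\perp})=\tr(C_{\sigma,n}D_{H,n})-a_n\tr(C_{\sigma,n}^2)=0,
\]
by the very choice of $a_n$.

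For the covariance, we use the polarization of \eqref{eq:varofGQF}. For real symmetric $A,B$ and $Z_n\sim\mathcal N(0,I_n)$ we have $\Cov(Q_n(A),Q_n(B))=2\tr(AB)$. To justify this, apply \eqref{eq:varofGQF} to $A+B$ and to $A-B$, use the bilinearity of $Q_n$, and note that
\[
\Cov(Q_n(A),Q_n(B))=\tfrac14\bigl(\Var Q_n(A+B)-\Var Q_n(A-B)\bigr)=\tfrac14\bigl(2\tr((A+B)^2)-2\tr((A-B)^2)\bigr)=2\tr(AB).
\]
Alternatively, one can obtain it directly from Isserlis' formula $\mathbb E[Z_iZ_jZ_kZ_l]=\delta_{ij}\delta_{kl}+\delta_{ik}\delta_{jl}+\delta_{il}\delta_{jk}$.

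The matrix $D_{H,n}^{\perp}$ is symmetric, because both $D_{H,n}$ and $C_{\sigma,n}$ are congruences by $\Sigma_n^{-1/2}$ of symmetric Toeplitz matrices with real even symbols. Combining \eqref{eq:sigma-score} with \eqref{eq:an-def} gives
\[
\Cov(S_{\sigma,n},R_{H,n}^{\perp})=\tfrac14\Cov\bigl(Q_n(C_{\sigma,n}),Q_n(D_{H,n}^{\perp})\bigr)=\tfrac12\tr(C_{\sigma,n}D_{H,n}^{\perp})=0.
\]

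The only point that needs any care is the polarization step, together with checking that all the matrices involved are symmetric so that \eqref{eq:varofGQF} applies. Beyond that the argument is purely algebraic, and it holds for every $n$.
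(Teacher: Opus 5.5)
Your proposal is correct and follows the paper's proof: the trace identity comes straight from the definition of $a_n$, and the covariance statement from $\Cov\bigl(\tfrac12Q_n(A),\tfrac12Q_n(B)\bigr)=\tfrac12\tr(AB)$. You add two checks the paper leaves implicit, namely $\tr(C_{\sigma,n}^2)>0$ via positive definiteness of $T_n(h_{\Delta_n})$ and the polarization derivation of the covariance formula, and both are fine.
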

	
	\begin{proof}
		The trace identity is immediate from \eqref{eq:an-def}. Since centered Gaussian quadratic forms satisfy
		\[
		\Cov\Big(\tfrac12Q_n(A),\tfrac12Q_n(B)\Big)=\frac12\tr(AB),
		\]
		we obtain the covariance statement.
	\end{proof}
	
	\begin{lemma}\label{lem:Hperp-trace}
		Assume the mild strengthening
		\begin{equation}\label{eq:mild-growth}
			\frac{\log^2 n}{n\Delta_n}\to0.
		\end{equation}
		Then
		\[
		\norm{D_{H,n}^{\perp}}_{\op}=O(\log n),
		\qquad
		\tr\big((D_{H,n}^{\perp})^2\big)
		\sim 2(n\Delta_n)I_{HH}^{\perp}(\sigma,H),
		\]
		where
		\begin{equation}\label{eq:IHHperp}
			I_{HH}^{\perp}(\sigma,H)
			=\frac1\pi\Big(J_2-\frac{J_1^2}{J_0}\Big),
			\qquad
			J_2:=\int_{\R}\frac{(\log|u|)^2}{(1+A^{-1}|u|^\rho)^2}\,\dd u.
		\end{equation}
		Equivalently,
		\begin{equation}\label{eq:IHHperp-closed}
			I_{HH}^{\perp}(\sigma,H)
			=\frac{J_0}{\pi\rho^2}
			\Big(\Psi_1(1/\rho)+\Psi_1(2-1/\rho)\Big),
		\end{equation}
		with $\Psi_1$ the trigamma function and $J_0$ in \eqref{eq:IHHrem}.
	\end{lemma}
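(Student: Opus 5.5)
The plan is to expand the square and use the orthogonality from Lemma~\ref{lem:H-orth}:
\[
\tr\big((D_{H,n}^{\perp})^2\big)=\tr(D_{H,n}^2)-2a_n\tr(C_{\sigma,n}D_{H,n})+a_n^2\tr(C_{\sigma,n}^2)
=\tr(D_{H,n}^2)-\frac{\tr(C_{\sigma,n}D_{H,n})^2}{\tr(C_{\sigma,n}^2)}.
\]
The leading orders $n\Delta_n\Ln^2$ cancel here. The asymptotics of Lemmas~\ref{lem:D-trace} and \ref{lem:an-expansion} are therefore too coarse, and the first task is to sharpen them to order $o(n\Delta_n)$.

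I would work at the symbol level. Set $g^\perp_n:=g_{H,n}-a_ng_{\sigma,n}$, so that $D_{H,n}^\perp=\Sigma_n^{-1/2}T_n(r_{\Delta_n}-a_nh_{\Delta_n})\Sigma_n^{-1/2}$ is the whitened Toeplitz matrix with ratio symbol $g^\perp_n$. By \eqref{eq:an-expansion}, on $\lambda=\Delta_nu$,
\[
g_n^\perp(\Delta_nu)=w(u)\big(2\Ln+b-2\log|u|\big)-2\big(\Ln+\tfrac12b-m\big)w(u)+o(1)=2w(u)\big(m-\log|u|\big)+o(1).
\]
The $\Ln$ terms cancel exactly at the symbol level, not only in the trace. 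I would then repeat the sandwich argument of Lemma~\ref{lem:C-trace} for the pair $(\Sigma_n,T_n(r_{\Delta_n}-a_nh_{\Delta_n}))$. This should give $\tr((D_{H,n}^\perp)^2)=\frac{n}{2\pi}\int (g_n^\perp)^2\,\dd\lambda+o(n\Delta_n)$. Dominated convergence with majorant $w(u)^2(1+\log^2|u|)$, integrable because $\rho>1/2$, then gives
\[
\int (g_n^\perp)^2\sim 4\Delta_n\int w^2(m-\log|u|)^2\,\dd u=4\Delta_n\Big(J_2-2mJ_1+m^2J_0\Big)=4\Delta_n\Big(J_2-\frac{J_1^2}{J_0}\Big),
\]
using $m=J_1/J_0$. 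Multiplying by $n/(2\pi)$ yields $2n\Delta_n I_{HH}^\perp$.

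For the operator norm, $|a_n|\lesssim\Ln$ and $\|C_{\sigma,n}\|_{\op}\le 2/\sigma$. I would avoid the triangle inequality, which only gives $O(\Ln+\log n)$. Instead I would use Remark~\ref{rem:gen-rayleigh-whitened} with the symbol $g_n^\perp$. On low frequencies $|g_n^\perp|\lesssim w(u)(1+|\log|u||)$. The effective cutoff $|u|\gtrsim (n\Delta_n)^{-1}$ from Lemma~\ref{lem:D-op} bounds this by $\log(n\Delta_n)\le\log n$. At high frequencies $|\lambda|\gg\Delta_n$, $w_{\Delta_n}$ decays like $(\Delta_n/|\lambda|)^\rho$, so $w\cdot\Ln$ stays bounded. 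Hence $\|D_{H,n}^\perp\|_{\op}=O(\log n)$.

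For the closed form \eqref{eq:IHHperp-closed}, substitute $t=|u|^\rho/A$. Then $J_k$ are derivatives in $s$ at $s=1/\rho$ of the Beta integral $\frac{2}{\rho}A^{s}\int_0^\infty t^{s-1}(1+t)^{-2}\,\dd t=\frac{2}{\rho}A^s\Gamma(s)\Gamma(2-s)$, with $\log|u|=\rho^{-1}(\log A+\log t)$. Then $J_2/J_0-(J_1/J_0)^2$ is $\rho^{-2}$ times the second derivative of $\log(\Gamma(s)\Gamma(2-s))$, which equals $\rho^{-2}(\Psi_1(1/\rho)+\Psi_1(2-1/\rho))$.

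The main obstacle is the error term. The sandwich remainders in Lemma~\ref{lem:D-trace} were only $o(n\Delta_n\Ln^2)$, which is useless after the cancellation. I would first try to apply the approximation directly to the combined symbol $r_{\Delta_n}-a_nh_{\Delta_n}$, whose ratio is $O(1+|\log|u||)$ rather than $O(\Ln)$, so the remainder bounds scale to $o(n\Delta_n)$. Condition \eqref{eq:mild-growth} would absorb the $\log n$ losses from unresolved frequencies below $1/n$. The $o(1)$ error in $a_n$ contributes only $o(1)\cdot\|C_{\sigma,n}\|_F^2=o(n\Delta_n)$.
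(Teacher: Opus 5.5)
Your proposal is correct and follows the paper's own proof: the $\Ln$ terms cancel inside the symbol $g_{H,n}-a_ng_{\sigma,n}$, the Toeplitz sandwich reduction is applied directly to $D_{H,n}^{\perp}$, and dominated convergence on the scale $\lambda=\Delta_nu$ gives $4\Delta_n(J_2-J_1^2/J_0)$. The operator norm likewise comes from the Rayleigh-quotient bound with the $1/n$ frequency cutoff, and the trigamma form is the second $s$-derivative of $\log(\Gamma(s)\Gamma(2-s))$ at $s=1/\rho$. You are right that the paper's alternative of inserting Lemmas~\ref{lem:D-trace} and \ref{lem:an-expansion} is too coarse after the cancellation, so the direct sandwich route does the real work. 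However, the triangle inequality would not lose anything for the operator norm: $n\Delta_n\to\infty$ forces $\Ln<\log n$, so $O(\Ln+\log n)=O(\log n)$ already.
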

	
	\begin{proof}
		Set $\widetilde g_n(\lambda):=g_{H,n}(\lambda)-a_ng_{\sigma,n}(\lambda)$. Since $D_{H,n}^{\perp}=D_{H,n}-a_nC_{\sigma,n}$, the Rayleigh-quotient argument used for $D_{H,n}$ gives
		\[
		\norm{D_{H,n}^{\perp}}_{\op}\le \sup_{\lambda}|\widetilde g_n(\lambda)|.
		\]
		Using lemma \ref{lem:an-expansion}, for $\lambda=\Delta_nu$ one has
		\[
		\widetilde g_n(\Delta_nu)
		=w(u)\Bigl(-2\log|u|+2m+o(1)\Bigr),
		\]
		so the leading $2\Ln+b$ terms cancel. To bound the remaining profile, recall that frequencies below order $1/n$ are not resolved in a size-$n$ Toeplitz form. Hence on the effective low frequency region
		\[
		|u|=|\lambda|/\Delta_n\gtrsim (n\Delta_n)^{-1},
		\]
		we have $|\log|u||\lesssim \Ln+\log n$. Since $0\le w(u)\le 1$ and the centered combination eliminates the $\Ln$ contribution, this gives
		\[
		\sup_{\lambda\in[-\pi,\pi]}|\widetilde g_n(\lambda)|\le C(1+\log n),
		\]
		and therefore $\norm{D_{H,n}^{\perp}}_{\op}=O(\log n)$.\\
		For the trace, expand
		\[
		\tr((D_{H,n}^{\perp})^2)=\tr(D_{H,n}^2)-2a_n\tr(C_{\sigma,n} D_{H,n})+a_n^2\tr(C_{\sigma,n}^2).
		\]
		Insert the asymptotics from lemmas \ref{lem:C-asym}, \ref{lem:D-trace} and \ref{lem:an-expansion}. After dividing by $n\Delta_n$, the $\Ln^2$ and $\Ln$ contributions cancel exactly, and only the centered logarithmic term remains. Equivalently, the Toeplitz sandwich reduction applied directly to $D_{H,n}^{\perp}$ yields
		\[
		\tr((D_{H,n}^{\perp})^2)=\frac{n}{2\pi}\int_{-\pi}^{\pi}\widetilde g_n(\lambda)^2\,\dd\lambda+o(n\Delta_n).
		\]
		Passing to the scale $\lambda=\Delta_nu$ and using the limit above gives
		\[
		\int_{-\pi}^{\pi}\widetilde g_n(\lambda)^2\,\dd\lambda
		\sim 4\Delta_n\int_{\mathbb R}w(u)^2(\log|u|-m)^2\,\dd u
		=4\Delta_n\Bigl(J_2-\frac{J_1^2}{J_0}\Bigr).
		\]
		This proves \eqref{eq:IHHperp}. The trigamma form follows by differentiating the beta-function expression for $J_0$ twice with respect to $1/\rho$.
	\end{proof}
	
	\begin{proposition}\label{prop:Hperp-clt}
		Assume \eqref{eq:mild-growth}. Then
		\[
		\frac{R_{H,n}^{\perp}}{\sqrt{n\Delta_n}}\dto \mathcal{N}\big(0,I_{HH}^{\perp}(\sigma,H)\big).
		\]
	\end{proposition}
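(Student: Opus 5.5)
This follows the pattern of Proposition~\ref{prop:sigma-clt}: apply Lemma~\ref{lem:qf-clt} to the symmetric matrix $D_{H,n}^{\perp}$ and read off the variance from Lemma~\ref{lem:Hperp-trace}. By \eqref{eq:an-def}, $R_{H,n}^{\perp}=\frac12Q_n(D_{H,n}^{\perp})$. Here $D_{H,n}^{\perp}=D_{H,n}-a_nC_{\sigma,n}$ is real symmetric, since both $C_{\sigma,n}$ and $D_{H,n}$ are congruence transforms of symmetric Toeplitz matrices by $\Sigma_n^{-1/2}$. So the whole task reduces to checking the operator-to-Frobenius ratio condition and identifying the normalizing constant.

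The first step is the ratio condition. Lemma~\ref{lem:Hperp-trace}, under \eqref{eq:mild-growth}, gives
\[
\norm{D_{H,n}^{\perp}}_{\op}=O(\log n),\qquad \norm{D_{H,n}^{\perp}}_{\F}^2=\tr\big((D_{H,n}^{\perp})^2\big)\sim 2(n\Delta_n)I_{HH}^{\perp}(\sigma,H).
\]
Hence
\[
\frac{\norm{D_{H,n}^{\perp}}_{\op}^2}{\tr((D_{H,n}^{\perp})^2)}\lesssim \frac{\log^2 n}{n\Delta_n}\to0,
\]
which is exactly \eqref{eq:mild-growth}. For this ratio to be meaningful we need $I_{HH}^{\perp}(\sigma,H)>0$. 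That follows from \eqref{eq:IHHperp}: by Cauchy--Schwarz with respect to the weight $w(u)^2\,\dd u$, we have $J_1^2\le J_0J_2$, with strict inequality because $\log|u|$ is not a.e.\ constant. Alternatively, it follows from the closed form \eqref{eq:IHHperp-closed}, since the trigamma function is positive.

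The second step applies Lemma~\ref{lem:qf-clt}, which gives $Q_n(D_{H,n}^{\perp})/\sqrt{2\tr((D_{H,n}^{\perp})^2)}\dto\mathcal N(0,1)$. Writing
\[
\frac{R_{H,n}^{\perp}}{\sqrt{n\Delta_n}}=\frac{Q_n(D_{H,n}^{\perp})}{\sqrt{2\tr((D_{H,n}^{\perp})^2)}}\cdot\frac{\sqrt{2\tr((D_{H,n}^{\perp})^2)}}{2\sqrt{n\Delta_n}},
\]
the second factor converges to $\sqrt{4(n\Delta_n)I_{HH}^{\perp}}/(2\sqrt{n\Delta_n})=\sqrt{I_{HH}^{\perp}}$. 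Slutsky's lemma then yields the limit $\mathcal N(0,I_{HH}^{\perp}(\sigma,H))$.

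The only real content is the trace asymptotic in Lemma~\ref{lem:Hperp-trace}, in particular the exact cancellation of the $\Ln^2$ and $\Ln$ orders. That cancellation requires $a_n$ to be known up to $o(1)$, which Lemma~\ref{lem:an-expansion} supplies. Since all of this is already established, the proposition is immediate and no further obstacle arises beyond citing these lemmas correctly.
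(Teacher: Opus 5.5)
Your proposal is correct and is essentially the paper's proof. Both arguments apply Lemma~\ref{lem:qf-clt} to $D_{H,n}^{\perp}$, using $\norm{D_{H,n}^{\perp}}_{\op}=O(\log n)$ and $\tr((D_{H,n}^{\perp})^2)\sim 2(n\Delta_n)I_{HH}^{\perp}$ from Lemma~\ref{lem:Hperp-trace}, so that the ratio is $\lesssim \log n/\sqrt{n\Delta_n}\to0$ under \eqref{eq:mild-growth}. The paper leaves implicit your Cauchy--Schwarz check that $I_{HH}^{\perp}>0$ and your Slutsky identification of the limiting variance; both are correct.
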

	
	\begin{proof}
		By lemma \ref{lem:Hperp-trace},
		\[
		\frac{\norm{D_{H,n}^{\perp}}_{\op}}{\norm{D_{H,n}^{\perp}}_{\mathrm F}}
		\lesssim \frac{\log n}{\sqrt{n\Delta_n}}\to0.
		\]
		The proposition follows from lemma \ref{lem:qf-clt}.
	\end{proof}
	
	\subsection{\texorpdfstring{The $\alpha$-direction of the score functions and the second projection}{The alpha-direction of the score functions and the second projection}}
	We then analyze the $\alpha$-score. Although this direction has no logarithmic blow-up, it still carries nontrivial correlations, so we introduce a second projection, identify the corresponding Schur-complement information, and prove the CLT for the projected score $S_{\alpha,n}^{\perp}$.\\
	The $\alpha$-score has no logarithmic explosion. Define
	\[
	g_{\alpha,n}(\lambda):=\frac{\partial_\alpha f_{\Delta_n}(\lambda)}{f_{\Delta_n}(\lambda)}.
	\]
	On the scale $\lambda=\Delta_nu$, the common OU denominator contributes the main $\alpha$-dependence, hence
	\[
	g_{\alpha,n}(\Delta_nu)=-\frac{2\alpha}{\alpha^2+u^2}+o(1).
	\]
	This gives the same order $n\Delta_n$ as in the projected $H$-direction.
	
	\begin{lemma} \label{lem:alpha-raw}
		Under $H>3/4$ and $T_n\to\infty$,
		\[
		\norm{A_{\alpha,n}}_{\op}=O(1),
		\qquad
		\tr(A_{\alpha,n}^2)
		=\frac{n}{2\pi}\int_{-\pi}^{\pi}g_{\alpha,n}(\lambda)^2\,\dd\lambda+o(n\Delta_n),
		\]
		and
		\[
		\tr(A_{\alpha,n}^2)\sim \frac{n\Delta_n}{\alpha}.
		\]
		Consequently,
		\begin{equation}\label{eq:Iaa}
			I_{\alpha\alpha}(\alpha)=\frac{1}{2\alpha},
			\qquad
			\frac{S_{\alpha,n}}{\sqrt{n\Delta_n}}\dto \mathcal{N}\Big(0,\frac{1}{2\alpha}\Big).
		\end{equation}
	\end{lemma}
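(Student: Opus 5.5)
The proof will follow the template of Lemmas~\ref{lem:C-op}--\ref{lem:C-asym} and Proposition~\ref{prop:sigma-clt}, applied now to the symbol ratio $g_{\alpha,n}=\partial_\alpha f_{\Delta_n}/f_{\Delta_n}$. There are four steps: an operator-norm bound, reduction of the trace to a spectral integral, evaluation of that integral at low frequencies, and the quadratic-form CLT.

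\emph{Operator norm.} By Remark~\ref{rem:gen-rayleigh-whitened},
\[
\norm{A_{\alpha,n}}_{\op}=\sup_{x\ne0}\frac{|x^\top T_n(\partial_\alpha f_{\Delta_n})x|}{x^\top T_n(f_{\Delta_n})x}\le\sup_\lambda|g_{\alpha,n}(\lambda)|,
\]
so it suffices to show that $g_{\alpha,n}$ is bounded uniformly in $n$. I will split $\partial_\alpha f_\Delta=\partial_\alpha f^{(H)}_\Delta+\partial_\alpha f^{(W)}_\Delta$ and bound each ratio $|\partial_\alpha f^{(\cdot)}_\Delta|/f^{(\cdot)}_\Delta$ separately; since $f^{(\cdot)}_\Delta\le f_\Delta$, this bounds $g_{\alpha,n}$.
\begin{itemize}
\item For $f^{(H)}$, each summand of $S_{H,\Delta}$ satisfies $\bigl|\partial_\alpha\log\bigl((\alpha\Delta)^2+(\lambda+2\pi k)^2\bigr)^{-1}\bigr|=\frac{2\alpha\Delta^2}{(\alpha\Delta)^2+(\lambda+2\pi k)^2}\le \frac{2}{\alpha}$. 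Hence $|\partial_\alpha f^{(H)}_\Delta|\le (2/\alpha)f^{(H)}_\Delta$.
\item For $f^{(W)}$, differentiate \eqref{eq:fW} directly. The prefactor contributes $O(1/\alpha)$ uniformly. The denominator $(1-e^{-\alpha\Delta})^2+2e^{-\alpha\Delta}(1-\cos\lambda)$ has $\alpha$-derivative of size $\Delta(1-e^{-\alpha\Delta})+\Delta(1-\cos\lambda)$. After dividing by the denominator, this is $O(1/\alpha)$ when $|\lambda|\lesssim\Delta$ and $O(\Delta)$ otherwise.
\end{itemize}
This gives $\norm{A_{\alpha,n}}_{\op}\le C(\alpha)$.

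\emph{Trace reduction.} I will reuse the Toeplitz sandwich of Lemma~\ref{lem:C-trace}. Write $\Sigma_n^{-1}T_n(\partial_\alpha f_{\Delta_n})=T_n(g_{\alpha,n})+R_n$ with $\norm{R_n}_{\op}=o(1)$ and $\norm{R_n}_{\F}=o((n\Delta_n)^{1/2})$, taken from the same low-frequency truncation argument of \cite{Cai2026}. This gives $\tr(A_{\alpha,n}^2)=\tr(T_n(g_{\alpha,n})^2)+o(n\Delta_n)$. The Fejér identity then replaces this trace by $\frac{n}{2\pi}\int g_{\alpha,n}^2$, up to $o(n\Delta_n)$.

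The Fejér step is where I expect the main obstacle. The symbol $g_{\alpha,n}$ is not small away from $|\lambda|\asymp\Delta_n$ in the $L^2$ sense unless one checks it; one needs $\int_{|\lambda|\gg\Delta_n}g_{\alpha,n}^2=o(\Delta_n)$. From the bound above, $g_{\alpha,n}=O(\Delta_n+\Delta_n^2/\lambda^2)$ on that region. Its square therefore integrates to $O(\Delta_n^2)+O(\Delta_n^3/\Delta_n)\cdot o(1)$, which is acceptable. The Fejér smoothing error near $|\lambda|\asymp\Delta_n$ is controlled by the Lipschitz scale of $g_{\alpha,n}$, which is $1/\Delta_n$, against the kernel width $1/n$. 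This gives a relative error of $O(1/(n\Delta_n))\to0$.

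\emph{Evaluation and CLT.} At low frequency, the ratio $f^{(H)}/f^{(W)}$ loses its $\alpha$-dependence, exactly as in Lemma~\ref{lem:C-asym}. Both spectral pieces carry the common factor $((\alpha\Delta)^2+\lambda^2)^{-1}$. Using $(1-e^{-2\alpha\Delta})/(2\alpha)\approx\Delta$, the Brownian piece is $\approx \Delta^2/((\alpha\Delta)^2+\lambda^2)$. Hence $g_{\alpha,n}(\Delta_nu)\to-2\alpha/(\alpha^2+u^2)$, with domination by $C/(\alpha^2+u^2)$. Dominated convergence then gives
\[
\int_{-\pi}^{\pi} g_{\alpha,n}^2\,\dd\lambda\sim\Delta_n\int_\R\frac{4\alpha^2}{(\alpha^2+u^2)^2}\dd u=\Delta_n\cdot 4\alpha^2\cdot\frac{\pi}{2\alpha^3}=\frac{2\pi\Delta_n}{\alpha}.
\]
It follows that $\tr(A_{\alpha,n}^2)\sim n\Delta_n/\alpha$, so $\Var(S_{\alpha,n})=\frac12\tr(A_{\alpha,n}^2)\sim n\Delta_n/(2\alpha)$. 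Finally, $\norm{A_{\alpha,n}}_{\op}^2/\tr(A_{\alpha,n}^2)=O(1/(n\Delta_n))\to0$, so Lemma~\ref{lem:qf-clt} together with \eqref{eq:alpha-score} yields \eqref{eq:Iaa}.
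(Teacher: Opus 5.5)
Your proposal is correct and follows essentially the same route as the paper: the Rayleigh-quotient bound $\norm{A_{\alpha,n}}_{\op}\le\sup_\lambda|g_{\alpha,n}(\lambda)|$, the Toeplitz-sandwich and Fej\'er reduction of $\tr(A_{\alpha,n}^2)$ to $\frac{n}{2\pi}\int g_{\alpha,n}^2$, the rescaling $\lambda=\Delta_n u$ with limit profile $-2\alpha/(\alpha^2+u^2)$ and $\int_{\R}4\alpha^2(\alpha^2+u^2)^{-2}\,\dd u=2\pi/\alpha$, and then Lemma~\ref{lem:qf-clt}. Your component-wise check that $g_{\alpha,n}$ is bounded and your tail estimate only make explicit what the paper asserts; one slip is that the tail integral over $|\lambda|>M\Delta_n$ is $O(\Delta_n^2)+O(\Delta_n/M^3)$, not what you wrote, though the conclusion $o(\Delta_n)$ is unchanged.
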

	\begin{proof}
		We first prove the operator bound by writing $A_{\alpha,n}$ through the corresponding
		generalized Rayleigh quotient. By remark \ref{rem:gen-rayleigh-whitened},
		\[
		\norm{A_{\alpha,n}}_{\op}
		=
		\sup_{y\neq 0}
		\frac{|y^\top T_n(\partial_\alpha f_{\Delta_n})y|}{y^\top T_n(f_{\Delta_n})y}
		\le
		\sup_{\lambda\in[-\pi,\pi]}
		\left|
		\frac{\partial_\alpha f_{\Delta_n}(\lambda)}{f_{\Delta_n}(\lambda)}
		\right|
		=
		\sup_{\lambda\in[-\pi,\pi]}|g_{\alpha,n}(\lambda)|.
		\]
		Because differentiation in $\alpha$ acts only on the OU filter, the low frequency denominator $(\alpha\Delta_n)^2+\lambda^2$ is differentiated into a bounded rational factor. Hence
		\[
		\sup_{\lambda\in[-\pi,\pi]}|g_{\alpha,n}(\lambda)|\le C,
		\]
		so that
		\[
		\norm{A_{\alpha,n}}_{\op}=O(1).
		\]
		
		We next turn to the trace. By the same Toeplitz sandwich reduction used earlier in the
		paper, one obtains
		\[
		\tr(A_{\alpha,n}^2)
		=
		\tr\bigl(T_n(g_{\alpha,n})^2\bigr)+o(n\Delta_n).
		\]
		Since $g_{\alpha,n}$ is uniformly bounded and localized on the scale $|\lambda|\asymp\Delta_n$, the Fej\'er-kernel identity gives
		\[
		\tr\bigl(T_n(g_{\alpha,n})^2\bigr)
		=
		\frac{n}{2\pi}\int_{-\pi}^{\pi}g_{\alpha,n}(\lambda)^2\,\dd\lambda
		+o(n\Delta_n),
		\] 
		see \cite{Avram1988,BottcherSilbermann2006,Gray2006} and therefore
		\[
		\tr(A_{\alpha,n}^2)
		=
		\frac{n}{2\pi}\int_{-\pi}^{\pi}g_{\alpha,n}(\lambda)^2\,\dd\lambda
		+o(n\Delta_n).
		\]
		It remains to evaluate the integral asymptotically. Put $\lambda=\Delta_n u$, one has
		\[
		g_{\alpha,n}(\Delta_n u)
		=
		-\frac{2\alpha}{\alpha^2+u^2}+o(1),
		\]
		whereas the contribution from $|\lambda|\gg\Delta_n$ is negligible. Consequently,
		\[
		\int_{-\pi}^{\pi}g_{\alpha,n}(\lambda)^2\,\dd\lambda
		\sim
		\Delta_n\int_{\R}\frac{4\alpha^2}{(\alpha^2+u^2)^2}\,\dd u
		=
		\Delta_n\cdot\frac{2\pi}{\alpha},
		\]
		which yields
		\[
		\tr(A_{\alpha,n}^2)\sim \frac{n\Delta_n}{\alpha}.
		\]
		Finally, since
		\[
		\norm{A_{\alpha,n}}_{\op}=O(1),
		\qquad
		\norm{A_{\alpha,n}}_{\F}^2=\tr(A_{\alpha,n}^2)\asymp n\Delta_n\to\infty,
		\]
		we have
		\[
		\frac{\norm{A_{\alpha,n}}_{\op}}{\norm{A_{\alpha,n}}_{\F}}\to0.
		\]
		Applying lemma \ref{lem:qf-clt} to the centered quadratic form associated with
		$A_{\alpha,n}$, we obtain
		\[
		\frac{S_{\alpha,n}}{\sqrt{n\Delta_n}}
		\dto
		\mathcal{N}\Bigl(0,\frac{1}{2\alpha}\Bigr).
		\]
		Equivalently,
		\[
		I_{\alpha\alpha}(\alpha)=\frac{1}{2\alpha},
		\]
		which is exactly \eqref{eq:Iaa}.
	\end{proof}
	
	The point, however, is not the raw CLT but the diagonalized one. Because $S_{\alpha,n}$ still correlates with $(S_{\sigma,n},R_{H,n}^{\perp})$, we project again.\\
	Define
	\begin{equation}\label{eq:alpha-proj-coeff}
		b_{\sigma,n}:=\frac{\tr(A_{\alpha,n}C_{\sigma,n})}{\tr(C_{\sigma,n}^2)},
		\qquad
		b_{H,n}:=\frac{\tr(A_{\alpha,n}D_{H,n}^{\perp})}{\tr((D_{H,n}^{\perp})^2)},
	\end{equation}
	and
	\begin{equation}\label{eq:alpha-proj}
		A_{\alpha,n}^{\perp}:=A_{\alpha,n}-b_{\sigma,n}C_{\sigma,n}-b_{H,n}D_{H,n}^{\perp},
		\qquad
		S_{\alpha,n}^{\perp}:=\frac12Q_n(A_{\alpha,n}^{\perp}).
	\end{equation}
	
	\begin{lemma}\label{lem:alpha-orth}
		For every $n$,
		\[
		\tr(A_{\alpha,n}^{\perp}C_{\sigma,n})=0,
		\qquad
		\tr(A_{\alpha,n}^{\perp}D_{H,n}^{\perp})=0.
		\]
		Therefore,
		\[
		\Cov(S_{\alpha,n}^{\perp},S_{\sigma,n})=0,
		\qquad
		\Cov(S_{\alpha,n}^{\perp},R_{H,n}^{\perp})=0.
		\]
	\end{lemma}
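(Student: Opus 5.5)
The plan is a direct computation with the trace inner product $\langle A,B\rangle:=\tr(AB)$ on real symmetric $n\times n$ matrices, exactly as in Lemma~\ref{lem:H-orth}. The key input is Lemma~\ref{lem:H-orth} itself: it gives $\tr(C_{\sigma,n}D_{H,n}^{\perp})=0$, so $C_{\sigma,n}$ and $D_{H,n}^{\perp}$ are orthogonal. Consequently the two coefficients $b_{\sigma,n}$ and $b_{H,n}$ in \eqref{eq:alpha-proj-coeff} are precisely the Gram--Schmidt coefficients of $A_{\alpha,n}$ along two orthogonal directions, and the subtraction in \eqref{eq:alpha-proj} is an orthogonal projection.

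\textbf{Step 1: orthogonality to $C_{\sigma,n}$.} By linearity of the trace,
\[
\tr(A_{\alpha,n}^{\perp}C_{\sigma,n})=\tr(A_{\alpha,n}C_{\sigma,n})-b_{\sigma,n}\tr(C_{\sigma,n}^2)-b_{H,n}\tr(D_{H,n}^{\perp}C_{\sigma,n}).
\]
The first two terms cancel by the definition of $b_{\sigma,n}$, and the third vanishes by Lemma~\ref{lem:H-orth}.

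\textbf{Step 2: orthogonality to $D_{H,n}^{\perp}$.} Similarly,
\[
\tr(A_{\alpha,n}^{\perp}D_{H,n}^{\perp})=\tr(A_{\alpha,n}D_{H,n}^{\perp})-b_{\sigma,n}\tr(C_{\sigma,n}D_{H,n}^{\perp})-b_{H,n}\tr((D_{H,n}^{\perp})^2).
\]
The middle term vanishes by Lemma~\ref{lem:H-orth}, and the remaining two cancel by the definition of $b_{H,n}$.

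\textbf{Step 3: covariances.} All the matrices involved are real symmetric, since they are congruences $\Sigma_n^{-1/2}T_n(\cdot)\Sigma_n^{-1/2}$ of symmetric Toeplitz matrices with real even symbols, together with linear combinations of these. Hence the identity $\Cov(\tfrac12Q_n(A),\tfrac12Q_n(B))=\tfrac12\tr(AB)$ from the proof of Lemma~\ref{lem:H-orth} applies, and the two trace identities immediately give the two vanishing covariances.

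\textbf{Main obstacle.} The only point needing care is that the coefficients are well defined, that is, that the denominators $\tr(C_{\sigma,n}^2)$ and $\tr((D_{H,n}^{\perp})^2)$ are nonzero. The first is positive for every $n$, because $h_{\Delta_n}\ge0$ is not identically zero and so $C_{\sigma,n}\neq0$. For the second, Lemma~\ref{lem:Hperp-trace} gives $\tr((D_{H,n}^{\perp})^2)\sim 2n\Delta_nI_{HH}^{\perp}>0$, so it is nonzero for all large $n$. If some $D_{H,n}^{\perp}$ happened to vanish, one would set $b_{H,n}=0$, and both identities would still hold trivially. Under this convention the statement holds for every $n$.
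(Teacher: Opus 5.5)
Your proposal is correct and is essentially the paper's proof. You expand $\tr(A_{\alpha,n}^{\perp}C_{\sigma,n})$ and $\tr(A_{\alpha,n}^{\perp}D_{H,n}^{\perp})$ by linearity, cancel terms using the definitions of $b_{\sigma,n},b_{H,n}$ together with $\tr(C_{\sigma,n}D_{H,n}^{\perp})=0$ from Lemma~\ref{lem:H-orth}, and then apply $\Cov(\tfrac12Q_n(A),\tfrac12Q_n(B))=\tfrac12\tr(AB)$. Your extra remark on the denominators addresses a point the paper ignores, and it is needed: for $n=1$ all matrices are scalars, so $D_{H,1}^{\perp}=0$ identically, and the convention $b_{H,n}=0$ is required for the claim ``for every $n$''.
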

	
	\begin{proof}
		Using \eqref{eq:alpha-proj} and the already proved identity $\tr(C_{\sigma,n} D_{H,n}^{\perp})=0$ from lemma \ref{lem:H-orth}, we obtain
		\[
		\tr(A_{\alpha,n}^{\perp}C_{\sigma,n})
		=\tr(A_{\alpha,n}C_{\sigma,n})-b_{\sigma,n}\tr(C_{\sigma,n}^2)-b_{H,n}\tr(D_{H,n}^{\perp}C_{\sigma,n})=0,
		\]
		because $b_{\sigma,n}=\tr(A_{\alpha,n}C_{\sigma,n})/\tr(C_{\sigma,n}^2)$. Similarly,
		\[
		\tr(A_{\alpha,n}^{\perp}D_{H,n}^{\perp})
		=\tr(A_{\alpha,n}D_{H,n}^{\perp})-b_{H,n}\tr((D_{H,n}^{\perp})^2)-b_{\sigma,n}\tr(C_{\sigma,n} D_{H,n}^{\perp})=0.
		\]
		The covariance identities follow from
		\[
		\Cov\Big(\frac12Q_n(A),\frac12Q_n(B)\Big)=\frac12\tr(AB)
		\]
		for centered Gaussian quadratic forms.
	\end{proof}
	
	\begin{lemma}\label{lem:alpha-perp}
		Assume \eqref{eq:mild-growth}. Then $b_{\sigma,n}=O(1)$ and $b_{H,n}=O(1)$,
		\begin{equation}\label{eq:Aalpha-perp-trace-id}
			\tr\big((A_{\alpha,n}^{\perp})^2\big)
			=\tr(A_{\alpha,n}^2)-\frac{\tr(A_{\alpha,n}C_{\sigma,n})^2}{\tr(C_{\sigma,n}^2)}-\frac{\tr(A_{\alpha,n}D_{H,n}^{\perp})^2}{\tr((D_{H,n}^{\perp})^2)},
		\end{equation}
		and
		\[
		\norm{A_{\alpha,n}^{\perp}}_{\op}=O(\log n),
		\qquad
		\tr\big((A_{\alpha,n}^{\perp})^2\big)\asymp n\Delta_n.
		\]
		Hence
		\[
		\tr\big((A_{\alpha,n}^{\perp})^2\big)
		\sim 2(n\Delta_n)I_{\alpha\alpha}^{\perp}(\alpha,\sigma,H),
		\]
		with $I_{\alpha\alpha}^{\perp}$ given by \eqref{eq:Iaa-perp}.
	\end{lemma}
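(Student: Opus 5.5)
The plan mirrors the treatment of $D_{H,n}^{\perp}$ in lemma \ref{lem:Hperp-trace}: reduce every trace to a spectral integral through the Toeplitz sandwich approximation, and then pass to the low frequency scale $\lambda=\Delta_nu$.

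\textbf{Step 1: the trace identity.} Identity \eqref{eq:Aalpha-perp-trace-id} is purely algebraic. Expand $\tr((A_{\alpha,n}^{\perp})^2)$ using \eqref{eq:alpha-proj}, and use the orthogonality $\tr(C_{\sigma,n}D_{H,n}^{\perp})=0$ from lemma \ref{lem:H-orth}, so that the cross term between $C_{\sigma,n}$ and $D_{H,n}^{\perp}$ vanishes. Then insert the definitions \eqref{eq:alpha-proj-coeff} of $b_{\sigma,n}$ and $b_{H,n}$. This is the usual Gram--Schmidt Pythagoras identity in the Hilbert--Schmidt inner product.

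\textbf{Step 2: the coefficients.} Apply the sandwich reduction as in lemma \ref{lem:an-expansion} to obtain
\[
\tr(A_{\alpha,n}C_{\sigma,n})=\frac{n}{2\pi}\int_{-\pi}^{\pi} g_{\alpha,n}g_{\sigma,n}\,\dd\lambda+o(n\Delta_n),
\qquad
\tr(A_{\alpha,n}D_{H,n}^{\perp})=\frac{n}{2\pi}\int_{-\pi}^{\pi} g_{\alpha,n}\widetilde g_n\,\dd\lambda+o(n\Delta_n).
\]
On the scale $\lambda=\Delta_nu$ we have $g_{\alpha,n}\to-2\alpha/(\alpha^2+u^2)$, $g_{\sigma,n}\to(2/\sigma)w(u)$ and $\widetilde g_n\to 2w(u)(m-\log|u|)$. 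These limits are dominated by integrable functions, since $(\alpha^2+u^2)^{-1}(1+|\log|u||)$ is integrable. Hence both traces equal $n\Delta_n$ times explicit constants $K_\sigma$ and $K_H$.

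Because $\tr(C_{\sigma,n}^2)\sim 2n\Delta_nI_{\sigma\sigma}$ and $\tr((D_{H,n}^{\perp})^2)\sim 2n\Delta_nI_{HH}^{\perp}$ are both of exact order $n\Delta_n$, we get $b_{\sigma,n}=O(1)$ and $b_{H,n}=O(1)$. Their limits are $b_\sigma=K_\sigma/(2I_{\sigma\sigma})$ and $b_H=K_H/(2I_{HH}^{\perp})$.

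\textbf{Step 3: the operator bound.} By the triangle inequality,
\[
\norm{A_{\alpha,n}^{\perp}}_{\op}\le \norm{A_{\alpha,n}}_{\op}+|b_{\sigma,n}|\,\norm{C_{\sigma,n}}_{\op}+|b_{H,n}|\,\norm{D_{H,n}^{\perp}}_{\op}=O(1)+O(1)+O(\log n),
\]
using lemmas \ref{lem:alpha-raw}, \ref{lem:C-op} and \ref{lem:Hperp-trace}.

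\textbf{Step 4: the Frobenius norm.} Dividing \eqref{eq:Aalpha-perp-trace-id} by $n\Delta_n$ and using lemma \ref{lem:alpha-raw} gives
\[
\frac{\tr((A_{\alpha,n}^{\perp})^2)}{n\Delta_n}\to 2I_{\alpha\alpha}^{\perp},
\qquad
2I_{\alpha\alpha}^{\perp}=\frac1\alpha-\frac{K_\sigma^2}{2I_{\sigma\sigma}}-\frac{K_H^2}{2I_{HH}^{\perp}},
\]
which is the Schur complement defining \eqref{eq:Iaa-perp}.

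\textbf{Main obstacle: nondegeneracy.} The difficult part of $\tr((A_{\alpha,n}^{\perp})^2)\asymp n\Delta_n$ is the lower bound, i.e.\ $I_{\alpha\alpha}^{\perp}>0$. The limit equals $\frac{1}{\pi}$ times the squared $L^2(\R,\dd u)$ distance from $\phi_\alpha(u)=-2\alpha/(\alpha^2+u^2)$ to the span of $w(u)$ and $w(u)\log|u|$. (The sandwich reduction applied directly to $A_{\alpha,n}^{\perp}$ with symbol $g_{\alpha,n}-b_{\sigma,n}g_{\sigma,n}-b_{H,n}\widetilde g_n$ gives the same expression.) So it suffices to show that $\phi_\alpha$ is not a linear combination of $w$ and $w\log|u|$.

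I would argue by asymptotics as $|u|\to\infty$. There $\phi_\alpha\sim -2\alpha u^{-2}$, while $c_1w+c_2w\log|u|\sim A|u|^{-\rho}(c_1+c_2\log|u|)$. Since $\rho<1<2$, matching the decay forces $c_1=c_2=0$, and then $\phi_\alpha\equiv0$, a contradiction. Hence the Gram determinant of the three limit functions is positive, which gives $I_{\alpha\alpha}^{\perp}>0$ and therefore the $\asymp$ claim. The remaining risk is justifying the $o(n\Delta_n)$ sandwich errors for the $\log$-weighted symbol $\widetilde g_n$; I would inherit them from the proof of lemma \ref{lem:Hperp-trace}.
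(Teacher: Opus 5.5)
Your proposal is correct and follows the paper's proof essentially step for step: Toeplitz-sandwich reduction of the mixed traces to low-frequency integrals (giving $b_{\sigma,n},b_{H,n}=O(1)$), the Gram--Schmidt identity from $\tr(C_{\sigma,n}D_{H,n}^{\perp})=0$, the triangle inequality for the operator norm, and the Schur-complement limit. Your tail comparison $q_\alpha(u)\sim-2\alpha u^{-2}$ versus $w(u)\sim A|u|^{-\rho}$ usefully supplies the non-collinearity that the paper only asserts; the one slip is cosmetic, since with the normalization $\tr(T_n(g)^2)\approx\frac{n}{2\pi}\int g^2\,\dd\lambda$ the constant $I_{\alpha\alpha}^{\perp}$ is $\frac{1}{4\pi}$ (not $\frac1\pi$) times the squared $L^2(\R)$ distance, which does not affect positivity.
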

	
	\begin{proof}
		The relevant mixed traces all live on the same low frequency scale $|\lambda|\asymp\Delta_n$. More precisely, if
		\[
		\widetilde g_n(\lambda):=g_{H,n}(\lambda)-a_ng_{\sigma,n}(\lambda),
		\]
		then the same similarity reduction and Toeplitz sandwich argument as in Sections~3--4 gives
		\[
		\tr(A_{\alpha,n}C_{\sigma,n})
		=\frac{n}{2\pi}\int_{-\pi}^{\pi}g_{\alpha,n}(\lambda)g_{\sigma,n}(\lambda)\,\dd\lambda+o(n\Delta_n),
		\]
		\[
		\tr(A_{\alpha,n}D_{H,n}^{\perp})
		=\frac{n}{2\pi}\int_{-\pi}^{\pi}g_{\alpha,n}(\lambda)\widetilde g_n(\lambda)\,\dd\lambda+o(n\Delta_n).
		\]
		On the region $\lambda=\Delta_nu$ one has
		\[
		\begin{aligned}
			g_{\alpha,n}(\Delta_nu)\to q_\alpha(u):=-\frac{2\alpha}{\alpha^2+u^2},\quad
			g_{\sigma,n}(\Delta_nu)\to \frac{2}{\sigma}w(u),\quad
			\widetilde g_n(\Delta_nu)\to -2w(u)(\log|u|-m).
		\end{aligned}
		\]
		while outside that region the contribution is negligible. Since all three limiting profiles are square-integrable against each other, the normalized mixed traces have finite limits. In particular,
		\[
		\tr(A_{\alpha,n}^2),\quad \tr(C_{\sigma,n}^2),\quad \tr((D_{H,n}^{\perp})^2),\quad \tr(A_{\alpha,n}C_{\sigma,n}),\quad \tr(A_{\alpha,n}D_{H,n}^{\perp})=O(n\Delta_n),
		\]
		and therefore $b_{\sigma,n}=O(1)$ and $b_{H,n}=O(1)$.
		
		Expanding the square in \eqref{eq:alpha-proj} and using lemmas \ref{lem:H-orth} and \ref{lem:alpha-orth} gives the exact identity \eqref{eq:Aalpha-perp-trace-id}. Dividing by $2n\Delta_n$ yields the Schur-complement constant \eqref{eq:Iaa-perp}. Since $A_{\alpha,n}^{\perp}$ is the Hilbert--Schmidt residual after orthogonal projection of $A_{\alpha,n}$ onto ${\rm span}\{C_{\sigma,n},D_{H,n}^{\perp}\}$, the right-hand side of \eqref{eq:Aalpha-perp-trace-id} is nonnegative for every $n$. In the limit it is strictly positive because the profile $q_\alpha(u)$ is not contained in the span of $w(u)$ and $w(u)(\log|u|-m)$.
		
		For the operator norm, combine lemmas \ref{lem:alpha-raw}, \ref{lem:C-op} and \ref{lem:Hperp-trace} with the boundedness of $b_{\sigma,n}$ and $b_{H,n}$:
		\[
		\norm{A_{\alpha,n}^{\perp}}_{\op}
		\le \norm{A_{\alpha,n}}_{\op}+|b_{\sigma,n}|\norm{C_{\sigma,n}}_{\op}+|b_{H,n}|\norm{D_{H,n}^{\perp}}_{\op}
		=O(1)+O(1)+O(\log n)=O(\log n).
		\]
		This proves the lemma.
	\end{proof}
	
	\begin{remark}
		The projected constant is naturally identified through the Schur-complement formula
		\begin{equation}\label{eq:Iaa-perp}
			I_{\alpha\alpha}^{\perp}:=I_{\alpha\alpha}-\frac{I_{\alpha\sigma}^2}{I_{\sigma\sigma}}-\frac{(I_{\alpha H}^{\perp})^2}{I_{HH}^{\perp}},
		\end{equation}
		where
		\begin{equation}\label{eq:Ia-cross}
			I_{\alpha\sigma}:=\lim_{n\to\infty}\frac{1}{2n\Delta_n}\tr(A_{\alpha,n}C_{\sigma, n}),
			\qquad
			I_{\alpha H}^{\perp}:=\lim_{n\to\infty}\frac{1}{2n\Delta_n}\tr(A_{\alpha,n}D_{H,n}^{\perp}).
		\end{equation}
		This is exactly the three dimensional analogue of the two-parameter orthogonalization in the \cite{Cai2026}. The projected information is the raw $\alpha$-information minus the two squared projections onto the already diagonalized $(\sigma,H^{\perp})$ directions.
	\end{remark}
	
	\begin{proposition}\label{prop:alpha-perp-clt}
		Assume \eqref{eq:mild-growth}. Then
		\[
		\frac{S_{\alpha,n}^{\perp}}{\sqrt{n\Delta_n}}\dto \mathcal{N}\big(0,I_{\alpha\alpha}^{\perp}(\alpha,\sigma,H)\big).
		\]
	\end{proposition}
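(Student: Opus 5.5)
The proof will apply Lemma~\ref{lem:qf-clt} to the symmetric matrix $A_{\alpha,n}^{\perp}$ defined in \eqref{eq:alpha-proj}, exactly as Propositions~\ref{prop:Hperp-clt} and \ref{prop:sigma-clt} were obtained. First note that $A_{\alpha,n}^{\perp}$ is real symmetric, being a linear combination with real coefficients $b_{\sigma,n},b_{H,n}$ of the symmetric matrices $A_{\alpha,n}$, $C_{\sigma,n}$ and $D_{H,n}^{\perp}$. Also $S_{\alpha,n}^{\perp}=\frac12 Q_n(A_{\alpha,n}^{\perp})$ with $Z_n\sim\mathcal N(0,I_n)$ under $P_\theta$. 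So the whole task reduces to verifying the norm-ratio condition and identifying the limiting variance.

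For the ratio condition, Lemma~\ref{lem:alpha-perp} supplies $\norm{A_{\alpha,n}^{\perp}}_{\op}=O(\log n)$ and $\norm{A_{\alpha,n}^{\perp}}_{\F}^2=\tr((A_{\alpha,n}^{\perp})^2)\sim 2(n\Delta_n)I_{\alpha\alpha}^{\perp}$. Hence
\[
\frac{\norm{A_{\alpha,n}^{\perp}}_{\op}}{\norm{A_{\alpha,n}^{\perp}}_{\F}}\lesssim \frac{\log n}{\sqrt{n\Delta_n}}\to0
\]
by \eqref{eq:mild-growth}. This step needs $I_{\alpha\alpha}^{\perp}>0$, so that the Frobenius norm really is of exact order $(n\Delta_n)^{1/2}$ and not smaller. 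That positivity is asserted in Lemma~\ref{lem:alpha-perp} through the linear independence of $q_\alpha(u)=-2\alpha/(\alpha^2+u^2)$ from $w(u)$ and $w(u)(\log|u|-m)$ in $L^2(\R)$. I would make it explicit by a short argument: an identity $q_\alpha=c_1w+c_2w(\log|u|-m)$ is impossible. Indeed, $q_\alpha$ decays like $u^{-2}$ while $w$ decays like $|u|^{-\rho}$ with $\rho<1$, which forces $c_1=c_2=0$ by comparing tails, and $q_\alpha\neq 0$ gives the contradiction. The limiting Gram determinant is then positive, and so is the Schur complement $I_{\alpha\alpha}^{\perp}$.

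Lemma~\ref{lem:qf-clt} then gives
\[
\frac{Q_n(A_{\alpha,n}^{\perp})}{\sqrt{2\tr((A_{\alpha,n}^{\perp})^2)}}\dto\mathcal N(0,1).
\]
Multiplying by the deterministic factor $\sqrt{2\tr((A_{\alpha,n}^{\perp})^2)}/(2\sqrt{n\Delta_n})\to\sqrt{I_{\alpha\alpha}^{\perp}}$ and applying Slutsky yields $S_{\alpha,n}^{\perp}/\sqrt{n\Delta_n}\dto\mathcal N(0,I_{\alpha\alpha}^{\perp})$. As a consistency check, the variance identity \eqref{eq:varofGQF} gives $\Var(S_{\alpha,n}^{\perp})=\frac12\tr((A_{\alpha,n}^{\perp})^2)\sim n\Delta_n I_{\alpha\alpha}^{\perp}$, in agreement with this limit.

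The main obstacle is not the CLT step itself but the inputs from Lemma~\ref{lem:alpha-perp}: existence of the limits $I_{\alpha\sigma}$ and $I_{\alpha H}^{\perp}$ in \eqref{eq:Ia-cross}, which make the trace asymptotics a genuine $\sim$ rather than $\asymp$, and the strict positivity of $I_{\alpha\alpha}^{\perp}$. If only $\asymp$ were available, the normalized variance $\tr((A_{\alpha,n}^{\perp})^2)/(2n\Delta_n)$ would need to converge along the full sequence. I would secure this by dominated convergence on the scale $\lambda=\Delta_nu$ applied to the products $g_{\alpha,n}g_{\sigma,n}$ and $g_{\alpha,n}\widetilde g_n$, with dominating functions $w(u)/(\alpha^2+u^2)$ and $w(u)(1+|\log|u||)/(\alpha^2+u^2)$, both integrable. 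Combined with the exact identity \eqref{eq:Aalpha-perp-trace-id}, this gives convergence of the normalized variance to $I_{\alpha\alpha}^{\perp}$ from \eqref{eq:Iaa-perp}.
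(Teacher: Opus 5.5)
Your proposal is correct and is essentially the paper's own proof: you obtain the ratio bound $\norm{A_{\alpha,n}^{\perp}}_{\op}/\norm{A_{\alpha,n}^{\perp}}_{\F}\lesssim \log n/\sqrt{n\Delta_n}\to0$ from Lemma~\ref{lem:alpha-perp} together with \eqref{eq:mild-growth}, and then apply Lemma~\ref{lem:qf-clt}. Your additions are valid refinements of steps the paper only asserts inside Lemma~\ref{lem:alpha-perp} or leaves implicit: the tail-comparison argument showing $q_\alpha\notin\mathrm{span}\{w,\,w(\log|u|-m)\}$, which gives $I_{\alpha\alpha}^{\perp}>0$, and the explicit Slutsky rescaling of the variance to $I_{\alpha\alpha}^{\perp}$.
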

	
	\begin{proof}
		By lemma \ref{lem:alpha-perp},
		\[
		\frac{\norm{A_{\alpha,n}^{\perp}}_{\op}}{\norm{A_{\alpha,n}^{\perp}}_{\mathrm F}}
		\lesssim \frac{\log n}{\sqrt{n\Delta_n}}\to0.
		\]
		The proposition follows from lemma \ref{lem:qf-clt}.
	\end{proof}
	
	\subsection{Joint diagonalized central sequence}
	Finally, we collect the three orthogonalized score components $(S_{\sigma,n},
	R_{H,n}^{\perp},
	S_{\alpha,n}^{\perp})^\top$  into a single central sequence. The purpose of this subsection is to combine the previous one dimensional limits and exact orthogonality relations into a joint Gaussian limit with diagonal covariance. Collect the three diagonalized directions:
	\[
	\Xi_n:=\frac{1}{\sqrt{n\Delta_n}}
	\begin{pmatrix}
		S_{\sigma,n}\\[2mm]
		R_{H,n}^{\perp}\\[2mm]
		S_{\alpha,n}^{\perp}
	\end{pmatrix}.
	\]
	
	\begin{proposition}\label{prop:joint-clt}
		Assume \eqref{eq:mild-growth}. Then
		\[
		\Xi_n\dto N\big(0,\Iperp(\theta)\big),
		\qquad
		\Iperp(\theta):=\diag\Big(I_{\sigma\sigma},I_{HH}^{\perp},I_{\alpha\alpha}^{\perp}\Big).
		\]
	\end{proposition}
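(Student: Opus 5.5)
We will prove the joint limit with the Cram\'er--Wold device combined with Lemma~\ref{lem:qf-clt}. The key point is that a linear combination of centered Gaussian quadratic forms in the same vector $Z_n$ is again a centered Gaussian quadratic form, so no multivariate CLT is needed. Fix $t=(t_1,t_2,t_3)^\top\in\R^3\setminus\{0\}$ and set
\[
M_n(t):=\frac{1}{2\sqrt{n\Delta_n}}\Big(t_1C_{\sigma,n}+t_2D_{H,n}^{\perp}+t_3A_{\alpha,n}^{\perp}\Big).
\]
By \eqref{eq:sigma-score}, \eqref{eq:an-def} and \eqref{eq:alpha-proj}, together with linearity of $Q_n$ in its matrix argument, we have $t^\top\Xi_n=Q_n(M_n(t))$. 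The task is therefore to show $Q_n(M_n(t))\dto\mathcal N\big(0,t^\top\Iperp(\theta)t\big)$.

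\textbf{Step 1 (variance).} We compute $\tr(M_n(t)^2)$ exactly. The cross traces vanish identically in $n$: $\tr(C_{\sigma,n}D_{H,n}^{\perp})=0$ by Lemma~\ref{lem:H-orth}, and $\tr(A_{\alpha,n}^{\perp}C_{\sigma,n})=\tr(A_{\alpha,n}^{\perp}D_{H,n}^{\perp})=0$ by Lemma~\ref{lem:alpha-orth}. Hence
\[
2\tr(M_n(t)^2)=\frac{1}{2n\Delta_n}\Big(t_1^2\tr(C_{\sigma,n}^2)+t_2^2\tr((D_{H,n}^{\perp})^2)+t_3^2\tr((A_{\alpha,n}^{\perp})^2)\Big).
\]
By Lemmas~\ref{lem:C-asym}, \ref{lem:Hperp-trace} and \ref{lem:alpha-perp}, this converges to $t_1^2I_{\sigma\sigma}+t_2^2I_{HH}^{\perp}+t_3^2I_{\alpha\alpha}^{\perp}=t^\top\Iperp(\theta)t$. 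The limit is strictly positive because each diagonal constant is positive. For $I_{\alpha\alpha}^{\perp}$, positivity is the non-degeneracy statement at the end of the proof of Lemma~\ref{lem:alpha-perp}.

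\textbf{Step 2 (operator norm).} By the triangle inequality and Lemmas~\ref{lem:C-op}, \ref{lem:Hperp-trace} and \ref{lem:alpha-perp},
\[
\norm{M_n(t)}_{\op}\le \frac{|t|}{2\sqrt{n\Delta_n}}\Big(\tfrac{2}{\sigma}+O(\log n)+O(\log n)\Big)=O\Big(\frac{\log n}{\sqrt{n\Delta_n}}\Big).
\]
Step 1 gives $\norm{M_n(t)}_{\F}\to\big(t^\top\Iperp t/2\big)^{1/2}>0$. Therefore
\[
\frac{\norm{M_n(t)}_{\op}}{\norm{M_n(t)}_{\F}}\lesssim\frac{\log n}{\sqrt{n\Delta_n}}\to0
\]
under \eqref{eq:mild-growth}. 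Lemma~\ref{lem:qf-clt} then gives $Q_n(M_n(t))/\sqrt{2}\norm{M_n(t)}_{\F}\dto\mathcal N(0,1)$, and Slutsky's lemma with Step 1 yields $t^\top\Xi_n\dto\mathcal N(0,t^\top\Iperp t)$. Since $t$ was arbitrary, Cram\'er--Wold gives the claim.

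\textbf{Main obstacle.} The only delicate point is that the cross terms in Step 1 must vanish \emph{exactly}, not merely asymptotically; otherwise one would need limits of mixed traces at the $n\Delta_n$ scale. This exactness is what the Gram--Schmidt definitions \eqref{eq:an-def} and \eqref{eq:alpha-proj-coeff} guarantee, and it is recorded in Lemmas~\ref{lem:H-orth} and \ref{lem:alpha-orth}. The remaining requirement is that the individual trace limits exist, which Lemma~\ref{lem:alpha-perp} provides through the existence of $I_{\alpha\sigma}$ and $I_{\alpha H}^{\perp}$ in \eqref{eq:Ia-cross}. The dependence of $b_{\sigma,n},b_{H,n}$ on $n$ causes no difficulty, because only their boundedness enters the operator-norm bound.
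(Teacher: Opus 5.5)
Your proposal is correct and follows the paper's own proof. Both arguments apply the Cram\'er--Wold device to the single quadratic form built from $C_{\sigma,n}$, $D_{H,n}^{\perp}$, $A_{\alpha,n}^{\perp}$, use Lemmas~\ref{lem:H-orth} and \ref{lem:alpha-orth} to make the cross traces vanish exactly, and compare an operator norm of order $\log n$ with a Frobenius norm of order $\sqrt{n\Delta_n}$ before invoking Lemma~\ref{lem:qf-clt}. Your explicit normalization, the positivity check on $t^\top\Iperp(\theta)t$, and the Slutsky step only spell out details the paper leaves implicit.
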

	
	\begin{proof}
		For fixed $(u,v,w)\in\R^3$ consider
		\[
		A_n(u,v,w):=uC_{\sigma,n}+vD_{H,n}^{\perp}+wA_{\alpha,n}^{\perp}.
		\]
		By lemmas \ref{lem:H-orth} and \ref{lem:alpha-orth}, all cross traces vanish exactly, so
		\[
		\tr(A_n(u,v,w)^2)=u^2\tr(C_{\sigma,n}^2)+v^2\tr((D_{H,n}^{\perp})^2)+w^2\tr((A_{\alpha,n}^{\perp})^2).
		\]
		Also,
		\[
		\norm{A_n(u,v,w)}_{\op}=O(\log n),
		\qquad
		\norm{A_n(u,v,w)}_{\mathrm F}^2\asymp n\Delta_n.
		\]
		Hence $\norm{A_n(u,v,w)}_{\op}/\norm{A_n(u,v,w)}_{\mathrm F}\to0$, and lemma \ref{lem:qf-clt} plus Cram\'er--Wold finishes the proof.
	\end{proof}
	We now summarize this section. Starting from the raw score vector \((S_{\sigma,n},S_{H,n},S_{\alpha,n})\), we first remove from \(S_{H,n}\) its explicit \(\log \Delta_n\) part and its asymptotically collinear part in the \(\sigma\)-direction, which yields the residual term \(R_{H,n}^{\perp}\). We then project the \(\alpha\)-score onto the orthogonal complement of the first two directions and obtain \(S_{\alpha,n}^{\perp}\). Thus the original score vector is transformed into \((S_{\sigma,n},R_{H,n}^{\perp},S_{\alpha,n}^{\perp})\), whose components correspond to the three asymptotically separated directions in the LAN expansion. The rest of this section establishes their joint central limit theorem and the associated Gaussian central sequence.
	\section{The LAN property in high-frequency observation via matrix expansions}
	The exact score-level triangularization is the central structural identity. First,
	\[
	S_{H,n}=\beta_n(\theta)S_{\sigma,n}+R_{H,n}^{\perp},
	\qquad
	\beta_n(\theta)=\sigma\log\Delta_n+a_n.
	\]
	Second,
	\[
	S_{\alpha,n}=b_{\sigma,n}S_{\sigma,n}+b_{H,n}R_{H,n}^{\perp}+S_{\alpha,n}^{\perp}.
	\]
	Therefore, with the raw score vector
	\[
	\nabla\ell_n(\theta)=\big(S_{\sigma,n},S_{H,n},S_{\alpha,n}\big)^\top,
	\]
	one has the exact lower-triangular score transformation
	\begin{equation}\label{eq:Mn-def}
		M_n\nabla\ell_n(\theta)
		=
		\begin{pmatrix}
			S_{\sigma,n}\\[1mm]
			R_{H,n}^{\perp}\\[1mm]
			S_{\alpha,n}^{\perp}
		\end{pmatrix},
		\qquad
		M_n:=M_n^{(3)}M_n^{(2)}M_n^{(1)},
	\end{equation}
	where
	\[
	M_n^{(1)}=
	\begin{pmatrix}
		1&0&0\\
		-\sigma\log\Delta_n&1&0\\
		0&0&1
	\end{pmatrix},
	\qquad
	M_n^{(2)}=
	\begin{pmatrix}
		1&0&0\\
		-a_n&1&0\\
		0&0&1
	\end{pmatrix},
	\qquad
	M_n^{(3)}=
	\begin{pmatrix}
		1&0&0\\
		0&1&0\\
		-b_{\sigma,n}&-b_{H,n}&1
	\end{pmatrix}.
	\]
	Equivalently,
	\begin{equation}\label{eq:rate-matrix}
		r_n(\theta):=\sqrt{n\Delta_n}\,(M_n^\top)^{-1},
		\qquad
		r_n(\theta)^{-\top}=\frac{1}{\sqrt{n\Delta_n}}M_n.
	\end{equation}
	This is the natural non-diagonal rate matrix in the original coordinates. It is built from the successive removal of the $\log\Delta_n$ term, the orthogonal projection in the $H$-direction, and the final projection in the $\alpha$-direction.
	
	\subsection{Matrix expansion of the likelihood ratio}
	Fix $h\in\R^3$ and set
	\[
	\theta_{n,h}:=\theta+r_n(\theta)^{-1}h.
	\]
	Write $\Sigma_n$ for the covariance matrix of $X^{(n)}$ and define the perturbation matrix
	\[
	S_n(h):=\Sigma_n(\theta)^{-1/2}\big(\Sigma_n(\theta_{n,h})-\Sigma_n(\theta)\big)\Sigma_n(\theta)^{-1/2}.
	\]
	Then, with $Z_n=\Sigma_n(\theta)^{-1/2}X^{(n)}\sim \mathcal{N}(0,I_n)$,
	\begin{equation}\label{eq:llr-exact}
		\ell_n(\theta_{n,h})-\ell_n(\theta)
		=-\frac12\log\det(I_n+S_n(h))-
		\frac12 Z_n^\top\big((I_n+S_n(h))^{-1}-I_n\big)Z_n.
	\end{equation}
	We use two standard matrix Taylor lemmas. They provide the required expansions for the matrix logarithm and matrix inverse.
	\begin{lemma}\label{lem:logdet}
		If  matrix $S$ is symmetric and $\norm{S}_{\op}\le1/2$, then
		\[
		\log\det(I+S)=\tr(S)-\frac12\tr(S^2)+R_{\log}(S),
		\qquad
		|R_{\log}(S)|\le C\norm{S}_{\op}\tr(S^2).
		\]
		where  $R_{\log}(S)$ is the remainder term in the Taylor expansion
		of the matrix logarithm.
	\end{lemma}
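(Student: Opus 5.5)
The plan is to diagonalize $S$ and reduce everything to a scalar Taylor estimate. Since $S$ is real symmetric, we write $S=U\Lambda U^\top$ with $U$ orthogonal and $\Lambda=\diag(\mu_1,\dots,\mu_n)$. The hypothesis $\norm{S}_{\op}\le 1/2$ means $|\mu_j|\le 1/2$ for every $j$, so $I+S$ is positive definite with eigenvalues $1+\mu_j\in[1/2,3/2]$. Hence $\log\det(I+S)=\sum_j\log(1+\mu_j)$, while $\tr(S)=\sum_j\mu_j$ and $\tr(S^2)=\sum_j\mu_j^2$. We then set
\[
R_{\log}(S):=\sum_{j=1}^n\Big(\log(1+\mu_j)-\mu_j+\tfrac12\mu_j^2\Big),
\]
so the claimed expansion holds by definition, and only the bound on $R_{\log}(S)$ remains.

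For the bound we use the scalar remainder estimate. Let $\phi(x):=\log(1+x)-x+\tfrac12x^2$ for $|x|\le 1/2$. By Taylor's theorem with Lagrange remainder, $\phi(x)=\tfrac13(1+\xi)^{-3}x^3$ for some $\xi$ between $0$ and $x$. Since $|\xi|\le1/2$, this gives $|\phi(x)|\le \tfrac13\cdot 8\,|x|^3=\tfrac83|x|^3$. Alternatively, the power series $\sum_{k\ge3}(-1)^{k+1}x^k/k$ gives $|\phi(x)|\le \tfrac13|x|^3\sum_{k\ge0}2^{-k}=\tfrac23|x|^3$. Either bound suffices.

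Summing over the eigenvalues,
\[
|R_{\log}(S)|\le C\sum_j|\mu_j|^3\le C\max_j|\mu_j|\sum_j\mu_j^2=C\norm{S}_{\op}\tr(S^2),
\]
with an absolute constant $C$ (for instance $C=2/3$). This is exactly the claim.

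There is no real obstacle here. The only points needing care are that the spectral calculus is legitimate because $S$ is symmetric, and that $\norm{S}_{\op}\le1/2$ keeps every eigenvalue inside the region where the scalar bound holds uniformly. This uniformity is what makes the constant $C$ independent of $n$, which is how the lemma is used later with $S=S_n(h)$.
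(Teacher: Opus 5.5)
Your proposal is correct and follows essentially the same route as the paper: diagonalize $S$, apply the scalar cubic Taylor bound $|\log(1+x)-x+\tfrac12x^2|\le C|x|^3$ for $|x|\le1/2$ to each eigenvalue, and sum using $\sum_j|\mu_j|^3\le\norm{S}_{\op}\sum_j\mu_j^2$. Your explicit constants (via the Lagrange remainder or the power series) just make concrete the constant $C$ that the paper leaves implicit.
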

	
	\begin{proof}
		Let $\lambda_1,\dots,\lambda_n$ be the eigenvalues of $S$. Since $|\lambda_j|\le1/2$,
		\[
		\log(1+\lambda_j)=\lambda_j-\frac12\lambda_j^2+r(\lambda_j),
		\qquad |r(\lambda_j)|\le C|\lambda_j|^3.
		\]
		Summing over $j$ gives
		\[
		R_{\log}(S)=\sum_{j=1}^n r(\lambda_j),
		\qquad
		|R_{\log}(S)|\le C\sum_{j=1}^n|\lambda_j|^3
		\le C\norm{S}_{\op}\sum_{j=1}^n\lambda_j^2,
		\]
		which is the claimed bound.
	\end{proof}
	
	\begin{lemma}\label{lem:inv}
		If matrix $S$ is symmetric and $\norm{S}_{\op}\le1/2$, then
		\[
		(I+S)^{-1}=I-S+S^2+R_{\mathrm{inv}}(S),
		\qquad
		R_{\mathrm{inv}}(S):=\sum_{k\ge3}(-1)^kS^k.
		\]
		Moreover,
		\[
		\norm{R_{\mathrm{inv}}(S)}_{\op}\le C\norm{S}_{\op}^3,
		\qquad
		\big|\tr(R_{\mathrm{inv}}(S))\big|\le C\norm{S}_{\op}\tr(S^2),
		\]
		\[
		\tr\big(R_{\mathrm{inv}}(S)^2\big)\le C\norm{S}_{\op}^4\tr(S^2).
		\]
		where $R_{inv}(S)$ is the remainder term in the Taylor expansion
		of the matrix inverse.
	\end{lemma}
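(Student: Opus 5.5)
The proof works entirely on the spectrum of $S$. Since $S$ is symmetric, diagonalize it as $S=U\Lambda U^\top$ with $U$ orthogonal and $\Lambda=\diag(\lambda_1,\dots,\lambda_n)$, where $|\lambda_j|\le\norm{S}_{\op}\le1/2$. Every matrix involved ($I+S$, its inverse, the powers $S^k$, and hence $R_{\mathrm{inv}}(S)$) is a function of $S$, so each is diagonal in the same basis. This reduces all three bounds to scalar inequalities for the eigenvalues, in the same way the proof of Lemma~\ref{lem:logdet} was handled.

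\textbf{Step 1 (the expansion).} Since $\norm{S}_{\op}\le 1/2<1$, the Neumann series $(I+S)^{-1}=\sum_{k\ge0}(-1)^kS^k$ converges absolutely in operator norm. Splitting off the terms $k=0,1,2$ gives the stated identity, with $R_{\mathrm{inv}}(S)=\sum_{k\ge3}(-1)^kS^k$. Equivalently,
\[
R_{\mathrm{inv}}(S)=-S^3(I+S)^{-1},
\]
and its eigenvalues are $r(\lambda_j)=-\lambda_j^3/(1+\lambda_j)$.

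\textbf{Step 2 (operator norm).} For $|\lambda|\le1/2$ we have $|1+\lambda|\ge1/2$, so $|r(\lambda)|\le 2|\lambda|^3$. Alternatively, summing the geometric series gives $\norm{R_{\mathrm{inv}}(S)}_{\op}\le\sum_{k\ge3}\norm{S}_{\op}^k\le 2\norm{S}_{\op}^3$. Either way the first bound holds with $C=2$.

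\textbf{Step 3 (trace).}
\[
\big|\tr R_{\mathrm{inv}}(S)\big|\le\sum_j|r(\lambda_j)|\le 2\sum_j|\lambda_j|^3\le 2\norm{S}_{\op}\sum_j\lambda_j^2=2\norm{S}_{\op}\tr(S^2).
\]

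\textbf{Step 4 (Frobenius bound).}
\[
\tr\big(R_{\mathrm{inv}}(S)^2\big)=\sum_j r(\lambda_j)^2\le 4\sum_j\lambda_j^6\le 4\norm{S}_{\op}^4\sum_j\lambda_j^2=4\norm{S}_{\op}^4\tr(S^2).
\]

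There is no real obstacle. The only points needing care are using the symmetry of $S$, which gives simultaneous diagonalization and makes $\tr(S^2)=\sum_j\lambda_j^2$, and the uniform lower bound $|1+\lambda_j|\ge1/2$. That lower bound is exactly why the hypothesis $\norm{S}_{\op}\le1/2$ is imposed. All constants can be taken as $C=4$.
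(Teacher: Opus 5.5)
Your proof is correct and follows essentially the paper's route: the Neumann series, then bounds on the eigenvalues $r(\lambda_j)$ of $R_{\mathrm{inv}}(S)$ via the spectral decomposition of the symmetric matrix $S$. The only difference is cosmetic. You bound the trace through the closed form $r(\lambda)=-\lambda^3/(1+\lambda)$, whereas the paper sums $|\tr(S^k)|\le\norm{S}_{\op}^{k-2}\tr(S^2)$ over $k\ge3$; your version gives the explicit constants $C=2$ and $C=4$.
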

	
	\begin{proof}
		Since $\norm{S}_{\op}<1$, the Neumann series gives
		\[
		(I+S)^{-1}=\sum_{k\ge0}(-1)^kS^k=I-S+S^2+\sum_{k\ge3}(-1)^kS^k.
		\]
		The operator bound follows immediately:
		\[
		\norm{R_{\mathrm{inv}}(S)}_{\op}
		\le \sum_{k\ge3}\norm{S}_{\op}^k
		\le C\norm{S}_{\op}^3.
		\]
		For the trace,
		\[
		|\tr(S^k)|\le \norm{S}_{\op}^{k-2}\tr(S^2),
		\qquad k\ge3,
		\]
		so
		\[
		|\tr(R_{\mathrm{inv}}(S))|
		\le \sum_{k\ge3}|\tr(S^k)|
		\le C\norm{S}_{\op}\tr(S^2).
		\]
		Let $\lambda_1,\dots,\lambda_n$ be the eigenvalues of $S$. Since $R_{\mathrm{inv}}(S)$ is obtained from $S$ by functional calculus with
		\[
		r(z):=\sum_{k\ge3}(-1)^kz^k,
		\]
		the eigenvalues of $R_{\mathrm{inv}}(S)$ are $r(\lambda_1),\dots,r(\lambda_n)$. For $|z|\le 1/2$ one has $|r(z)|\le C|z|^3$, hence
		\[
		\tr(R_{\mathrm{inv}}(S)^2)
		=\sum_{j=1}^n r(\lambda_j)^2
		\le C\sum_{j=1}^n |\lambda_j|^6
		\le C\norm{S}_{\op}^4\sum_{j=1}^n \lambda_j^2
		= C\norm{S}_{\op}^4\tr(S^2).
		\]
		This proves the final estimate.
	\end{proof}
	
	For the final LAN remainder control we now work under the stronger polynomial-mesh condition
	\begin{equation}\label{eq:poly-mesh}
		\Delta_n=n^{-\kappa},\qquad 0<\kappa<1.
	\end{equation}
	Then $\Ln=\kappa\log n$, so for every fixed $m\ge 0$ and $q>0$ we have
	\begin{equation}\label{eq:poly-log-dom}
		\frac{(\log n)^m}{(n\Delta_n)^q}\to0.
	\end{equation}
	The projected score CLTs of the previous sections only needed \eqref{eq:mild-growth}; the stronger assumption \eqref{eq:poly-mesh} is used here solely to make the Hessian and third-order Taylor terms completely explicit.\\
	The first-order part of $S_n(h)$ is
	\[
	S_{1,n}(h)=\sum_{i\in\{\sigma,H,\alpha\}}\delta_{i,n}M_{i,n}(\theta),
	\qquad
	\delta_n(h):=\theta_{n,h}-\theta.
	\]
	Because of the exact triangular identities encoded in $M_n$, these coefficients simplify to
	\begin{equation}\label{eq:clean-first-order}
		S_{1,n}(h)=\frac{1}{\sqrt{n\Delta_n}}
		\big(h_1C_{\sigma,n}+h_2D_{H,n}^{\perp}+h_3A_{\alpha,n}^{\perp}\big).
	\end{equation}
	
	\begin{lemma}\label{lem:delta-size}
		Assume \eqref{eq:poly-mesh}. For fixed $h:=(h_1,h_2,h_3)^\top\in\R^3$,
		\[
		\delta_n(h):=(\delta_{\delta,n},\delta_{H,n},\delta_{\alpha,n})^\top=\frac{1}{\sqrt{n\Delta_n}}M_n^{\top}h.
		\]
		More explicitly, writing
		\[
		M_n=
		\begin{pmatrix}
			1&0&0\\
			-\beta_n(\theta)&1&0\\
			-\widetilde b_{\sigma,n}&-b_{H,n}&1
		\end{pmatrix},
		\qquad
		\widetilde b_{\sigma,n}:=b_{\sigma,n}-\beta_n(\theta)b_{H,n},
		\]
		we have
		\[
		\delta_{\alpha,n}=\frac{h_3}{\sqrt{n\Delta_n}},
		\qquad
		\delta_{H,n}=\frac{h_2-b_{H,n}h_3}{\sqrt{n\Delta_n}},
		\qquad
		\delta_{\sigma,n}=\frac{h_1-\beta_n(\theta)h_2-\widetilde b_{\sigma,n}h_3}{\sqrt{n\Delta_n}}.
		\]
		In particular, since $\beta_n(\theta)=O(1)$ by \eqref{eq:H-clean} and \eqref{eq:an-expansion}, while $b_{\sigma,n}=O(1)$ and $b_{H,n}=O(1)$ by lemma \ref{lem:alpha-perp}, there exists a constant $C_h$ such that
		\begin{equation}\label{eq:delta-bounds}
			|\delta_{\sigma,n}|+|\delta_{H,n}|+|\delta_{\alpha,n}|
			\le C_h\frac{1}{\sqrt{n\Delta_n}},
			\qquad
			|\delta_{H,n}|+|\delta_{\alpha,n}|
			\le C_h\frac{1}{\sqrt{n\Delta_n}}.
		\end{equation}
	\end{lemma}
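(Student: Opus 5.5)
The identity for $\delta_n(h)$ is pure linear algebra: I will invert the rate matrix \eqref{eq:rate-matrix} and multiply out the three elementary factors of $M_n$. The bound \eqref{eq:delta-bounds} then needs only boundedness of the off-diagonal entries $\beta_n(\theta)$, $b_{\sigma,n}$, $b_{H,n}$, which comes from \eqref{eq:an-expansion} and lemma \ref{lem:alpha-perp}.

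\emph{Step 1: invert the rate matrix.} Each of $M_n^{(1)},M_n^{(2)},M_n^{(3)}$ is unit lower-triangular, so $M_n$ is unit lower-triangular and invertible, and hence so is $M_n^\top$. From $r_n(\theta)=\sqrt{n\Delta_n}\,(M_n^\top)^{-1}$ we get $r_n(\theta)^{-1}=(n\Delta_n)^{-1/2}M_n^\top$. Therefore
\[
\delta_n(h)=\theta_{n,h}-\theta=r_n(\theta)^{-1}h=(n\Delta_n)^{-1/2}M_n^\top h .
\]

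\emph{Step 2: compute $M_n$ explicitly.} The factors $M_n^{(1)}$ and $M_n^{(2)}$ only modify the $(2,1)$ entry, and such shears add, so
\[
M_n^{(2)}M_n^{(1)}=\begin{pmatrix}1&0&0\\-(\sigma\log\Delta_n+a_n)&1&0\\0&0&1\end{pmatrix},
\qquad \sigma\log\Delta_n+a_n=\beta_n(\theta).
\]
Left-multiplying by $M_n^{(3)}$ leaves rows one and two unchanged. Row three becomes
\[
(-b_{\sigma,n},-b_{H,n},1)\,M_n^{(2)}M_n^{(1)}=\bigl(-b_{\sigma,n}+\beta_n(\theta)b_{H,n},\,-b_{H,n},\,1\bigr)=\bigl(-\widetilde b_{\sigma,n},\,-b_{H,n},\,1\bigr),
\]
which is the displayed form of $M_n$. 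Transposing and applying the result to $h=(h_1,h_2,h_3)^\top$ gives the three componentwise formulas: $\delta_{\alpha,n}$ involves only $h_3$, $\delta_{H,n}$ involves $h_2-b_{H,n}h_3$, and $\delta_{\sigma,n}$ involves $h_1-\beta_n(\theta)h_2-\widetilde b_{\sigma,n}h_3$.

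\emph{Step 3: the bounds.} This is the only point with analytic content, and the main thing to check is that $\beta_n(\theta)$ stays bounded even though each of its two terms blows up. Since $\sigma\log\Delta_n=-\sigma\Ln$, lemma \ref{lem:an-expansion} gives
\[
\beta_n(\theta)=-\sigma\Ln+\sigma\bigl(\Ln+\tfrac12 b-m\bigr)+o(1)=\sigma\bigl(\tfrac12 b-m\bigr)+o(1)=O(1).
\]
So the logarithmic singularities cancel exactly; this cancellation is why the first two shears were combined into $\beta_n(\theta)$.

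Under \eqref{eq:poly-mesh} we have $\log^2 n/(n\Delta_n)\to0$, so \eqref{eq:mild-growth} holds. Lemma \ref{lem:alpha-perp} then gives $b_{\sigma,n}=O(1)$ and $b_{H,n}=O(1)$, and hence $\widetilde b_{\sigma,n}=b_{\sigma,n}-\beta_n(\theta)b_{H,n}=O(1)$. Put
\[
C_h:=\sup_n\bigl(1+|\beta_n(\theta)|+|\widetilde b_{\sigma,n}|+2|b_{H,n}|\bigr)\,\|h\|_\infty,
\]
which is finite by the above. Summing the three component formulas then yields the first inequality in \eqref{eq:delta-bounds}. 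The second inequality is implied by the first.
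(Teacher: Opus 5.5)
Your proposal is correct and follows the paper's argument: invert $r_n(\theta)$ to get $\delta_n(h)=(n\Delta_n)^{-1/2}M_n^{\top}h$, multiply out the three unit lower-triangular factors, and obtain \eqref{eq:delta-bounds} from $\beta_n(\theta)=O(1)$ (the cancellation of $\sigma\Ln$ via \eqref{eq:an-expansion}) together with $b_{\sigma,n},b_{H,n}=O(1)$ from lemma \ref{lem:alpha-perp}. The only slip is your explicit constant, which is too small when $|b_{H,n}|<2$: with $\|h\|_\infty$ the three components sum to at most $(3+|\beta_n(\theta)|+|\widetilde b_{\sigma,n}|+|b_{H,n}|)\|h\|_\infty/\sqrt{n\Delta_n}$ (e.g.\ $h=(1,1,1)^\top$ with vanishing off-diagonal entries already breaks your bound), but replacing $\|h\|_\infty$ by $\|h\|_1$ in your formula fixes it and nothing else changes.
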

	
	\begin{proof}
		The identity $\delta_n(h)=r_n(\theta)^{-1}h=(n\Delta_n)^{-1/2}M_n^{\top}h$ follows directly from \eqref{eq:rate-matrix}. Multiplying out the three lower-triangular matrices in \eqref{eq:Mn-def} gives the displayed form of $M_n$. Finally, \eqref{eq:delta-bounds} follows from $\beta_n(\theta)=O(1)$ and the boundedness of $b_{\sigma,n},b_{H,n}$.
	\end{proof}
	
	For $i,j,k\in\{\sigma,H,\alpha\}$ and $\vartheta$ in a small fixed neighborhood $\mathcal U$ of $\theta$, define the sandwiched higher-derivative matrices
	\[
	N_{ij,n}(\vartheta):=\Sigma_n(\theta)^{-1/2}\,\partial_{ij}^2\Sigma_n(\vartheta)\,\Sigma_n(\theta)^{-1/2},
	\]
	\[
	P_{ijk,n}(\vartheta):=\Sigma_n(\theta)^{-1/2}\,\partial_{ijk}^3\Sigma_n(\vartheta)\,\Sigma_n(\theta)^{-1/2}.
	\]
	Let $\nu_{ij}$, respectively $\nu_{ijk}$, denote the number of occurrences of the index $H$ in the pair $(i,j)$, respectively the triple $(i,j,k)$.

\begin{lemma}\label{lem:higher-deriv-bounds}
Assume \eqref{eq:poly-mesh}. 
Fix \(h_0>0\), and let \(\delta_n(h)\) be as in Lemma~\ref{lem:delta-size}. Define 
\[
\mathcal U_n(h_0)
:=
\{\theta+t\delta_n(h):\ t\in[0,1],\ \|h\|\le h_0\}.
\]
Then there exists a constant \(C=C(h_0)>0\) such that, for all sufficiently large \(n\),
\begin{equation}\label{eq:N-bounds}
    \sup_{\vartheta\in\mathcal U_n(h_0)}\|N_{ij,n}(\vartheta)\|_{\op}
\le C(\log n)^{\nu_{ij}},
\qquad
\sup_{\vartheta\in\mathcal U_n(h_0)}\|N_{ij,n}(\vartheta)\|_{F}^{2}
\le C(n\Delta_n)(\log n)^{2\nu_{ij}},
\end{equation}
for all \(i,j\in\{\sigma,H,\alpha\}\), and
\begin{equation}\label{eq:P-bounds}
    \sup_{\vartheta\in\mathcal U_n(h_0)}\|P_{ijk,n}(\vartheta)\|_{\op}
\le C(\log n)^{\nu_{ijk}},
\qquad
\sup_{\vartheta\in\mathcal U_n(h_0)}\|P_{ijk,n}(\vartheta)\|_{F}^{2}
\le C(n\Delta_n)(\log n)^{2\nu_{ijk}},
\end{equation}
for all \(i,j,k\in\{\sigma,H,\alpha\}\).
\end{lemma}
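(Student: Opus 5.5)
The plan is to reduce every bound to a pointwise bound on a symbol ratio, as in Lemmas~\ref{lem:C-op}, \ref{lem:D-op} and \ref{lem:alpha-raw}, and then to a low frequency trace computation as in Lemmas~\ref{lem:C-trace} and \ref{lem:D-trace}. Since $\Sigma_n(\vartheta)=T_n(f_{\Delta_n}(\cdot;\vartheta))$ and differentiation commutes with $T_n$, we have $\partial^2_{ij}\Sigma_n(\vartheta)=T_n(\partial^2_{ij}f_{\Delta_n}(\cdot;\vartheta))$, and similarly for third derivatives. Remark~\ref{rem:gen-rayleigh-whitened} then gives
\[
\|N_{ij,n}(\vartheta)\|_{\op}\le \sup_{\lambda}\frac{|\partial^2_{ij}f_{\Delta_n}(\lambda;\vartheta)|}{f_{\Delta_n}(\lambda;\theta)},
\]
and the analogous bound holds for $P_{ijk,n}$. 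So the first step is a uniform symbol estimate. For $\vartheta\in\mathcal U_n(h_0)$ we want
\[
|\partial^2_{ij}f_{\Delta_n}(\lambda;\vartheta)|+|\partial^3_{ijk}f_{\Delta_n}(\lambda;\vartheta)|\lesssim (1+\Ln+|\log|\lambda/\Delta_n||)^{\nu}\,f_{\Delta_n}(\lambda;\vartheta),
\]
with $\nu=\nu_{ij}$ or $\nu_{ijk}$. To prove it, differentiate \eqref{eq:fH} and \eqref{eq:fW} directly. Each $\sigma$-derivative of $f^{(H)}$ costs a factor $2/\sigma$ or removes a factor of $\sigma$. Each $H$-derivative produces a factor $2\log\Delta+\partial_H\log C+\partial_H\log$ of the lattice sum, which is $O(\Ln+|\log|u||)$ with $\lambda=\Delta u$, exactly as for $\psi_\Delta$. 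Each $\alpha$-derivative hits the OU denominator or $f^{(W)}$ and produces a bounded rational factor, as in Lemma~\ref{lem:alpha-raw}. Terms of the form $\partial_H(\partial_H\log S_{H,\Delta})$ contribute further $\log^2|\lambda+2\pi k|$ factors of the same size.

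The second step replaces $f(\cdot;\vartheta)$ by $f(\cdot;\theta)$ in the denominator. By Lemma~\ref{lem:delta-size}, $|\vartheta-\theta|\le C_h(n\Delta_n)^{-1/2}$. Hence $\log f(\lambda;\vartheta)-\log f(\lambda;\theta)$ is bounded by $|\vartheta-\theta|$ times the sup of the first-order ratios, which is $O((n\Delta_n)^{-1/2}\log n)=o(1)$ by \eqref{eq:poly-log-dom}. So $f(\cdot;\vartheta)\asymp f(\cdot;\theta)$ uniformly on $\mathcal U_n(h_0)$. Note that $H$ itself moves by $O((n\Delta_n)^{-1/2})$, while the exponent change gives a factor $\Delta^{2(H'-H)}=\exp(O(\Ln(n\Delta_n)^{-1/2}))$, which is again harmless. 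Finally, $|\log|u||\lesssim \Ln+\log n$ on the effective frequency range $|\lambda|\gtrsim 1/n$, as used in Lemma~\ref{lem:D-op}, and $\Ln=\kappa\log n$. This yields the operator bounds $C(\log n)^{\nu}$.

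For the Frobenius bounds, I would not sandwich to leading order; an upper bound suffices. Using $\|N\|_F^2=\tr(\Sigma_n^{-1}T_n(\partial^2 f)\Sigma_n^{-1}T_n(\partial^2 f))$, I would apply the Toeplitz sandwich approximation from Lemma~\ref{lem:C-trace} to get $\tr(T_n(g)^2)+o(n\Delta_n(\log n)^{2\nu})$, where $g=\partial^2_{ij}f/f$. Then $\tr(T_n(g)^2)\le \frac{n}{2\pi}\int g^2$. The symbol estimate shows $g$ is localized on $|\lambda|\lesssim\Delta_n$, up to a tail. At high frequencies, the weight $w_{\Delta_n}(\lambda)$ decays like $|\lambda/\Delta_n|^{-\rho}$ with $\rho>1/2$, so $w^2$ is integrable. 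The $\alpha$-factors decay like $\alpha^2\Delta^2/\lambda^2$ for the $f^{(H)}$ and $f^{(W)}$ contributions. Consequently $\int g^2\lesssim \Delta_n(\log n)^{2\nu}\int_{\R}(1+|\log|u||)^{2\nu}(w(u)+(1+u^2)^{-1})^2\,\dd u$, which gives $C(n\Delta_n)(\log n)^{2\nu}$. A cruder alternative, if the sandwich is awkward for pure $\alpha\alpha$ terms, is to note that $\partial^2_{\alpha\alpha}f^{(W)}$ has ratio to $f$ that is bounded and localized, and so falls under the same argument.

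The main obstacle is the uniformity of the sandwich step in $\vartheta$, since the sandwich approximations were only asserted at the true $\theta$. It is also delicate for pure $\alpha$-derivatives of $f^{(W)}$ at frequencies $|\lambda|\gg\Delta_n$, where the ratio must be shown to decay rather than merely stay bounded. I would handle this by splitting $[-\pi,\pi]$ into $|\lambda|\le K\Delta_n$ and its complement. On the complement, $\partial_\alpha^k f^{(W)}/f^{(W)}=O(\Delta_n^2/\lambda^2+\Delta_n)$ follows from explicit differentiation of \eqref{eq:fW}. The extra $O(\Delta_n)$ piece contributes $n\Delta_n^2=o(n\Delta_n)$ to the Frobenius norm, which is acceptable.
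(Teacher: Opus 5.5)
Your proposal is correct at the paper's level of rigor and follows the same route. You reduce the operator norms, via the Rayleigh quotient, to pointwise bounds on $\partial^2_{ij}f_{\Delta_n}(\cdot;\vartheta)/f_{\Delta_n}(\cdot;\theta)$, with one logarithm per $H$-index cut off at the unresolved frequencies $|\lambda|\lesssim 1/n$. You then handle the Frobenius bounds with the Toeplitz sandwich reduction and a low/high-frequency splitting of the symbol integral. Your added details are sound: the Parseval bound $\tr(T_n(g)^2)\le \frac{n}{2\pi}\int g^2$, and the $O(\Delta_n)$ high-frequency floor of $\partial_\alpha \log f^{(W)}$, which contributes only $n\Delta_n^2=o(n\Delta_n)$.

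The paper shares the same soft spots. First, the comparability $f(\cdot;\vartheta)\asymp f(\cdot;\theta)$ holds only on the effective range $|\lambda|\gtrsim 1/n$, since near $\lambda=0$ the ratio behaves like $|\lambda|^{-2(H_\vartheta-H)}$. Second, the uniformity in $\vartheta$ of the sandwich step is asserted rather than proved.
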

\begin{proof}
For \(i,j,k\in\{\sigma,H,\alpha\}\), write
\[
q_{ij,n}(\lambda;\vartheta)
:=
\frac{\partial_{ij}^2 f_{\Delta_n}(\lambda;\vartheta)}
     {f_{\Delta_n}(\lambda;\theta)},
\qquad
p_{ijk,n}(\lambda;\vartheta)
:=
\frac{\partial_{ijk}^3 f_{\Delta_n}(\lambda;\vartheta)}
     {f_{\Delta_n}(\lambda;\theta)}.
\]
By Lemma~\ref{lem:delta-size},
\[
\sup_{\vartheta\in\mathcal U_n(h_0)}|\vartheta-\theta|
\le \frac{C(h_0)}{\sqrt{n\Delta_n}}\to0.
\]
Hence, for all sufficiently large \(n\), every
\(\vartheta=(\sigma_\vartheta,H_\vartheta,\alpha_\vartheta)\in\mathcal U_n(h_0)\)
remains in a fixed compact subset of \((0,\infty)\times(3/4,1)\times(0,\infty)\). In particular,
all coefficients in \eqref{eq:fH}--\eqref{eq:fW} depending smoothly on \(\vartheta\) are
uniformly bounded on \(\mathcal U_n(h_0)\).

As in Section~4, differentiation with respect to \(\sigma\) only modifies the fractional amplitude \(\sigma^2C(H)\Delta^{2H}\), whereas differentiation with respect to \(\alpha\) acts either on the OU denominator \((\alpha\Delta)^2+(\lambda+2\pi k)^2\) in \eqref{eq:fH} or on the smooth Brownian component in \eqref{eq:fW}. In contrast, each occurrence of the index \(H\) produces one logarithmic factor. More precisely, if
\[
\psi_{\Delta_n}(\lambda;\vartheta)
:=
\partial_H\log C(H_\vartheta)
+\partial_H\log S_{H_\vartheta,\Delta_n}(\lambda),
\]
then, uniformly over \(\vartheta\in\mathcal U_n(h_0)\),
\[
\psi_{\Delta_n}(\Delta_n u;\vartheta)=2L_n-2\log|u|+O(1)
\]
on bounded \(u\)-sets. Since
\[
\sup_{\vartheta\in\mathcal U_n(h_0)}|H_\vartheta-H|
=O\bigl((n\Delta_n)^{-1/2}\bigr),
\]
the mismatch between the moving numerator parameter \(\vartheta\) and the fixed denominator
parameter \(\theta\) is negligible on the effective range \(|u|\gtrsim (n\Delta_n)^{-1}\).
Therefore,
\[
|q_{ij,n}(\lambda;\vartheta)|
\le C\bigl(1+|\psi_{\Delta_n}(\lambda;\vartheta)|\bigr)^{\nu_{ij}},
\qquad
|p_{ijk,n}(\lambda;\vartheta)|
\le C\bigl(1+|\psi_{\Delta_n}(\lambda;\vartheta)|\bigr)^{\nu_{ijk}},
\]
uniformly in \((\lambda,\vartheta)\in[-\pi,\pi]\times\mathcal U_n(h_0)\).

By Remark~\ref{rem:gen-rayleigh-whitened},
\[
\|N_{ij,n}(\vartheta)\|_{\op}
\le \sup_{\lambda\in[-\pi,\pi]}|q_{ij,n}(\lambda;\vartheta)|,
\qquad
\|P_{ijk,n}(\vartheta)\|_{\op}
\le \sup_{\lambda\in[-\pi,\pi]}|p_{ijk,n}(\lambda;\vartheta)|.
\]
When \(|\lambda|\lesssim\Delta_n\), write \(\lambda=\Delta_n u\). The above bound and the
expansion of \(\psi_{\Delta_n}\) give
\[
|q_{ij,n}(\lambda;\vartheta)|+|p_{ijk,n}(\lambda;\vartheta)|
\le C(1+|\log|u||+L_n)^\nu.
\]
As in Lemma~\ref{lem:Hperp-trace}, frequencies below order \(1/n\) are not resolved by an
\(n\times n\) Toeplitz form, so the logarithm is effectively cut off at size \(O(\log n)\). On the
complementary region \(|\lambda|\gtrsim\Delta_n\), the same off-low-frequency estimates as in
Lemmas~\ref{lem:C-trace}, \ref{lem:D-trace}, \ref{lem:Hperp-trace}, and \ref{lem:alpha-raw}
show that these ratios are uniformly bounded. Hence
\[
\sup_{\vartheta\in\mathcal U_n(h_0)}\|N_{ij,n}(\vartheta)\|_{\op}
\le C L_n^{\nu_{ij}},
\qquad
\sup_{\vartheta\in\mathcal U_n(h_0)}\|P_{ijk,n}(\vartheta)\|_{\op}
\le C L_n^{\nu_{ijk}}.
\]
Since \(L_n\asymp\log n\) under \eqref{eq:poly-mesh}, this proves the operator bounds.

For the Frobenius bounds, let
\[
B_{ij,n}(\vartheta)
:=
\Sigma_n(\theta)^{-1}T_n(\partial_{ij}^2f_{\Delta_n}(\cdot;\vartheta)),
\qquad
C_{ijk,n}(\vartheta)
:=
\Sigma_n(\theta)^{-1}T_n(\partial_{ijk}^3f_{\Delta_n}(\cdot;\vartheta)).
\]
These are similar to \(N_{ij,n}(\vartheta)\) and \(P_{ijk,n}(\vartheta)\), respectively. Since
\(\mathcal U_n(h_0)\) is a shrinking tube around \(\theta\), the same low-frequency truncation
and Toeplitz-product approximation as in Section~4 yields, uniformly over
\(\vartheta\in\mathcal U_n(h_0)\),
\[
\tr\bigl(N_{ij,n}(\vartheta)^2\bigr)
=
\frac{n}{2\pi}\int_{-\pi}^{\pi}q_{ij,n}(\lambda;\vartheta)^2\,d\lambda
+o\bigl((n\Delta_n)L_n^{2\nu_{ij}}\bigr),
\]
\[
\tr\bigl(P_{ijk,n}(\vartheta)^2\bigr)
=
\frac{n}{2\pi}\int_{-\pi}^{\pi}p_{ijk,n}(\lambda;\vartheta)^2\,d\lambda
+o\bigl((n\Delta_n)L_n^{2\nu_{ijk}}\bigr).
\]

It remains to bound the integrals. On \(|\lambda|\le M\Delta_n\) with $M$ fixed, writing \(\lambda=\Delta_n u\),
the preceding estimates imply
\[
|q_{ij,n}(\Delta_n u;\vartheta)|
\le C\,w(u)\bigl(1+|\log|u||\bigr)^{\nu_{ij}},
\qquad
|p_{ijk,n}(\Delta_n u;\vartheta)|
\le C\,w(u)\bigl(1+|\log|u||\bigr)^{\nu_{ijk}},
\]
uniformly over \(\vartheta\in\mathcal U_n(h_0)\). Since \(\rho>1/2\), the functions
\(w(u)^2(1+|\log|u||)^{2m}\) are integrable on \(\mathbb R\), and therefore
\[
\int_{|\lambda|\le M\Delta_n}q_{ij,n}(\lambda;\vartheta)^2\,d\lambda
\le C\Delta_nL_n^{2\nu_{ij}},
\qquad
\int_{|\lambda|\le M\Delta_n}p_{ijk,n}(\lambda;\vartheta)^2\,d\lambda
\le C\Delta_nL_n^{2\nu_{ijk}}.
\]
On the complementary region \(|\lambda|>M\Delta_n\), the same off-low-frequency truncation
argument as in Section~4 shows that the contribution is negligible compared with
\(\Delta_nL_n^{2\nu}\), uniformly over \(\vartheta\in\mathcal U_n(h_0)\). Hence
\[
\int_{-\pi}^{\pi}q_{ij,n}(\lambda;\vartheta)^2\,d\lambda
\le C\Delta_nL_n^{2\nu_{ij}},
\qquad
\int_{-\pi}^{\pi}p_{ijk,n}(\lambda;\vartheta)^2\,d\lambda
\le C\Delta_nL_n^{2\nu_{ijk}},
\]
uniformly over \(\vartheta\in\mathcal U_n(h_0)\). Substituting these bounds into the trace
representations proves \eqref{eq:N-bounds} and \eqref{eq:P-bounds}.
\end{proof}
	\begin{proposition} \label{prop:Sn-split}
		Assume \eqref{eq:poly-mesh}. For fixed $h\in\R^3$, the perturbation matrix admits the decomposition
		\begin{equation}\label{eq:Sn-split}
			S_n(h)=S_{1,n}(h)+S_{2,n}(h)+S_{3,n}(h),
		\end{equation}
		where
		\[
		S_{1,n}(h)=\sum_{i\in\{\sigma,H,\alpha\}}\delta_{i,n}M_{i,n}(\theta)
		=\frac{1}{\sqrt{n\Delta_n}}\bigl(h_1C_{\sigma,n}+h_2D_{H,n}^{\perp}+h_3A_{\alpha,n}^{\perp}\bigr),
		\]
		\[
		S_{2,n}(h)=\frac12\sum_{i,j}\delta_{i,n}\delta_{j,n}N_{ij,n}(\theta),
		\]
		and
		\[
		S_{3,n}(h)=\sum_{i,j,k}\delta_{i,n}\delta_{j,n}\delta_{k,n}
		\int_0^1\frac{(1-t)^2}{2}\,P_{ijk,n}(\theta+t\delta_n(h))\,\dd t.
		\]
		Moreover,
		\begin{equation}\label{eq:S1-bounds}
			\norm{S_{1,n}(h)}_{\op}\le C_h\frac{\log n}{\sqrt{n\Delta_n}},
			\qquad
			\norm{S_{1,n}(h)}_{\F}=O(1),
			\qquad
			\frac12\tr\bigl(S_{1,n}(h)^2\bigr)\to h^{\top}\Iperp(\theta)h,
		\end{equation}
		and
		\begin{equation}\label{eq:S2-bounds}
			\norm{S_{2,n}(h)}_{\op}\le C_h\frac{\log^2 n}{n\Delta_n},
			\qquad
			\norm{S_{2,n}(h)}_{\F}\le C_h\frac{\log^2 n}{\sqrt{n\Delta_n}},
		\end{equation}
		\begin{equation}\label{eq:S3-bounds}
			\norm{S_{3,n}(h)}_{\op}\le C_h\frac{\log^3 n}{(n\Delta_n)^{3/2}},
			\qquad
			\norm{S_{3,n}(h)}_{\F}\le C_h\frac{\log^3 n}{n\Delta_n}.
		\end{equation}
		In particular,
		\begin{equation}\label{eq:SminusS1}
			\norm{S_n(h)-S_{1,n}(h)}_{\op}+\norm{S_n(h)-S_{1,n}(h)}_{\F}\to0.
		\end{equation}
	\end{proposition}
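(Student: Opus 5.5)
The plan is to start from a third-order Taylor expansion of the map $\vartheta\mapsto\Sigma_n(\vartheta)$ along the segment $\theta+t\delta_n(h)$, $t\in[0,1]$. By Lemma~\ref{lem:higher-deriv-bounds} this segment lies in $\mathcal U_n(h_0)$ for $h_0\ge\|h\|$. Taylor's formula with integral remainder, applied entrywise, gives
\[
\Sigma_n(\theta_{n,h})-\Sigma_n(\theta)=\sum_i\delta_{i,n}\partial_i\Sigma_n(\theta)+\tfrac12\sum_{i,j}\delta_{i,n}\delta_{j,n}\partial^2_{ij}\Sigma_n(\theta)+\sum_{i,j,k}\delta_{i,n}\delta_{j,n}\delta_{k,n}\int_0^1\tfrac{(1-t)^2}{2}\partial^3_{ijk}\Sigma_n(\theta+t\delta_n(h))\,\dd t.
\]
Sandwiching by $\Sigma_n(\theta)^{-1/2}$ yields \eqref{eq:Sn-split} with the stated $S_{1,n},S_{2,n},S_{3,n}$.

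For the second expression of $S_{1,n}$, I would use $\delta_n(h)=(n\Delta_n)^{-1/2}M_n^\top h$ from Lemma~\ref{lem:delta-size}. This gives
\[
\sum_i\delta_{i,n}M_{i,n}=(n\Delta_n)^{-1/2}\,h^\top M_n\,(M_{\sigma,n},M_{H,n},M_{\alpha,n})^\top.
\]
Applying the matrix identities underlying \eqref{eq:Mn-def} at the level of matrices, namely $M_{\sigma,n}=C_{\sigma,n}$, $M_{H,n}=\sigma\log\Delta_n\,C_{\sigma,n}+a_nC_{\sigma,n}+D_{H,n}^\perp$ (from \eqref{eq:H-symbol-split} and \eqref{eq:an-def}), and $M_{\alpha,n}=A_{\alpha,n}=b_{\sigma,n}C_{\sigma,n}+b_{H,n}D^\perp_{H,n}+A^\perp_{\alpha,n}$, one checks that $M_n(M_{\sigma,n},M_{H,n},M_{\alpha,n})^\top=(C_{\sigma,n},D^\perp_{H,n},A^\perp_{\alpha,n})^\top$.

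The bounds \eqref{eq:S1-bounds} then follow from existing lemmas. The operator bound uses Lemmas~\ref{lem:C-op}, \ref{lem:Hperp-trace} and \ref{lem:alpha-perp}: each piece is $O(\log n)$, so the prefactor $(n\Delta_n)^{-1/2}$ gives the claim. For the trace, I would expand $\tr(S_{1,n}^2)$. The cross traces vanish exactly by Lemmas~\ref{lem:H-orth} and \ref{lem:alpha-orth}. The diagonal traces divided by $n\Delta_n$ converge to $2I_{\sigma\sigma}$, $2I^\perp_{HH}$ and $2I^\perp_{\alpha\alpha}$ by Lemmas~\ref{lem:C-asym}, \ref{lem:Hperp-trace} and \ref{lem:alpha-perp}. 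Hence $\tfrac12\tr(S_{1,n}^2)\to h^\top\Iperp h$, and the Frobenius norm is $O(1)$.

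For $S_{2,n}$ and $S_{3,n}$ I would use the triangle inequality together with \eqref{eq:delta-bounds}, which gives $|\delta_{i,n}|\le C_h(n\Delta_n)^{-1/2}$ for every coordinate. Combined with \eqref{eq:N-bounds} and $\nu_{ij}\le2$:
\[
\|S_{2,n}\|_{\op}\le C_h(n\Delta_n)^{-1}\log^2 n,\qquad \|S_{2,n}\|_{\F}\le C_h(n\Delta_n)^{-1}(n\Delta_n)^{1/2}\log^2 n.
\]
For $S_{3,n}$, I would move the norm inside the $t$-integral (Minkowski) and take the supremum over $\mathcal U_n(h_0)$ in \eqref{eq:P-bounds}. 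Since $\nu_{ijk}\le3$, this gives the bounds \eqref{eq:S3-bounds}.

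Finally, \eqref{eq:SminusS1} follows because all four bounds tend to zero by \eqref{eq:poly-log-dom}.

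The main obstacle is the uniform bound $\beta_n(\theta)=O(1)$ invoked in Lemma~\ref{lem:delta-size}. By \eqref{eq:an-expansion}, $\beta_n=\sigma\log\Delta_n+a_n=\sigma(\tfrac12 b-m)+o(1)$; this holds because the two $L_n$ terms cancel exactly. The cancellation is precisely what prevents $\delta_{\sigma,n}$ from carrying an extra $\log n$ factor. Were it to fail, the $\sigma\sigma$ terms of $S_{2,n}$ would gain $\log^2 n$ factors, though these would still vanish under the polynomial mesh. I would therefore double-check the sign conventions in the matrix identity for $M_{H,n}$ first.
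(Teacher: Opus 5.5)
Your proposal is correct and follows the paper's proof essentially step for step. Both use Taylor's formula with integral remainder sandwiched by $\Sigma_n(\theta)^{-1/2}$, then exact orthogonality plus the trace asymptotics for $S_{1,n}(h)$, and finally the triangle inequality with Lemmas~\ref{lem:delta-size} and \ref{lem:higher-deriv-bounds} for $S_{2,n}(h)$ and $S_{3,n}(h)$. The one addition is that you explicitly check the matrix identity $M_n(M_{\sigma,n},M_{H,n},M_{\alpha,n})^\top=(C_{\sigma,n},D_{H,n}^{\perp},A_{\alpha,n}^{\perp})^\top$ and the $L_n$-cancellation behind $\beta_n(\theta)=O(1)$, both of which the paper simply asserts.
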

	\begin{proof}
		Since \(\Sigma_n(\cdot)\) is \(C^3\) in a neighborhood of \(\theta\), Taylor's formula with integral remainder $R_{ijk,n}(\theta,\delta_n)$ gives
		\begin{align*}
			\Sigma_n(\theta+\delta_n)-\Sigma_n(\theta)
			& =
			\sum_i \delta_{i,n}\,\partial_i\Sigma_n(\theta)
			+\frac12\sum_{i,j}\delta_{i,n}\delta_{j,n}\,\partial_{ij}^2\Sigma_n(\theta)\\
			& \quad +\sum_{i,j,k}\delta_{i,n}\delta_{j,n}\delta_{k,n}R_{ijk,n}(\theta,\delta_n)
			,
		\end{align*}
		where $R_{ijk,n}(\theta,\delta_n) =\int_0^1 \frac{(1-t)^2}{2}\,\partial_{ijk}^3\Sigma_n(\theta+t\delta_n)\,\dd t$. After sandwiching by \(\Sigma_n(\theta)^{-1/2}\), this yields the decomposition \eqref{eq:Sn-split}.  
		
		First, we consider $S_{1,n}(h)$ term. The expression for \(S_{1,n}(h)\) is exactly \eqref{eq:clean-first-order}. Using
		\[
		\|C_{\sigma,n}\|_{\op}=O(1),\qquad
		\|D_{H,n}^{\perp}\|_{\op}=O(\log n),\qquad
		\|A_{\alpha,n}^{\perp}\|_{\op}=O(\log n),
		\]
		we obtain the operator bound in \eqref{eq:S1-bounds}. Moreover, by the exact orthogonality
		\[
		\tr(C_{\sigma,n}D_{H,n}^{\perp})
		=
		\tr(C_{\sigma,n}A_{\alpha,n}^{\perp})
		=
		\tr(D_{H,n}^{\perp}A_{\alpha,n}^{\perp})=0,
		\]
		together with the trace limits from Sections~3--4, we obtain
		\[
		\frac12\tr\bigl(S_{1,n}(h)^2\bigr)
		=
		\frac{1}{2n\Delta_n}
		\Bigl(
		h_1^2\tr(C_{\sigma,n}^2)
		+h_2^2\tr((D_{H,n}^{\perp})^2)
		+h_3^2\tr((A_{\alpha,n}^{\perp})^2)
		\Bigr)
		\to h^\top \mathcal{I}^\perp(\theta)h.
		\]
		In particular, \(\|S_{1,n}(h)\|_F=O(1)\).
		
		Next, by lemmas \ref{lem:delta-size} and \ref{lem:higher-deriv-bounds},
		\[
		\|S_{2,n}(h)\|_{\op}
		\le
		\frac12\sum_{i,j}|\delta_{i,n}\delta_{j,n}|\,\|N_{ij,n}(\theta)\|_{\op}
		\le
		C_h\frac{\log^2 n}{n\Delta_n},
		\]
		because \(|\delta_{i,n}|\le C_h/\sqrt{n\Delta_n}\) and the worst case \(\nu_{ij}=2\) contributes an additional factor \((\log n)^2\). Similarly,
		\[
		\|S_{2,n}(h)\|_{F}
		\le
		\frac12\sum_{i,j}|\delta_{i,n}\delta_{j,n}|\,\|N_{ij,n}(\theta)\|_{F}
		\le
		C_h\frac{\log^2 n}{\sqrt{n\Delta_n}}.
		\]
		Both bounds vanish by \eqref{eq:poly-log-dom}, proving \eqref{eq:S2-bounds}.

        Finally, the third-order remainder has Taylor weight \(\int_0^1 (1-t)^2/2\,dt=1/6\).
Since \(S_{3,n}(h)\) is integrated along the segment
\[
\{\theta+t\delta_n(h):\ t\in[0,1]\},
\]
this segment is contained in \(\mathcal U_n(h_0)\) for any fixed \(h_0>\|h\|\). Hence,
using again Lemmas~\ref{lem:delta-size} and \ref{lem:higher-deriv-bounds}, together with \(\nu_{ijk}\le 3\),
\[
\|S_{3,n}(h)\|_{\op}\le C_h\frac{\log^3 n}{(n\Delta_n)^{3/2}},
\qquad
\|S_{3,n}(h)\|_{F}\le C_h\frac{\log^3 n}{n\Delta_n}.
\]	These also vanish by \eqref{eq:poly-log-dom}, so \eqref{eq:S3-bounds} follows. Therefore
		\[
		\|S_n(h)-S_{1,n}(h)\|_{\op}+\|S_n(h)-S_{1,n}(h)\|_{F}\to 0,
		\]
		which is exactly \eqref{eq:SminusS1}. This completes the proof.
	\end{proof} 
	
	\begin{theorem}\label{thm:LAN}
		Assume $H>3/4$, $\Delta_n\to0$, $T_n=n\Delta_n\to\infty$, \eqref{eq:mild-growth}, and the polynomial mesh condition \eqref{eq:poly-mesh}. Let
		\[
		\theta=(\sigma,H,\alpha)
		\qquad\text{with}\qquad
		\sigma>0,\ \alpha>0,\ H\in(3/4,1).
		\]
		For every fixed $h\in\R^3$, define the local alternative
		\[
		\theta_{n,h}=\theta+r_n(\theta)^{-1}h,
		\qquad
		r_n(\theta)=\sqrt{n\Delta_n}\,(M_n^\top)^{-1}.
		\]
		Then
		\begin{equation}\label{eq:LAN-main}
			\ell_n(\theta_{n,h})-\ell_n(\theta)
			=h^\top\Xi_n-\frac12 h^\top\Iperp(\theta)h+o_{P_\theta}(1),
		\end{equation}
		where
		\[
		\Xi_n=\frac1{\sqrt{n\Delta_n}}
		\begin{pmatrix}
			S_{\sigma,n}\\ R_{H,n}^{\perp}\\ S_{\alpha,n}^{\perp}
		\end{pmatrix}
		\dto \mathcal{N}\big(0,\Iperp(\theta)\big),
		\qquad
		\Iperp(\theta)=\diag\Big(I_{\sigma\sigma},I_{HH}^{\perp},I_{\alpha\alpha}^{\perp}\Big)
		\]
		and  $I_{\sigma\sigma}$ in \eqref{eq:Iss}, $I_{HH}^{\perp}$ in \eqref{eq:IHHperp}, $I_{\alpha\alpha}^{\perp}$ in \eqref{eq:Iaa-perp}, $M_n$ in \eqref{eq:Mn-def}.
		In particular, the statistical experiment generated by $X^{(n)}$ is locally asymptotically normal at $\theta$ with non-diagonal rate matrix $r_n(\theta)$.
	\end{theorem}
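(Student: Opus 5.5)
The plan starts from the exact identity \eqref{eq:llr-exact} and expands both terms with Lemmas~\ref{lem:logdet} and \ref{lem:inv}. By \eqref{eq:SminusS1} and \eqref{eq:S1-bounds}, $\norm{S_n(h)}_{\op}\to0$, so for large $n$ the condition $\norm{S_n(h)}_{\op}\le1/2$ holds and both lemmas apply with $S=S_n:=S_n(h)$. Substituting gives
\[
\ell_n(\theta_{n,h})-\ell_n(\theta)=\tfrac12 Q_n(S_n)-\tfrac12 Z_n^\top S_n^2Z_n+\tfrac14\tr(S_n^2)-\tfrac12 R_{\log}(S_n)-\tfrac12 Z_n^\top R_{\mathrm{inv}}(S_n)Z_n .
\]
We then write $Z_n^\top S_n^2Z_n=\tr(S_n^2)+Q_n(S_n^2)$, so the deterministic part becomes $-\tfrac14\tr(S_n^2)$. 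The proof then reduces to four claims.

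The first claim is that $\tfrac12 Q_n(S_{1,n}(h))=h^\top\Xi_n$ exactly. This follows from \eqref{eq:clean-first-order} and the definitions \eqref{eq:sigma-score}, \eqref{eq:an-def} and \eqref{eq:alpha-proj}, since $Q_n$ is linear.

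The second claim is that $\tfrac12 Q_n(S_n-S_{1,n})=o_P(1)$. Its variance is $\tfrac12\norm{S_n-S_{1,n}}_{\F}^2$ by \eqref{eq:varofGQF}, and this tends to zero by \eqref{eq:SminusS1}.

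The third claim is that $\tfrac14\tr(S_n^2)\to\tfrac12 h^\top\Iperp h$. We write $\tr(S_n^2)-\tr(S_{1,n}^2)=\tr((S_n-S_{1,n})(S_n+S_{1,n}))$. By Cauchy--Schwarz in Frobenius norm this is bounded by $\norm{S_n-S_{1,n}}_{\F}(2\norm{S_{1,n}}_{\F}+\norm{S_n-S_{1,n}}_{\F})\to0$, using \eqref{eq:S1-bounds}. Combined with $\tfrac12\tr(S_{1,n}^2)\to h^\top\Iperp h$, this gives the claim. Together with the first claim, this produces exactly the right-hand side of \eqref{eq:LAN-main}.

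The fourth claim is that the remaining random and remainder terms vanish, and this is the step that needs the most care. For $Q_n(S_n^2)$ the variance is $2\tr(S_n^4)\le2\norm{S_n}_{\op}^2\tr(S_n^2)$. Here $\tr(S_n^2)=O(1)$, and $\norm{S_n}_{\op}\lesssim \log n/\sqrt{n\Delta_n}+o(1)\to0$ by \eqref{eq:S1-bounds}--\eqref{eq:S3-bounds}, so $Q_n(S_n^2)=o_P(1)$. For the logdet remainder, Lemma~\ref{lem:logdet} gives $|R_{\log}(S_n)|\le C\norm{S_n}_{\op}\tr(S_n^2)\to0$. For the inverse remainder we split $Z_n^\top R_{\mathrm{inv}}Z_n=\tr(R_{\mathrm{inv}})+Q_n(R_{\mathrm{inv}})$. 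Lemma~\ref{lem:inv} controls the mean by $C\norm{S_n}_{\op}\tr(S_n^2)\to0$ and the variance by $2\tr(R_{\mathrm{inv}}^2)\le C\norm{S_n}_{\op}^4\tr(S_n^2)\to0$.

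The only delicate ingredient in this last step is the uniform operator-norm smallness of $S_n$. The $\log n$ factors coming from $D_{H,n}^\perp$ and $A_{\alpha,n}^\perp$, and the $\log^2 n$, $\log^3 n$ factors in the higher-order terms, must be beaten by powers of $n\Delta_n$. Under \eqref{eq:poly-mesh} this is guaranteed by \eqref{eq:poly-log-dom}, and Proposition~\ref{prop:Sn-split} supplies exactly these bounds.

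Finally, we combine: all error terms are $o_{P_\theta}(1)$, which yields \eqref{eq:LAN-main}. The convergence $\Xi_n\dto\mathcal N(0,\Iperp(\theta))$ is Proposition~\ref{prop:joint-clt}, applicable because \eqref{eq:mild-growth} is assumed. Since $\Iperp(\theta)$ is nonsingular, with $I_{\alpha\alpha}^\perp>0$ by Lemma~\ref{lem:alpha-perp} and the other two entries positive by their explicit formulas, this is the LAN property with rate $r_n(\theta)$.
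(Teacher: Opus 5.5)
Your proposal is correct and follows essentially the same route as the paper's proof. Both arguments expand the exact identity \eqref{eq:llr-exact} with Lemmas~\ref{lem:logdet} and \ref{lem:inv}, identify $\tfrac12 Q_n(S_{1,n}(h))$ exactly with $h^\top\Xi_n$, discard $S_n(h)-S_{1,n}(h)$ through its Frobenius norm (Proposition~\ref{prop:Sn-split}), and kill $Q_n(S_n^2)$ and the remainders using $\|S_n(h)\|_{\op}\to0$ and $\tr(S_n(h)^2)=O(1)$, before closing with Proposition~\ref{prop:joint-clt}; your single Cauchy--Schwarz bound on $\tr(S_n^2)-\tr(S_{1,n}^2)$ is just a compact form of the paper's expansion of the square.
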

	\begin{proof}
		We decompose \(S_n(h)\) into its linear part and a negligible remainder, and then expand the Gaussian log-likelihood ratio to second order. Fix \(h\in\R^3\), and write
		\[
		\theta_h:=\theta_{n,h},\qquad
		\Sigma:=\Sigma_n(\theta),\qquad
		\Sigma_h:=\Sigma_n(\theta_h),\qquad
		S:=S_n(h),
		\]
		and
		\[
		Z_n:=\Sigma^{-1/2}X^{(n)}\sim \mathcal{N}(0,I_n)
		\quad\text{under }P_\theta.
		\]
		By construction,
		\[
		\Sigma_h=\Sigma^{1/2}(I_n+S)\Sigma^{1/2},
		\]
		and hence the Gaussian likelihood identity yields
		\begin{equation}\label{eq:llr-exact-proof}
			\ell_n(\theta_h)-\ell_n(\theta)
			=
			-\frac12\log\det(I_n+S)
			-\frac12 Z_n^\top\bigl((I_n+S)^{-1}-I_n\bigr)Z_n.
		\end{equation}
		By proposition~\ref{prop:Sn-split}, 
		\[
		S=S_{1,n}(h)+R_n(h),
		\qquad
		R_n(h):=S_{2,n}(h)+S_{3,n}(h),
		\]
		with
		\[
		\|R_n(h)\|_{\op}+\|R_n(h)\|_{F}\to0.
		\]
		Moreover, by \eqref{eq:S1-bounds},
		\[
		\|S_{1,n}(h)\|_F=O(1),
		\qquad
		\|S_{1,n}(h)\|_{\op}\to0,
		\qquad
		\frac12\tr\bigl(S_{1,n}(h)^2\bigr)\to h^\top \mathcal{I}^\perp(\theta)h.
		\]
		Therefore,
		\[
		\|S\|_{\op}\le \|S_{1,n}(h)\|_{\op}+\|R_n(h)\|_{\op}\to0.
		\]
		We first consider the linear term. Since
		\[
		\frac12\bigl(Z_n^\top SZ_n-\tr(S)\bigr)
		=
		\frac12\bigl(Z_n^\top S_{1,n}(h)Z_n-\tr(S_{1,n}(h))\bigr)
		+\frac12\bigl(Z_n^\top R_n(h)Z_n-\tr(R_n(h))\bigr),
		\]
		the first term is exactly \(h^\top\Xi_n\) by \eqref{eq:clean-first-order}. For the second term,
		\[
		\Var \!\Bigl(\frac12\bigl(Z_n^\top R_n(h)Z_n-\tr(R_n(h))\bigr)\Bigr)
		=
		\frac12\tr\bigl(R_n(h)^2\bigr)
		=
		\frac12\|R_n(h)\|_F^2\to0.
		\]
		Hence
		\begin{equation}\label{eq:first-order-final}
			\frac12\bigl(Z_n^\top SZ_n-\tr(S)\bigr)
			=
			h^\top\Xi_n+o_{P_\theta}(1).
		\end{equation}
		
		Next, expanding \(\tr(S^2)\) gives
		\[
		\tr(S^2)
		=
		\tr\bigl(S_{1,n}(h)^2\bigr)
		+2\tr\bigl(S_{1,n}(h)R_n(h)\bigr)
		+\tr\bigl(R_n(h)^2\bigr).
		\]
		Since \(\|S_{1,n}(h)\|_F=O(1)\) and \(\|R_n(h)\|_F\to0\), the last two terms are \(o(1)\), and therefore
		\[
		\tr(S^2)=\tr\bigl(S_{1,n}(h)^2\bigr)+o(1).
		\]
		Using \eqref{eq:S1-bounds}, we obtain
		\begin{equation}\label{eq:trace-S2-limit}
			\frac14\tr(S^2)\to \frac12\,h^\top \mathcal{I}^\perp(\theta)h.
		\end{equation}
		In particular, \eqref{eq:trace-S2-limit} implies that \(\tr(S^2)=O(1)\).
		
		We now control the random quadratic correction. By Wick's formula,
		\[
		\Var \bigl(Z_n^\top S^2Z_n-\tr(S^2)\bigr)
		=
		2\tr(S^4)
		\le
		2\|S\|_{\op}^2\tr(S^2).
		\]
		Since \(\|S\|_{\op}\to0\) and \(\tr(S^2)=O(1)\), it follows that
		\begin{equation}\label{eq:random-quad-small}
			Z_n^\top S^2Z_n=\tr(S^2)+o_{P_\theta}(1).
		\end{equation}
		Because \(\|S\|_{\op}\to0\), we have \(\|S\|_{\op}\le 1/2\) for all sufficiently large \(n\), so lemmas~\ref{lem:logdet} and \ref{lem:inv} apply. Substituting their second order expansions into \eqref{eq:llr-exact}, we obtain
		\[
		\ell_n(\theta_h)-\ell_n(\theta)
		=
		\frac12\bigl(Z_n^\top SZ_n-\tr(S)\bigr)
		-\frac12 Z_n^\top S^2Z_n
		+\frac14\tr(S^2)
		-\frac12R_{\log}(S)
		-\frac12 Z_n^\top R_{\mathrm{inv}}(S)Z_n.
		\]
		The log-determinant remainder satisfies
		\[
		|R_{\log}(S)|\le C\|S\|_{\op}\tr(S^2)=o(1),
		\]
		and the inverse remainder satisfies
		\[
		|\tr(R_{\mathrm{inv}}(S))|
		\le C\|S\|_{\op}\tr(S^2)=o(1).
		\]
		Moreover,
		\[
		\Var \!\Bigl(Z_n^\top R_{\mathrm{inv}}(S)Z_n-\tr(R_{\mathrm{inv}}(S))\Bigr)
		=
		2\tr\bigl(R_{\mathrm{inv}}(S)^2\bigr)
		\le
		C\|S\|_{\op}^4\tr(S^2)\to0.
		\]
		Hence
		\[
		Z_n^\top R_{\mathrm{inv}}(S)Z_n
		=
		\tr(R_{\mathrm{inv}}(S))+o_{P_\theta}(1).
		\]
		Since \(\tr(R_{\mathrm{inv}}(S))=o(1)\), we conclude that
		\begin{equation}\label{eq:Rinv-small}
			Z_n^\top R_{\mathrm{inv}}(S)Z_n=o_{P_\theta}(1).
		\end{equation}
		
		Combining \eqref{eq:first-order-final}, \eqref{eq:trace-S2-limit},
		\eqref{eq:random-quad-small}, \eqref{eq:Rinv-small}, and the bound
		\(R_{\log}(S)=o(1)\), we conclude that
		\[
		\ell_n(\theta_h)-\ell_n(\theta)
		=
		h^\top\Xi_n-\frac14\tr(S^2)+o_{P_\theta}(1)
		=
		h^\top\Xi_n-\frac12 h^\top \mathcal{I}^\perp(\theta)h+o_{P_\theta}(1).
		\]
		Finally, proposition~\ref{prop:joint-clt} yields
		\[
		\Xi_n \dto \mathcal{N}\bigl(0,\mathcal{I}^\perp(\theta)\bigr),
		\]
		which proves the LAN property.
	\end{proof}
	\begin{remark}
		As always, LAN is invariant under deterministic invertible linear reparametrization of the local parameter. Thus one may map the diagonalized $h$-coordinates back to the original $(\sigma,H,\alpha)$ basis. The price is that the limiting information matrix becomes symmetric but non-diagonal. The lower-triangular form used here is simply the cleanest way to expose the three independent asymptotic directions.
	\end{remark}
	
	\section{Figure design for the three dimensional case}
	In the two-dimensional mfBm paper \cite{Cai2026}, the simulation section can display the full projected Gaussian by a single contour plot and a single surface plot. In the present three dimensional setting, a single picture cannot faithfully display the full limit. A practical and mathematically natural design is therefore:
	\begin{enumerate}[label=(\roman*),itemsep=2pt]
		\item a low frequency profile plot of the weight $w(u)=1/(1+A^{-1}|u|^\rho)$, which explains where the Fisher information comes from;
		\item three pairwise contour plots for the bivariate projected limits
		$(\Xi_{\sigma},\Xi_H^{\perp})$, $(\Xi_{\sigma},\Xi_{\alpha}^{\perp})$, and $(\Xi_H^{\perp},\Xi_{\alpha}^{\perp})$;
		\item optionally, a three dimensional Gaussian ellipsoid or density surface for the diagonal limit in $(\Xi_{\sigma},\Xi_H^{\perp},\Xi_{\alpha}^{\perp})$ coordinates.
	\end{enumerate}
	Since the projected covariance matrix is diagonal, the pairwise contour plots are especially informative: every ellipse is axis-aligned, visually emphasizing that the three step projection has removed the asymptotic correlations.
	
	Below we include two schematic figures that can be compiled directly in \LaTeX{}.
	
	\begin{figure}[htbp]
		\centering
		\begin{tikzpicture}
			\begin{axis}[
				width=0.72\textwidth,
				height=0.36\textwidth,
				xlabel={$u$}, ylabel={$w(u)$},
				legend style={draw=none,fill=none,at={(0.97,0.97)},anchor=north east},
				axis lines=left, domain=-8:8, samples=300, ymin=0, ymax=1.05]
				\addplot[very thick,blue!70!black] {1/(1 + (abs(x)^0.6)/2.5)};
				\addlegendentry{$w(u)=\big(1+A^{-1}|u|^{\rho}\big)^{-1}$}
			\end{axis}
		\end{tikzpicture}
		\caption{Schematic low frequency profile. The information is concentrated on the scale $\lambda\asymp\Delta_n$, hence after the rescaling $\lambda=\Delta_nu$ the effective weight becomes $w(u)$.}
		\label{fig:weight}
	\end{figure}
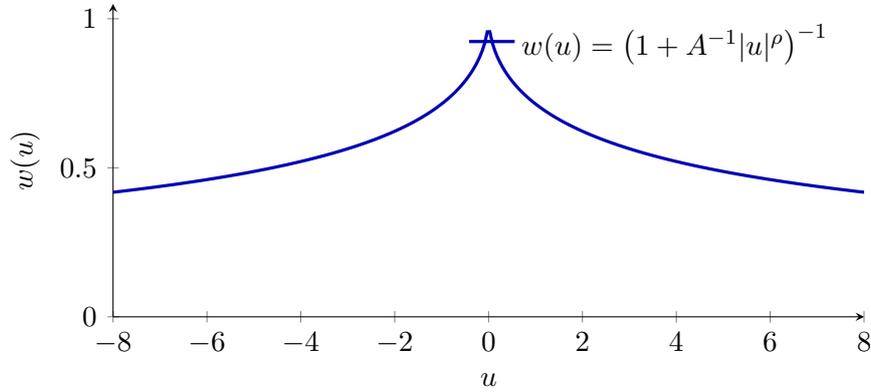
	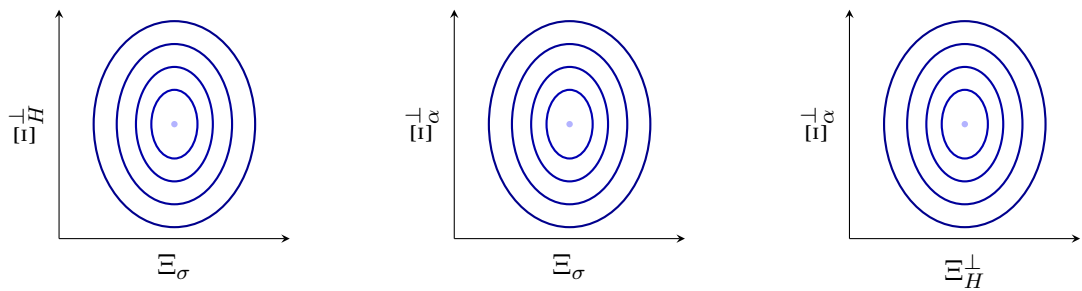
\begin{figure}[htbp]
		\centering
		\begin{tikzpicture}
			\foreach \shift/\xlabel/\ylabel in {
				0/{$\Xi_{\sigma}$}/{$\Xi_H^{\perp}$},
				5.2/{$\Xi_{\sigma}$}/{$\Xi_{\alpha}^{\perp}$},
				10.4/{$\Xi_H^{\perp}$}/{$\Xi_{\alpha}^{\perp}$}}
			{
				\begin{scope}[xshift=\shift cm]
					\begin{axis}[
						width=0.29\textwidth,
						height=0.29\textwidth,
						axis lines=left,
						xtick=\empty, ytick=\empty,
						xmin=-3,xmax=3,ymin=-3,ymax=3,
						xlabel=\xlabel,ylabel=\ylabel]
						\draw[blue!70!black, thick] (axis cs:0,0) ellipse [x radius=0.6, y radius=0.9];
						\draw[blue!65!black, thick] (axis cs:0,0) ellipse [x radius=1.0, y radius=1.5];
						\draw[blue!60!black, thick] (axis cs:0,0) ellipse [x radius=1.5, y radius=2.1];
						\draw[blue!55!black, thick] (axis cs:0,0) ellipse [x radius=2.1, y radius=2.7];
						\fill[blue!30] (axis cs:0,0) circle[radius=1.2pt];
					\end{axis}
				\end{scope}
			}
		\end{tikzpicture}
		\caption{Pairwise contour plots for the projected Gaussian limit. Because the limiting covariance matrix is diagonal in the projected coordinates, the contours are axis-aligned. This is the most informative visual summary in dimension three.}
		\label{fig:pairwise}
	\end{figure}
	\newpage
	\section{Concluding summary}
	The full mfOU LAN proof follows the same conceptual mechanism in \cite{Cai2026}, but one extra parameter forces a third projection step. The essential chain is
	\[
	\begin{aligned}
		\text{raw scores}
		&\longrightarrow \text{remove explicit $\log\Delta_n$ from $H$}\\
		&\longrightarrow \text{project $H$ against $\sigma$}\\
		&\longrightarrow \text{project $\alpha$ against $(\sigma,H^{\perp})$}\\
		&\longrightarrow \text{diagonal CLT}\\
		&\longrightarrow \text{non-diagonal rate matrix and LAN}.
	\end{aligned}
	\]
	The rate matrix is not guessed in advance. It is read directly from the exact score identities. This is the cleanest way to justify the op/F approach and to see why the final normalization is lower triangular rather than diagonal.

	\appendix
	\section{Why the rate matrix is non-diagonal}
	The mfBm paper \cite{Cai2026} stresses that in the supercritical regime the natural local scale is not obtained by a diagonal normalization in the original parameter coordinates. The same happens here. Indeed,
	\[
	S_{H,n}=\sigma\log\Delta_n\,S_{\sigma,n}+R_{H,n},
	\]
	so the raw \(H\)-score contains an explicit singular component in the \(\sigma\)-direction. Writing
	\[
	R_{H,n}=a_nS_{\sigma,n}+R_{H,n}^{\perp},
	\]
	we obtain
	\[
	S_{H,n}=\beta_n(\theta)S_{\sigma,n}+R_{H,n}^{\perp},
	\qquad
	\beta_n(\theta):=\sigma\log\Delta_n+a_n.
	\]
	Thus a diagonal normalization in the original \((\sigma,H,\alpha)\) coordinates does not separate the effective score directions. The lower-triangular matrix \(M_n\) removes this collinearity.
	
	After that, the $\alpha$-direction still has non-zero covariance with the first two directions and the second projection $M_n^{(3)}$ removes those terms. In this sense the matrix $M_n$ is not an arbitrary algebraic device. It is the canonical implementation of Gram--Schmidt orthogonalization for the three raw score directions.
	
	\section{Simulation blueprint in the style of \cite{Cai2026}}
	To extend the current manuscript further toward the paper \cite{Cai2026} , the natural simulation section should contain the following three blocks.
	
	\paragraph{Projected full-rank limit.}
	Fix one supercritical point, for instance $(\sigma,H,\alpha)=(1,0.8,1)$, simulate the exact Gaussian vector $X^{(n)}$ from its Toeplitz covariance, compute the transformed score
	\[
	\Xi_n=\frac{1}{\sqrt{n\Delta_n}}M_n\nabla\ell_n(\theta),
	\]
	and compare the empirical covariance with the diagonal limit $\Iperp(\theta)$.
	
	\paragraph{Pairwise Gaussian diagnostics.}
	Plot the three empirical bivariate clouds for the pairs
	$(\Xi_{\sigma},\Xi_H^{\perp})$, $(\Xi_{\sigma},\Xi_{\alpha}^{\perp})$, and $(\Xi_H^{\perp},\Xi_{\alpha}^{\perp})$.
	Overlay each cloud with the corresponding Gaussian contour. Since the limiting covariance is diagonal, every contour should be axis-aligned.
	
	\paragraph{Raw versus projected comparison.}
	A particularly instructive figure, not present in the two-dimensional mfBm case, is to compare the raw pair $(S_{\sigma,n},S_{H,n})$ with the projected pair $(S_{\sigma,n},R_{H,n}^{\perp})$. The first plot should display the near-collinearity caused by the diverging $\log\Delta_n$ term, while the second should recover a full-rank cloud. This figure directly visualizes why the non-diagonal rate matrix is necessary.

	\section{\texorpdfstring{The regime $1/2<H<3/4$: complete score CLT in the subcritical long-memory case}{The regime 1/2<H<3/4: complete score CLT in the subcritical long-memory case}}
	In this appendix we parallel Appendix~C of the \cite{Cai2026}, but with the mfOU sampled spectral density and the extra drift parameter $\alpha$. Throughout this section we assume
	\[
	\frac12<H<\frac34,
	\qquad
	p:=2H-1\in(0,1/2),
	\qquad
	\Delta_n=n^{-\kappa},\ \kappa\in(0,1).
	\]
	Then the exact score decomposition
	\[
	S_{H,n}=\sigma\log\Delta_n\,S_{\sigma,n}+R_{H,n}
	\]
	remains valid. In contrast with the supercritical regime $H>3/4$, after removing the explicit deterministic $\sigma\log\Delta_n$ term no second projection is needed for the $(\sigma,H)$-block: the pair $(S_{\sigma,n},R_{H,n})$ already has a non-degenerate Gaussian limit. The $\alpha$-score lives on the smaller scale $(n\Delta_n)^{1/2}$ and becomes asymptotically orthogonal, after normalization, to the first two coordinates.
	
	\subsection*{C.1. Trace approximations at the scale $n\Delta_n^{2p}$ for the $(\sigma,H)$-block}
	Let
	\[
	M_{\sigma,n}:=\Sigma_n^{-1/2}\partial_\sigma\Sigma_n\Sigma_n^{-1/2},
	\qquad
	D_{H,n}:=\Sigma_n^{-1/2}T_n(r_{\Delta_n})\Sigma_n^{-1/2},
	\]
	so that
	\[
	S_{\sigma,n}=\frac12Q_n(M_{\sigma,n}),
	\qquad
	R_{H,n}=\frac12Q_n(D_{H,n}).
	\]
	Write the symbol ratios
	\[
	g_{\sigma,n}(\lambda):=\frac{\partial_\sigma f_{\Delta_n}(\lambda;\theta)}{f_{\Delta_n}(\lambda;\theta)},
	\qquad
	h_n(\lambda):=\frac{r_{\Delta_n}(\lambda;\theta)}{f_{\Delta_n}(\lambda;\theta)}.
	\]
	In the regime $1/2<H<3/4$ the natural fixed-frequency renormalization is by $\Delta_n^p$. Define
	\[
	\bar g_n(\lambda):=\Delta_n^{-p}g_{\sigma,n}(\lambda),
	\qquad
	\bar h_n(\lambda):=\Delta_n^{-p}h_n(\lambda).
	\]
	The mfOU low frequency expansion gives, uniformly for $\lambda\in[-\pi,\pi]\setminus\{0\}$,
	\begin{equation}\label{eq:subcritical-envelope}
		|\bar g_n(\lambda)|\le C\bigl(1+|\lambda|^{-p}\bigr),
		\qquad
		|\bar h_n(\lambda)|\le C\bigl(1+|\lambda|^{-p}(1+|\log|\lambda||)\bigr).
	\end{equation}
	Because $p<1/2$, both envelopes belong to $L^2([ -\pi,\pi])$.
	
	\begin{lemma}\label{lem:subcritical-trace}
		Assume $1/2<H<3/4$. Then
		\[
		\tr(M_{\sigma,n}^2)
		=\frac{n\Delta_n^{2p}}{2\pi}\int_{-\pi}^{\pi}\bar g_n(\lambda)^2\,\dd\lambda+o(n\Delta_n^{2p}),
		\]
		\[
		\tr(M_{\sigma,n}D_{H,n})
		=\frac{n\Delta_n^{2p}}{2\pi}\int_{-\pi}^{\pi}\bar g_n(\lambda)\bar h_n(\lambda)\,\dd\lambda+o(n\Delta_n^{2p}),
		\]
		\[
		\tr(D_{H,n}^2)
		=\frac{n\Delta_n^{2p}}{2\pi}\int_{-\pi}^{\pi}\bar h_n(\lambda)^2\,\dd\lambda+o(n\Delta_n^{2p}).
		\]
	\end{lemma}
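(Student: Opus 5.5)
The plan is to follow the same two-stage reduction used for Lemmas~\ref{lem:C-trace}, \ref{lem:D-trace} and \ref{lem:an-expansion}: first replace the whitened matrices by Toeplitz matrices generated by the symbol ratios, then pass from matrix traces to spectral integrals via the Fej\'er-kernel identity. The new feature is the scale. In the subcritical regime the ratios $g_{\sigma,n}$ and $h_n$ are small, of order $\Delta_n^{p}$, at fixed frequencies, and they are not localized at $|\lambda|\asymp\Delta_n$. I would therefore work throughout with the renormalized symbols $\bar g_n,\bar h_n$ and the envelope bounds \eqref{eq:subcritical-envelope}, and aim for errors of order $o(n\Delta_n^{2p})$.

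\textbf{Step 1 (similarity and sandwich).} By cyclicity, $\tr(M_{\sigma,n}D_{H,n})=\tr(\Sigma_n^{-1}T_n(\partial_\sigma f)\Sigma_n^{-1}T_n(r_{\Delta_n}))$, and similarly for the two squares. Set $G_n:=T_n(\bar g_n)$ and $K_n:=T_n(\bar h_n)$. I will show
\[
\Delta_n^{-p}\Sigma_n^{-1}T_n(\partial_\sigma f)=G_n+R^{(1)}_n,
\qquad
\Delta_n^{-p}\Sigma_n^{-1}T_n(r_{\Delta_n})=K_n+R^{(2)}_n,
\]
with $\norm{R^{(i)}_n}_{\F}=o(n^{1/2})$. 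The route is the standard Toeplitz-inverse approximation $T_n(f)^{-1}\approx T_n(1/f)$ combined with $T_n(1/f)T_n(\phi)\approx T_n(\phi/f)$. Both are controlled by Avram/B\"ottcher--Silbermann type Hilbert--Schmidt bounds for $\|T_n(a)T_n(b)-T_n(ab)\|_{\F}^2=o(n)$ when $a,b$ satisfy integrability conditions. Here $f_{\Delta_n}$ is bounded below by the Brownian component $f^{(W)}_{\Delta_n}\gtrsim 1$ away from $\lambda=0$, and this is the key simplification compared with $H>3/4$.

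Once the Frobenius bounds are in hand, the trace expansions follow from Cauchy--Schwarz. For example, $\tr((G_n+R^{(1)}_n)(K_n+R^{(2)}_n))=\tr(G_nK_n)+o(n)$, because $\norm{G_n}_{\F},\norm{K_n}_{\F}=O(n^{1/2})$ by the $L^2$ envelopes. Multiplying back by $\Delta_n^{2p}$ gives the $o(n\Delta_n^{2p})$ error.

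\textbf{Step 2 (Fej\'er).} For $L^2$ symbols, $\tr(T_n(a)T_n(b))=\frac{n}{2\pi}\int ab\,F_n$-smoothed, and this equals $\frac{n}{2\pi}\int ab+o(n)$. To make the $o(n)$ uniform in $n$ (the symbols depend on $n$), I would dominate $|\bar g_n\bar h_n|$ by the fixed integrable product of the envelopes in \eqref{eq:subcritical-envelope} and use the pointwise convergence of $\bar g_n,\bar h_n$ to their $\Delta\to0$ limits away from $\lambda=0$. Uniformity then follows by dominated convergence, together with an approximation of the dominating envelope by trigonometric polynomials.

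\textbf{Main obstacle.} The hard part is the uniform-in-$n$ Frobenius control of $R^{(i)}_n$ near $\lambda=0$. There the OU denominator $(\alpha\Delta_n)^2+\lambda^2$ makes $f_{\Delta_n}$ peak at height $\asymp\Delta_n^{-2}$ on $|\lambda|\lesssim\Delta_n$, while the fractional and Brownian pieces share that factor. My first attempt would be to factor out the common OU filter $|1-e^{-\alpha\Delta_n}e^{i\lambda}|^{-2}$, that is, write $f=|\phi_n|^{-2}\tilde f$ with $\tilde f$ uniformly comparable to $1+\Delta_n^{p}|\lambda|^{-p}$. The OU factor then becomes an explicit banded (AR(1)) structure whose inverse is an exactly tridiagonal matrix up to rank-two boundary corrections. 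Those corrections contribute $O(1)$ to the traces, so it would remain to apply the sandwich estimates to the well-conditioned symbol $\tilde f$.
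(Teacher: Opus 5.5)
Your proposal is correct and follows the paper's proof. Similarity and cyclicity reduce everything to products of $\Sigma_n^{-1}T_n(\cdot)$; the Toeplitz sandwich (after factoring out $\Delta_n^{p}$) replaces these by $T_n(\bar g_n)$ and $T_n(\bar h_n)$ with an $o(n)$ trace error; and the Avram/Fej\'er trace theorem with the $L^2$ envelopes \eqref{eq:subcritical-envelope} yields the integrals. Your factorization of the common AR(1) filter $|1-e^{-\alpha\Delta_n}e^{i\lambda}|^{-2}$ is a good way to actually justify the sandwich step, which the paper only asserts by analogy with Lemma~\ref{lem:C-trace}. One small point: in the paper's normalization $f^{(W)}_{\Delta_n}\asymp\Delta_n$ away from the origin and is of order $\Delta_n^{-1}$ on $|\lambda|\lesssim\Delta_n$, so your lower bound $\gtrsim 1$ and peak height $\asymp\Delta_n^{-2}$ are relative to the floor $\Delta_n$. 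This is harmless, since only symbol ratios enter.
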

	
	\begin{proof}
		Set
		\[
		B_{\sigma,n}:=\Sigma_n^{-1}T_n(\partial_\sigma f_{\Delta_n}),
		\qquad
		B_{H,n}:=\Sigma_n^{-1}T_n(r_{\Delta_n}).
		\]
		Since $M_{\sigma,n}$ is similar to $B_{\sigma,n}$ and $D_{H,n}$ is similar to $B_{H,n}$, cyclicity of the trace reduces the problem to products of $B_{\sigma,n}$ and $B_{H,n}$. Now write
		\[
		G_n:=T_n(\bar g_n),
		\qquad
		H_n:=T_n(\bar h_n).
		\]
		The same Toeplitz sandwich reduction used in the proof of lemma~\ref{lem:C-trace} applies here as well: after factoring out $\Delta_n^p$, the difference between the whitened products $B_{\sigma,n}^2$, $B_{\sigma,n}B_{H,n}$, $B_{H,n}^2$ and the pure Toeplitz products $G_n^2$, $G_nH_n$, $H_n^2$ contributes only $o(n)$ to the trace. Consequently,
		\[
		\tr(M_{\sigma,n}^2)=\Delta_n^{2p}\tr(G_n^2)+o(n\Delta_n^{2p}),
		\]
		\[
		\tr(M_{\sigma,n}D_{H,n})=\Delta_n^{2p}\tr(G_nH_n)+o(n\Delta_n^{2p}),
		\qquad
		\tr(D_{H,n}^2)=\Delta_n^{2p}\tr(H_n^2)+o(n\Delta_n^{2p}).
		\]
		Because $p\in(0,1/2)$, the envelopes in \eqref{eq:subcritical-envelope} lie in $L^2([{-}\pi,\pi])$. Hence the triangular-array Avram trace theorem applies to the three pairs $(\bar g_n,\bar g_n)$, $(\bar g_n,\bar h_n)$, and $(\bar h_n,\bar h_n)$:
		\[
		\tr(G_n^2)=\frac{n}{2\pi}\int_{-\pi}^{\pi}\bar g_n(\lambda)^2\,\dd\lambda+o(n),
		\]
		\[
		\tr(G_nH_n)=\frac{n}{2\pi}\int_{-\pi}^{\pi}\bar g_n(\lambda)\bar h_n(\lambda)\,\dd\lambda+o(n),
		\qquad
		\tr(H_n^2)=\frac{n}{2\pi}\int_{-\pi}^{\pi}\bar h_n(\lambda)^2\,\dd\lambda+o(n).
		\]
		Multiplying back the factor $\Delta_n^{2p}$ proves the three expansions.
	\end{proof}
	
	\begin{lemma}\label{lem:subcritical-integrals}
		Assume $1/2<H<3/4$. Then there exist functions $\bar g^{(<)}_{\sigma},\bar h^{(<)}\in L^2([ -\pi,\pi])$ such that
		\[
		\bar g_n\to \bar g^{(<)}_{\sigma},
		\qquad
		\bar h_n\to \bar h^{(<)}
		\qquad\text{in }L^2([ -\pi,\pi]),
		\]
		and hence
		\[
		\frac{1}{n\Delta_n^{2p}}\tr(M_{\sigma,n}^2)\to K_{\sigma\sigma}^{(<)}(\theta),
		\qquad
		\frac{1}{n\Delta_n^{2p}}\tr(M_{\sigma,n}D_{H,n})\to K_{\sigma H}^{(<)}(\theta),
		\]
		\[
		\frac{1}{n\Delta_n^{2p}}\tr(D_{H,n}^2)\to K_{HH}^{(<)}(\theta),
		\]
		where
		\[
		K_{\sigma\sigma}^{(<)}(\theta):=\frac{1}{2\pi}\int_{-\pi}^{\pi}\bigl(\bar g^{(<)}_{\sigma}(\lambda)\bigr)^2\,\dd\lambda,
		\qquad
		K_{\sigma H}^{(<)}(\theta):=\frac{1}{2\pi}\int_{-\pi}^{\pi}\bar g^{(<)}_{\sigma}(\lambda)\bar h^{(<)}(\lambda)\,\dd\lambda,
		\]
		\[
		K_{HH}^{(<)}(\theta):=\frac{1}{2\pi}\int_{-\pi}^{\pi}\bigl(\bar h^{(<)}(\lambda)\bigr)^2\,\dd\lambda.
		\]
		Moreover,
		\[
		K_{\sigma\sigma}^{(<)}(\theta)>0,
		\qquad
		K_{HH}^{(<)}(\theta)>0,
		\qquad
		K_{\sigma\sigma}^{(<)}(\theta)K_{HH}^{(<)}(\theta)-\bigl(K_{\sigma H}^{(<)}(\theta)\bigr)^2>0.
		\]
	\end{lemma}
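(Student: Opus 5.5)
The plan is to identify explicit pointwise limits of the renormalized ratios $\bar g_n,\bar h_n$ on $[-\pi,\pi]\setminus\{0\}$. From these we get $L^2$ convergence by dominated convergence using the envelopes \eqref{eq:subcritical-envelope}, then the trace limits via Lemma~\ref{lem:subcritical-trace}, and finally positivity by a strict Cauchy--Schwarz argument.

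\emph{Step 1 (pointwise limits).} Fix $\lambda\neq0$ and let $\Delta_n\to0$. In \eqref{eq:fH} the OU term $(\alpha\Delta_n)^2$ becomes negligible against $(\lambda+2\pi k)^2$, so
\[
f^{(H)}_{\Delta_n}(\lambda)\sim\sigma^2C(H)\Delta_n^{2H}S_H(\lambda),
\qquad
S_H(\lambda):=\sum_{k}|\lambda+2\pi k|^{-1-2H}.
\]
In \eqref{eq:fW} we get $f^{(W)}_{\Delta_n}(\lambda)\sim\Delta_n/(2(1-\cos\lambda))$. Hence $f^{(H)}/f^{(W)}=O(\Delta_n^{p})\to0$, so $f_{\Delta_n}\sim f^{(W)}_{\Delta_n}$. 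This gives
\[
\bar g_n(\lambda)\to\bar g^{(<)}_\sigma(\lambda):=\frac2\sigma\,\varphi(\lambda),
\qquad
\varphi(\lambda):=\sigma^2C(H)\,S_H(\lambda)\,|2\sin(\lambda/2)|^2 .
\]
For the $H$-direction, $r_{\Delta}=f^{(H)}_\Delta\big(b+\partial_H\log S_{H,\Delta}\big)$, where the $\log\Delta$ part has already been removed in \eqref{eq:H-symbol-split}. Since $\partial_H\log S_{H,\Delta_n}(\lambda)\to\partial_H\log S_H(\lambda)$ for fixed $\lambda\ne0$, we obtain
\[
\bar h_n\to\bar h^{(<)}:=\varphi\,\big(b+\partial_H\log S_H\big).
\]
Near $0$ one has $\varphi(\lambda)\asymp|\lambda|^{-p}$ and $\partial_H\log S_H(\lambda)=-2\log|\lambda|+O(1)$. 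This is consistent with the envelopes.

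\emph{Step 2 ($L^2$ convergence and traces).} The envelopes \eqref{eq:subcritical-envelope} hold uniformly in $n$, including on the region $|\lambda|\lesssim\Delta_n$ where the pointwise expansion is not valid, and they are square-integrable because $p<1/2$. Dominated convergence therefore gives $\bar g_n\to\bar g^{(<)}_\sigma$ and $\bar h_n\to\bar h^{(<)}$ in $L^2$. Since products of $L^2$-convergent sequences converge in $L^1$, the three integrals in Lemma~\ref{lem:subcritical-trace} converge to $2\pi K^{(<)}_{\sigma\sigma}$, $2\pi K^{(<)}_{\sigma H}$ and $2\pi K^{(<)}_{HH}$. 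Dividing the expansions of that lemma by $n\Delta_n^{2p}$ yields the three stated limits.

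\emph{Step 3 (positivity).} $K^{(<)}_{\sigma\sigma}>0$ because $\varphi>0$ on $(0,\pi]$. By Cauchy--Schwarz in $L^2([-\pi,\pi])$, the determinant is nonnegative, with equality only if $\bar h^{(<)}=c\,\bar g^{(<)}_\sigma$ a.e. for some constant $c$. That would mean $b+\partial_H\log S_H(\lambda)=2c/\sigma$ a.e. This is impossible, since the left side behaves like $-2\log|\lambda|$ as $\lambda\to0$ and is unbounded. Hence the determinant is strictly positive. Then $K^{(<)}_{HH}>0$ follows, since $K^{(<)}_{HH}K^{(<)}_{\sigma\sigma}>(K^{(<)}_{\sigma H})^2\ge0$.

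The main obstacle is Step 1: justifying the exchange of limit and sum in $\partial_H\log S_{H,\Delta_n}$. I would handle it by uniform convergence of the $k$-series, which is dominated by $\sum_k|\lambda+2\pi k|^{-1-2H}(1+\log|\lambda+2\pi k|)$ for $\lambda$ bounded away from $0$. The small-$|\lambda|$ region is covered entirely by the envelope in Step 2.
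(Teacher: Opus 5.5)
Your proposal is correct and follows the same route as the paper: pointwise limits at fixed $\lambda\neq0$, $L^2$ convergence by dominated convergence under the envelopes \eqref{eq:subcritical-envelope}, the trace limits from Lemma~\ref{lem:subcritical-trace}, and positivity from the non-collinearity of the two limiting functions. You are more explicit than the paper in two places. You identify the limits as $\bar g^{(<)}_\sigma=(2/\sigma)\varphi$ and $\bar h^{(<)}=\varphi\,(b+\partial_H\log S_H)$, and you make the non-collinearity rigorous through a strict Cauchy--Schwarz argument using the $-2\log|\lambda|$ divergence of $b+\partial_H\log S_H$ at the origin.
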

	
	\begin{proof}
		For every fixed $\lambda\neq0$, the Brownian OU part contributes only a regular perturbation while the fractional part produces the leading fixed-frequency factor $\Delta_n^p|\lambda|^{-p}$. Consequently $\bar g_n(\lambda)$ and $\bar h_n(\lambda)$ converge pointwise to limits $\bar g^{(<)}_{\sigma}(\lambda)$ and $\bar h^{(<)}(\lambda)$. The $L^2$ convergence follows from dominated convergence using \eqref{eq:subcritical-envelope}. The trace limits then follow from lemma \ref{lem:subcritical-trace}. Finally, strict positivity and non-degeneracy are a direct consequence of the fact that the two limiting functions are not collinear: $\bar h^{(<)}$ contains the logarithmic factor inherited from the $H$-derivative, whereas $\bar g^{(<)}_{\sigma}$ does not.
	\end{proof}
	
	\subsection*{C.2. Operator/Frobenius ratios in the subcritical regime}
	Define the subcritical normalization
	\[
	v_n:=\sqrt n\,\Delta_n^p.
	\]
	This is the exact analogue of the normalization used in Appendix~C of \cite{Cai2026}, the only difference being that the limiting constants now depend on $(\alpha,\sigma,H)$ through the mfOU filter.
	
	\begin{lemma}\label{lem:subcritical-opf}
		Assume $1/2<H<3/4$. Then
		\[
		\norm{M_{\sigma,n}}_{\op}=O(\Delta_n^p n^p),
		\qquad
		\norm{D_{H,n}}_{\op}=O(\Delta_n^p n^p\log n),
		\]
		while
		\[
		\norm{M_{\sigma,n}}_{\F}^2\asymp n\Delta_n^{2p},
		\qquad
		\norm{D_{H,n}}_{\F}^2\asymp n\Delta_n^{2p}.
		\]
		Consequently,
		\[
		\frac{\norm{M_{\sigma,n}}_{\op}}{\norm{M_{\sigma,n}}_{\F}}\to0,
		\qquad
		\frac{\norm{D_{H,n}}_{\op}}{\norm{D_{H,n}}_{\F}}\to0.
		\]
	\end{lemma}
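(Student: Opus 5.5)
The proof has three parts: the operator bounds via the generalized Rayleigh quotient, the Frobenius bounds from the trace asymptotics already proved, and then the ratio.

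\textbf{Operator bounds.} By Remark~\ref{rem:gen-rayleigh-whitened},
\[
\norm{M_{\sigma,n}}_{\op}=\sup_{x\ne0}\frac{x^\top T_n(\partial_\sigma f_{\Delta_n})x}{x^\top T_n(f_{\Delta_n})x}.
\]
Bounding this by $\sup_\lambda|g_{\sigma,n}|\le 2/\sigma$ would give $O(1)$. That is already stronger than $O(\Delta_n^pn^p)$ whenever $n\Delta_n\to\infty$, since then $(n\Delta_n)^p\to\infty$. For the stated form we use the envelope \eqref{eq:subcritical-envelope} instead. It gives $|g_{\sigma,n}(\lambda)|\le C\Delta_n^p(1+|\lambda|^{-p})$. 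As in Lemma~\ref{lem:D-op}, frequencies below order $1/n$ are not resolved by an $n\times n$ Toeplitz quadratic form, so we cut off at $|\lambda|\gtrsim 1/n$. On that range the envelope is at most $C\Delta_n^pn^p$.

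For $D_{H,n}$ we argue the same way with the envelope for $\bar h_n$. The extra factor $(1+|\log|\lambda||)$ is at most $C\log n$ on $|\lambda|\gtrsim1/n$. This gives $\norm{D_{H,n}}_{\op}=O(\Delta_n^pn^p\log n)$.

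\textbf{Frobenius bounds.} We read these off Lemmas~\ref{lem:subcritical-trace} and~\ref{lem:subcritical-integrals}. They give $\tr(M_{\sigma,n}^2)\sim n\Delta_n^{2p}K_{\sigma\sigma}^{(<)}$ and $\tr(D_{H,n}^2)\sim n\Delta_n^{2p}K_{HH}^{(<)}$. Both constants are strictly positive, hence $\norm{\cdot}_\F^2\asymp n\Delta_n^{2p}$.

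\textbf{Ratios.} Combining the two parts,
\[
\frac{\norm{M_{\sigma,n}}_{\op}}{\norm{M_{\sigma,n}}_\F}\lesssim \frac{\Delta_n^pn^p}{\sqrt n\,\Delta_n^p}=n^{p-1/2}\to0,
\]
because $p<1/2$. Similarly,
\[
\frac{\norm{D_{H,n}}_{\op}}{\norm{D_{H,n}}_\F}\lesssim n^{p-1/2}\log n\to0.
\]

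\textbf{Main obstacle.} The delicate step is making the cutoff heuristic rigorous, i.e.\ that $\sup_x x^\top T_n(g)x/x^\top x$ effectively sees $g$ only down to frequency $\sim1/n$. Plan: split $g=g\mathbf 1_{|\lambda|>1/n}+g\mathbf 1_{|\lambda|\le1/n}$. The first piece is bounded in sup norm by the envelope value at $1/n$. For the second piece, bound the operator norm by the Frobenius norm. Its Toeplitz entries satisfy $|\widehat{g\mathbf 1}(k)|\le\frac1{2\pi}\int_{|\lambda|\le1/n}|g|\le C\Delta_n^p n^{p-1}$, so its Frobenius norm is $\le n\cdot C\Delta_n^pn^{p-1}=C\Delta_n^pn^p$. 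The same estimate works with the logarithmic factor for $\bar h_n$, giving $C\Delta_n^pn^p\log n$. Both pieces therefore have the claimed order.

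One point needs care here. The sandwiched matrices $M_{\sigma,n}$ and $D_{H,n}$ involve $\Sigma_n^{-1/2}$, not just $T_n(g)$. This is handled by the Rayleigh quotient: the numerator is $x^\top T_n(\partial f)x$ and the denominator $x^\top T_n(f)x\ge x^\top T_n(f^{(W)})x\ge c\,\Delta_n\|x\|^2$. This needs checking against the scale of $\partial f$. It is cleaner to apply the splitting directly to the ratio symbol and then bound the whitening error as in the sandwich reduction of Lemma~\ref{lem:C-trace}.
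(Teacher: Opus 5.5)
Your outline is the paper's own proof. It obtains operator bounds from the envelopes \eqref{eq:subcritical-envelope} with an effective cut-off at frequency $1/n$ and passes them to the whitened matrices through the generalized Rayleigh quotient. It reads the Frobenius orders off Lemmas~\ref{lem:subcritical-trace} and~\ref{lem:subcritical-integrals}, and closes with the ratio $n^{p-1/2}$. Your $M_{\sigma,n}$ part is fully rigorous and sharper than needed: $0\le\partial_\sigma f_{\Delta_n}\le(2/\sigma)f_{\Delta_n}$ pointwise gives $\norm{M_{\sigma,n}}_{\op}\le 2/\sigma$ directly. The Frobenius and ratio steps are also fine.

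The genuine gap is the transfer of the cut-off to $D_{H,n}$. The paper passes over it in the same way, saying only that this is ``done exactly as in the main text through the generalized Rayleigh quotient''. Here the plain Rayleigh bound is empty: $w_{\Delta_n}(\lambda)\to1$ and $\psi_{\Delta_n}(\lambda)\sim-2\log|\lambda|$ as $\lambda\to0$, so $\sup_\lambda|h_n|=\infty$. Your splitting is correct, but it only bounds the unwhitened $T_n(\bar h_n)$. Neither of your transfer options closes the gap:
\begin{itemize}
\item For $|\lambda|\le1/n\ll\Delta_n$ one has $|r_{\Delta_n}(\lambda)|\asymp\Delta_n^{p-1}|\lambda|^{-p}\log(1/|\lambda|)$. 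Combining $|\sum_j x_je^{ij\lambda}|^2\le n\norm{x}^2$ with $x^\top T_n(f_{\Delta_n})x\ge c\Delta_n\norm{x}^2$ therefore bounds the low-frequency quotient only by $\Delta_n^{p-2}n^p\log n$. That is a factor $\Delta_n^{-2}$ too large, and in general it is not $o(\sqrt n\,\Delta_n^p)$.
\item The sandwich option needs an operator-norm bound on $\Sigma_n^{-1}T_n(r_{\Delta_n})-T_n(h_n)$. Lemma~\ref{lem:subcritical-trace} is only a trace-level statement, so this just relocates the difficulty.
\end{itemize}

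What is missing is to split the \emph{numerator} symbol inside the Rayleigh quotient and to treat the low piece with a reproducing kernel rather than the crude sup bound. Put $\hat x(\lambda):=\sum_{j=1}^n x_je^{ij\lambda}$.

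On $|\lambda|>1/n$ you have $|r_{\Delta_n}|\le\bigl(\sup_{|\lambda|>1/n}|h_n|\bigr)f_{\Delta_n}$ pointwise. Since $0\le w_{\Delta_n}\le1$ and $|\psi_{\Delta_n}(\lambda)|\lesssim 1+|\log|\lambda||$, this supremum is $O(\log n)$.

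For $|\mu|\le 1/n$, apply Cauchy--Schwarz against a de la Vall\'ee Poussin kernel, which reproduces all frequencies $|k|\le n$:
\[
|\hat x(\mu)|^2\le C\int_{-\pi}^{\pi}K_n(\lambda-\mu)\,|\hat x(\lambda)|^2\,\dd\lambda,
\qquad
K_n(\lambda):=\min\bigl(n,(n\lambda^2)^{-1}\bigr).
\]
So the low piece of the numerator is at most $C\int k_n|\hat x|^2\,\dd\lambda$, where $k_n(\lambda):=\int_{|\mu|\le1/n}|r_{\Delta_n}(\mu)|K_n(\lambda-\mu)\,\dd\mu$.

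To compare $k_n$ with $f_{\Delta_n}$, use two lower bounds:
\begin{itemize}
\item $f_{\Delta_n}(\lambda)\gtrsim\Delta_n^{p-1}|\lambda|^{-p}$ for $|\lambda|\le\Delta_n$, from the $k=0$ term of \eqref{eq:fH};
\item $f_{\Delta_n}(\lambda)\ge f^{(W)}_{\Delta_n}(\lambda)\gtrsim\Delta_n\lambda^{-2}$ for $\Delta_n\le|\lambda|\le\pi$.
\end{itemize}
With these, $k_n/f_{\Delta_n}$ is $\lesssim\log n$ on $|\lambda|\le 2/n$, $\lesssim(n|\lambda|)^{p-2}\log n$ on $2/n\le|\lambda|\le\Delta_n$, and $\lesssim(n\Delta_n)^{p-2}\log n$ on $|\lambda|\ge\Delta_n$.

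Hence $\norm{D_{H,n}}_{\op}=O(\log n)$. This implies the stated $O(\Delta_n^pn^p\log n)$ and gives the ratio $\lesssim\log n/(\sqrt n\,\Delta_n^p)\to0$.
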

	
	\begin{proof}
		The renormalized symbols $\bar g_n$ and $\bar h_n$ are of Fisher--Hartwig type with exponent $p\in(0,1/2)$. Standard Toeplitz operator-norm bounds therefore give
		\[
		\norm{T_n(\bar g_n)}_{\op}=O(n^p),
		\qquad
		\norm{T_n(\bar h_n)}_{\op}=O(n^p\log n),
		\]
		with the extra $\log n$ coming from the logarithmic factor in the $H$-derivative; compare \cite[Ch.~5]{BottcherSilbermann2006} and \cite{Gray2006}. Passing from these Toeplitz matrices back to $M_{\sigma,n}$ and $D_{H,n}$ is done exactly as in the main text through the generalized Rayleigh quotient in remark~\ref{rem:gen-rayleigh-whitened}: after the deterministic factor $\Delta_n^p$ is restored, the same symbol-ratio bounds give
		\[
		\norm{M_{\sigma,n}}_{\op}=O(\Delta_n^pn^p),
		\qquad
		\norm{D_{H,n}}_{\op}=O(\Delta_n^pn^p\log n).
		\]
		The Frobenius asymptotics follow from lemmas \ref{lem:subcritical-trace} and \ref{lem:subcritical-integrals}. Therefore
		\[
		\frac{\norm{M_{\sigma,n}}_{\op}}{\norm{M_{\sigma,n}}_{\F}}
		\lesssim \frac{\Delta_n^pn^p}{\sqrt n\,\Delta_n^p}=n^{p-1/2}\to0,
		\]
		\[
		\frac{\norm{D_{H,n}}_{\op}}{\norm{D_{H,n}}_{\F}}
		\lesssim \frac{\Delta_n^pn^p\log n}{\sqrt n\,\Delta_n^p}=n^{p-1/2}\log n\to0,
		\]
		because $p<1/2$.
	\end{proof}
	
	\subsection*{C.3. Joint CLT for the transformed $(\sigma,H)$-scores}
	As in the main text, the explicit deterministic contribution in the raw $H$-score is removed by the triangular matrix
	\[
	M_n^{(<,1)}:=
	\begin{pmatrix}
		1&0\\
		-\sigma\log\Delta_n&1
	\end{pmatrix},
	\qquad
	M_n^{(<,1)}
	\begin{pmatrix}
		S_{\sigma,n}\\ S_{H,n}
	\end{pmatrix}
	=
	\begin{pmatrix}
		S_{\sigma,n}\\ R_{H,n}
	\end{pmatrix}.
	\]
	
	\begin{proposition}\label{prop:subcritical-block}
		Assume $1/2<H<3/4$. Then
		\[
		\frac1{v_n}
		\begin{pmatrix}
			S_{\sigma,n}\\ R_{H,n}
		\end{pmatrix}
		\dto N\bigl(0,\mathcal I_{\sigma,H}^{(<)}(\theta)\bigr),
		\]
		where
		\[
		\mathcal I_{\sigma,H}^{(<)}(\theta)
		:=\frac12
		\begin{pmatrix}
			K_{\sigma\sigma}^{(<)}(\theta) & K_{\sigma H}^{(<)}(\theta)\\
			K_{\sigma H}^{(<)}(\theta) & K_{HH}^{(<)}(\theta)
		\end{pmatrix}
		\]
		is positive definite. Equivalently,
		\[
		\frac1{v_n}M_n^{(<,1)}
		\begin{pmatrix}
			S_{\sigma,n}\\ S_{H,n}
		\end{pmatrix}
		\dto \mathcal{N}\bigl(0,\mathcal I_{\sigma,H}^{(<)}(\theta)\bigr).
		\]
	\end{proposition}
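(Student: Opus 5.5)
Via Cramér--Wold, the problem reduces to a single centered Gaussian quadratic form, to which Lemma~\ref{lem:qf-clt} applies. Fix $(u,v)\in\R^2\setminus\{0\}$ and set
\[
A_n(u,v):=uM_{\sigma,n}+vD_{H,n},
\qquad\text{so that}\qquad
uS_{\sigma,n}+vR_{H,n}=\tfrac12Q_n\bigl(A_n(u,v)\bigr).
\]
By \eqref{eq:varofGQF} and bilinearity,
\[
\Var\bigl(uS_{\sigma,n}+vR_{H,n}\bigr)=\tfrac12\tr\bigl(A_n(u,v)^2\bigr)
=\tfrac12\bigl(u^2\tr(M_{\sigma,n}^2)+2uv\tr(M_{\sigma,n}D_{H,n})+v^2\tr(D_{H,n}^2)\bigr).
\]
Dividing by $v_n^2=n\Delta_n^{2p}$ and applying Lemma~\ref{lem:subcritical-integrals} gives
\[
\frac{1}{v_n^2}\Var\bigl(uS_{\sigma,n}+vR_{H,n}\bigr)\to (u,v)\,\mathcal I_{\sigma,H}^{(<)}(\theta)\,(u,v)^\top.
\]
Positive definiteness of $\mathcal I_{\sigma,H}^{(<)}(\theta)$ follows directly from the inequalities $K_{\sigma\sigma}^{(<)}>0$ and $K_{\sigma\sigma}^{(<)}K_{HH}^{(<)}-(K_{\sigma H}^{(<)})^2>0$ stated in that lemma. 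Hence $\norm{A_n(u,v)}_{\F}^2\ge c_{u,v}\,n\Delta_n^{2p}$ for large $n$, with $c_{u,v}>0$.

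For the operator norm, the triangle inequality and Lemma~\ref{lem:subcritical-opf} give
\[
\norm{A_n(u,v)}_{\op}\le |u|\norm{M_{\sigma,n}}_{\op}+|v|\norm{D_{H,n}}_{\op}=O(\Delta_n^p n^p\log n).
\]
Combining the two bounds,
\[
\frac{\norm{A_n(u,v)}_{\op}}{\norm{A_n(u,v)}_{\F}}\lesssim n^{p-1/2}\log n\to0,
\]
since $p<1/2$. Lemma~\ref{lem:qf-clt} then yields
\[
\frac{uS_{\sigma,n}+vR_{H,n}}{v_n}\dto \mathcal N\bigl(0,(u,v)\,\mathcal I_{\sigma,H}^{(<)}(u,v)^\top\bigr),
\]
and Cramér--Wold gives the joint limit. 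The equivalent statement with $M_n^{(<,1)}$ is just the identity $S_{H,n}=\sigma\log\Delta_n S_{\sigma,n}+R_{H,n}$ from \eqref{eq:H-score-split}, written in matrix form.

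\textbf{Main obstacle.} The delicate step is the lower bound on the Frobenius norm of the \emph{combination} $A_n(u,v)$. Lemma~\ref{lem:subcritical-opf} bounds each Frobenius norm separately, which is not enough: cancellation between the two directions could make $\norm{A_n(u,v)}_{\F}$ much smaller than either summand. The argument therefore has to rely on the non-collinearity of the limit symbols $\bar g^{(<)}_\sigma$ and $\bar h^{(<)}$, i.e.\ on the determinant condition in Lemma~\ref{lem:subcritical-integrals}. If one wanted to check that condition independently, I would argue that $\bar h^{(<)}=c\,\bar g^{(<)}_\sigma$ in $L^2$ is impossible, because near $\lambda=0$ the ratio $\bar h^{(<)}/\bar g^{(<)}_\sigma$ grows like $|\log|\lambda||$ and so cannot be constant.
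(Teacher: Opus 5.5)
Your proof is correct and follows essentially the same route as the paper: Cram\'er--Wold applied to $u_1M_{\sigma,n}+u_2D_{H,n}$, the operator/Frobenius ratio from Lemma~\ref{lem:subcritical-opf}, the limiting variance from bilinearity and Lemma~\ref{lem:subcritical-integrals}, and then Lemma~\ref{lem:qf-clt}. Your explicit use of the determinant condition $K_{\sigma\sigma}^{(<)}K_{HH}^{(<)}-(K_{\sigma H}^{(<)})^2>0$ to get $\norm{A_n(u,v)}_{\F}^2\gtrsim n\Delta_n^{2p}$ for the combination is a worthwhile sharpening: the paper cites Lemma~\ref{lem:subcritical-opf} for the ratio of the combined matrix, although that lemma only controls $M_{\sigma,n}$ and $D_{H,n}$ separately.
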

	
	\begin{proof}
		Fix $u=(u_1,u_2)\in\R^2$ and define
		\[
		A_n(u):=u_1M_{\sigma,n}+u_2D_{H,n}.
		\]
		Then
		\[
		u_1S_{\sigma,n}+u_2R_{H,n}=\frac12Q_n(A_n(u)).
		\]
		By lemma \ref{lem:subcritical-opf},
		\[
		\frac{\norm{A_n(u)}_{\op}}{\norm{A_n(u)}_{\F}}\to0.
		\]
		Therefore the Gaussian quadratic-form CLT applies to every fixed linear combination. Bilinearity together with lemma \ref{lem:subcritical-integrals} identifies the limiting variance, and the Cram\'er--Wold device yields the joint convergence.
	\end{proof}
	
	\subsection*{C.4. The drift score and the complete three parameter score vector}
	The $\alpha$-direction is more regular and keeps the same $(n\Delta_n)^{1/2}$ scale as in the main text. Let
	\[
	A_{\alpha,n}:=\Sigma_n^{-1/2}\partial_\alpha\Sigma_n\Sigma_n^{-1/2},
	\qquad
	g_{\alpha,n}(\lambda):=\frac{\partial_\alpha f_{\Delta_n}(\lambda;\theta)}{f_{\Delta_n}(\lambda;\theta)}.
	\]
	The low frequency mfOU filter gives, uniformly in $\lambda\in[-\pi,\pi]$,
	\begin{equation}\label{eq:alpha-envelope-subcritical}
		|g_{\alpha,n}(\lambda)|\le \frac{C}{1+(\lambda/\Delta_n)^2},
		\qquad
		g_{\alpha,n}(\Delta_n u)\to q_\alpha(u):=-\frac{2\alpha}{\alpha^2+u^2}.
	\end{equation}
	In particular,
	\[
	\int_{-\pi}^{\pi}g_{\alpha,n}(\lambda)^2\,\dd\lambda\sim \Delta_n\int_{\R}q_\alpha(u)^2\,\dd u=\frac{2\pi\Delta_n}{\alpha}.
	\]
	
	\begin{lemma}\label{lem:subcritical-alpha}
		Assume $1/2<H<3/4$. Then
		\[
		\norm{A_{\alpha,n}}_{\op}=O(1),
		\qquad
		\tr(A_{\alpha,n}^2)=\frac{n}{2\pi}\int_{-\pi}^{\pi}g_{\alpha,n}(\lambda)^2\,\dd\lambda+o(n\Delta_n),
		\]
		and hence
		\[
		\tr(A_{\alpha,n}^2)\sim \frac{n\Delta_n}{\alpha},
		\qquad
		\frac{S_{\alpha,n}}{\sqrt{n\Delta_n}}\dto \mathcal{N}\Bigl(0,\frac{1}{2\alpha}\Bigr).
		\]
		Moreover, the mixed traces satisfy
		\begin{equation}\label{eq:subcritical-cross-orders}
			\tr(M_{\sigma,n}A_{\alpha,n})=O(n\Delta_n),
			\qquad
			\tr(D_{H,n}A_{\alpha,n})=O\bigl(n\Delta_n|\log\Delta_n|\bigr),
		\end{equation}
		so in particular
		\[
		\tr(M_{\sigma,n}A_{\alpha,n})=o\bigl(v_n\sqrt{n\Delta_n}\bigr),
		\qquad
		\tr(D_{H,n}A_{\alpha,n})=o\bigl(v_n\sqrt{n\Delta_n}\bigr).
		\]
	\end{lemma}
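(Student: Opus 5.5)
The plan is to reuse, essentially verbatim, the machinery of lemma~\ref{lem:alpha-raw}, checking only that nothing in it relied on $H>3/4$. After that, the cross traces are bounded by Cauchy--Schwarz on the symbol side.

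\textbf{Operator norm.} By remark~\ref{rem:gen-rayleigh-whitened}, $\norm{A_{\alpha,n}}_{\op}\le\sup_\lambda|g_{\alpha,n}(\lambda)|$, and the envelope \eqref{eq:alpha-envelope-subcritical} gives $\sup_\lambda|g_{\alpha,n}|\le C$. To justify that envelope, I would note that $\partial_\alpha$ acts only on the OU filter. In $f^{(H)}_\Delta$ it produces the factor $-2\alpha\Delta^2/((\alpha\Delta)^2+(\lambda+2\pi k)^2)$ termwise, which is bounded by $C/(1+(\lambda/\Delta)^2)$ in absolute value. In $f^{(W)}_\Delta$, a direct computation shows that $\partial_\alpha\log f^{(W)}_\Delta$ has the same bound. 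Since $f=f^{(H)}+f^{(W)}$ with both pieces positive, the ratio $\partial_\alpha f/f$ is a convex combination of $\partial_\alpha\log f^{(H)}$ and $\partial_\alpha\log f^{(W)}$, and the bound follows. This step does not depend on $H$.

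\textbf{Trace and CLT.} For the trace I would use the Toeplitz sandwich reduction $\tr(A_{\alpha,n}^2)=\tr(T_n(g_{\alpha,n})^2)+o(n\Delta_n)$, exactly as in lemma~\ref{lem:alpha-raw}. The Fej\'er-kernel identity then passes to $\frac{n}{2\pi}\int g_{\alpha,n}^2$. The remainder is $o(n\Delta_n)$ because $g_{\alpha,n}$ is bounded and varies on the scale $\Delta_n\gg 1/n$, so Fej\'er smoothing at width $1/n$ changes the integral by $o(\Delta_n)$. Substituting $\lambda=\Delta_nu$ and applying dominated convergence with the envelope $C/(1+u^2)\in L^2$ gives $\int g_{\alpha,n}^2\sim 2\pi\Delta_n/\alpha$, hence $\tr(A_{\alpha,n}^2)\sim n\Delta_n/\alpha$. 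The CLT then follows from lemma~\ref{lem:qf-clt}, since $\norm{A_{\alpha,n}}_{\op}/\norm{A_{\alpha,n}}_{\F}=O((n\Delta_n)^{-1/2})\to0$.

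\textbf{Cross traces.} I would use the same reduction, which yields $\tr(M_{\sigma,n}A_{\alpha,n})=\frac{n}{2\pi}\int g_{\sigma,n}g_{\alpha,n}+o(n\Delta_n)$, and similarly for $D_{H,n}$ with $h_n$ in place of $g_{\sigma,n}$. Two pointwise bounds then suffice:
\begin{itemize}
\item $|g_{\sigma,n}|\le 2/\sigma$, which gives $\int|g_{\sigma,n}g_{\alpha,n}|\le C\int(1+(\lambda/\Delta_n)^2)^{-1}\,\dd\lambda=O(\Delta_n)$;
\item $|h_n|=|w_{\Delta_n}\psi_{\Delta_n}|\le C(1+L_n+|\log|\lambda/\Delta_n||)$, which after substituting $\lambda=\Delta_nu$ gives $O(\Delta_n L_n)$, because $\int(1+|\log|u||)/(1+u^2)\,\dd u<\infty$.
\end{itemize}
Here a crude Cauchy--Schwarz bound is enough; no limits need to be identified. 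This yields \eqref{eq:subcritical-cross-orders}.

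\textbf{Comparison with $v_n\sqrt{n\Delta_n}$.} We have $v_n\sqrt{n\Delta_n}=n\Delta_n^{p+1/2}$. Because $p<1/2$, the ratio $n\Delta_n L_n/(n\Delta_n^{p+1/2})=\Delta_n^{1/2-p}L_n\to0$ under $\Delta_n=n^{-\kappa}$.

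\textbf{Main obstacle.} I expect the sandwich error control for the cross traces to be the hardest step. The matrices $M_{\sigma,n}$ and $D_{H,n}$ live on the larger scale $n\Delta_n^{2p}$, so the error $o(n\Delta_n^{2p})$ inherited from lemma~\ref{lem:subcritical-trace} is too coarse here. Instead I would bound the error directly: write $B_{\alpha,n}-T_n(g_{\alpha,n})=R_n$ with $\norm{R_n}_{\F}=o((n\Delta_n)^{1/2})$, as in lemma~\ref{lem:alpha-raw}. Then
\[
|\tr(M_{\sigma,n}R_n)|\le\norm{M_{\sigma,n}}_{\F}\norm{R_n}_{\F}=o\bigl(\sqrt{n\Delta_n^{2p}}\sqrt{n\Delta_n}\bigr)=o(v_n\sqrt{n\Delta_n}),
\]
and likewise for $D_{H,n}$, using $\norm{D_{H,n}}_{\F}^2\asymp n\Delta_n^{2p}$ from lemma~\ref{lem:subcritical-opf}. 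This already gives the final $o(v_n\sqrt{n\Delta_n})$ statements even if the sharper $O(n\Delta_n)$ bounds prove delicate.
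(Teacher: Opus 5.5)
Your proposal follows the paper's proof essentially step for step: the Rayleigh-quotient bound, the Toeplitz sandwich/Fej\'er reduction with $\lambda=\Delta_nu$ for the trace and the CLT, $L^1$ bounds on the mixed symbol products, and then the comparison with $v_n\sqrt{n\Delta_n}=n\Delta_n^{p+1/2}$. The only differences are your explicit convex-combination justification of the envelope for $g_{\alpha,n}$ (which is correct) and your cruder bounds $|g_{\sigma,n}|\le 2/\sigma$, $|h_n|\le C(1+|\log|\lambda||)$ in place of the $\Delta_n^p$-renormalized envelopes; these suffice because $g_{\alpha,n}$ localizes on $|\lambda|\lesssim\Delta_n$.

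The obstacle you flag is real, and the paper glosses over it too: it simply asserts that the triangular-array approximation of lemma~\ref{lem:subcritical-trace} yields \eqref{eq:subcritical-cross-orders}. Your fallback has two problems, however. Since $\tr(M_{\sigma,n}A_{\alpha,n})=\tr(B_{\sigma,n}B_{\alpha,n})$ with the non-symmetric $B_{\sigma,n}=\Sigma_n^{-1}T_n(\partial_\sigma f_{\Delta_n})$, the error term is $\tr(B_{\sigma,n}R_n)$. Cauchy--Schwarz therefore needs a Frobenius bound on $B_{\sigma,n}$, not on $M_{\sigma,n}$, and these differ because $\Sigma_n$ is badly conditioned. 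In addition, the fallback only delivers the weaker $o(v_n\sqrt{n\Delta_n})$ statements, not \eqref{eq:subcritical-cross-orders} itself.
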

	
	\begin{proof}
		The variance statement is proved exactly as in lemma \ref{lem:alpha-raw}: the OU filter localizes $g_{\alpha,n}$ on the region $|\lambda|\asymp\Delta_n$, the operator norm stays bounded, and the Fej\'er-kernel trace approximation yields
		\[
		\tr(A_{\alpha,n}^2)=\frac{n}{2\pi}\int_{-\pi}^{\pi}g_{\alpha,n}(\lambda)^2\,\dd\lambda+o(n\Delta_n)\sim \frac{n\Delta_n}{\alpha}.
		\]
		Hence $S_{\alpha,n}/\sqrt{n\Delta_n}\dto \mathcal{N}(0,1/(2\alpha))$ by lemma \ref{lem:qf-clt}.
		
		For the mixed traces, combine \eqref{eq:subcritical-envelope} and \eqref{eq:alpha-envelope-subcritical}. Using the low frequency change of variables $\lambda=\Delta_n u$, one obtains
		\[
		\int_{-\pi}^{\pi}|g_{\sigma,n}(\lambda)g_{\alpha,n}(\lambda)|\,\dd\lambda
		\le C\Delta_n^p\int_{-\pi}^{\pi}\frac{1+|\lambda|^{-p}}{1+(\lambda/\Delta_n)^2}\,\dd\lambda
		\le C\Delta_n^{p+1}\int_{\R}\frac{1+\Delta_n^{-p}|u|^{-p}}{1+u^2}\,\dd u
		=O(\Delta_n),
		\]
		and similarly
		\[
		\int_{-\pi}^{\pi}|h_n(\lambda)g_{\alpha,n}(\lambda)|\,\dd\lambda
		\le C\Delta_n^p\int_{-\pi}^{\pi}\frac{1+|\lambda|^{-p}(1+|\log|\lambda||)}{1+(\lambda/\Delta_n)^2}\,\dd\lambda
		=O\bigl(\Delta_n|\log\Delta_n|\bigr).
		\]
		These bounds place the mixed symbol products in $L^1([{-}\pi,\pi])$ uniformly in $n$. Therefore the same triangular-array trace approximation used in lemma~\ref{lem:subcritical-trace} applies to the pairs $(\bar g_n,g_{\alpha,n})$ and $(\bar h_n,g_{\alpha,n})$, giving \eqref{eq:subcritical-cross-orders}. Since
		\[
		v_n\sqrt{n\Delta_n}=n\Delta_n^{p+1/2},
		\]
		we have
		\[
		\frac{n\Delta_n}{n\Delta_n^{p+1/2}}=\Delta_n^{1/2-p}\to0,
		\qquad
		\frac{n\Delta_n|\log\Delta_n|}{n\Delta_n^{p+1/2}}=\Delta_n^{1/2-p}|\log\Delta_n|\to0,
		\]
		because $p<1/2$.
	\end{proof}
	
	\begin{proposition}\label{prop:subcritical-full}
		Assume $1/2<H<3/4$. Then
		\[
		\begin{pmatrix}
			S_{\sigma,n}/v_n\\[1mm]
			R_{H,n}/v_n\\[1mm]
			S_{\alpha,n}/\sqrt{n\Delta_n}
		\end{pmatrix}
		\dto \mathcal{N}\Biggl(0,
		\begin{pmatrix}
			\mathcal I_{\sigma,H}^{(<)}(\theta) & 0\\
			0 & \dfrac{1}{2\alpha}
		\end{pmatrix}
		\Biggr).
		\]
		Equivalently,
		\[
		\diag(v_n,v_n,\sqrt{n\Delta_n})^{-1}
		\begin{pmatrix}
			1&0&0\\
			-\sigma\log\Delta_n&1&0\\
			0&0&1
		\end{pmatrix}
		\begin{pmatrix}
			S_{\sigma,n}\\ S_{H,n}\\ S_{\alpha,n}
		\end{pmatrix}
		\dto \mathcal{N}\Biggl(0,
		\begin{pmatrix}
			\mathcal I_{\sigma,H}^{(<)}(\theta) & 0\\
			0 & \dfrac{1}{2\alpha}
		\end{pmatrix}
		\Biggr).
		\]
	\end{proposition}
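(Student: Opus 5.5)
We will prove Proposition~\ref{prop:subcritical-full} by the Cram\'er--Wold device combined with Lemma~\ref{lem:qf-clt}, treating each fixed linear combination of the three normalized scores as one centered Gaussian quadratic form. Fix $u=(u_1,u_2,u_3)\in\R^3$ and set
\[
B_n(u):=\frac{u_1}{v_n}M_{\sigma,n}+\frac{u_2}{v_n}D_{H,n}+\frac{u_3}{\sqrt{n\Delta_n}}A_{\alpha,n},
\]
so that $u_1S_{\sigma,n}/v_n+u_2R_{H,n}/v_n+u_3S_{\alpha,n}/\sqrt{n\Delta_n}=\tfrac12Q_n(B_n(u))$. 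The equivalent second display follows at once from the first, since $M_n^{(<,1)}$ maps $(S_{\sigma,n},S_{H,n})$ to $(S_{\sigma,n},R_{H,n})$ by \eqref{eq:H-score-split}.

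\textbf{Step 1: the variance.} Expand $\tr(B_n(u)^2)$ bilinearly. The diagonal $(\sigma,H)$-block gives
\[
\frac{1}{v_n^2}\bigl(u_1^2\tr(M_{\sigma,n}^2)+2u_1u_2\tr(M_{\sigma,n}D_{H,n})+u_2^2\tr(D_{H,n}^2)\bigr),
\]
which converges to $u_{12}^\top\bigl(2\mathcal I^{(<)}_{\sigma,H}\bigr)u_{12}$ by Lemma~\ref{lem:subcritical-integrals}, where $u_{12}:=(u_1,u_2)^\top$. The $\alpha$-term gives $u_3^2\tr(A_{\alpha,n}^2)/(n\Delta_n)\to u_3^2/\alpha$ by Lemma~\ref{lem:subcritical-alpha}. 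The mixed terms carry the factor $1/(v_n\sqrt{n\Delta_n})$, and they vanish by the $o(v_n\sqrt{n\Delta_n})$ bounds in Lemma~\ref{lem:subcritical-alpha}. Hence
\[
\tfrac14\Var\bigl(Q_n(B_n(u))\bigr)=\tfrac12\tr(B_n(u)^2)\to u^\top\diag\bigl(\mathcal I^{(<)}_{\sigma,H},1/(2\alpha)\bigr)u .
\]

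\textbf{Step 2: the operator/Frobenius ratio.} By the triangle inequality together with Lemmas~\ref{lem:subcritical-opf} and \ref{lem:subcritical-alpha},
\[
\norm{B_n(u)}_{\op}\lesssim\frac{\Delta_n^pn^p\log n}{\sqrt n\,\Delta_n^p}+\frac{1}{\sqrt{n\Delta_n}}=n^{p-1/2}\log n+(n\Delta_n)^{-1/2}\to0 .
\]
If the limiting variance is positive, then $\norm{B_n(u)}_{\F}$ is bounded away from zero and the ratio tends to zero, so Lemma~\ref{lem:qf-clt} yields $\tfrac12Q_n(B_n(u))\dto\mathcal N\bigl(0,u^\top\mathcal I u\bigr)$. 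Positivity of the limit holds for every $u\neq0$, because $\mathcal I^{(<)}_{\sigma,H}$ is positive definite by Lemma~\ref{lem:subcritical-integrals} and $1/(2\alpha)>0$. If the limiting variance is zero (only $u=0$), then $\tfrac12Q_n(B_n(u))\to0$ in $L^2$ and there is nothing to prove. Cram\'er--Wold then concludes the argument.

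\textbf{Main obstacle.} The delicate point is the cross-term negligibility, i.e.\ that $\tr(D_{H,n}A_{\alpha,n})=o(v_n\sqrt{n\Delta_n})$. This is already asserted in Lemma~\ref{lem:subcritical-alpha}, and it rests on the fact that $\Delta_n^{1/2-p}|\log\Delta_n|\to0$, which holds because $p<1/2$. Given that lemma, the decisive structural fact is the scale separation $v_n^2=n\Delta_n^{2p}\gg n\Delta_n$: the $\alpha$-score lives on a strictly smaller scale, so no projection is needed and the limit is block diagonal.
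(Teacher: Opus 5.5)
Your proposal is correct and follows the paper's argument: Cram\'er--Wold applied to the three normalized scores, with the $(\sigma,H)$-block variances taken from Lemma~\ref{lem:subcritical-integrals}, and the $\alpha$-variance and vanishing normalized cross-traces taken from Lemma~\ref{lem:subcritical-alpha}. Where you go beyond the paper's terse proof is in checking $\norm{B_n(u)}_{\op}/\norm{B_n(u)}_{\F}\to0$ for the combined matrix and applying Lemma~\ref{lem:qf-clt} to each linear combination; this is exactly the step that turns marginal convergence plus vanishing cross-covariances into joint Gaussian convergence.
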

	
	\begin{proof}
		The first two coordinates converge by proposition \ref{prop:subcritical-block}, the third by lemma \ref{lem:subcritical-alpha}. The normalized cross-covariances vanish by lemma \ref{lem:subcritical-alpha}, and a final application of Cram\'er--Wold concludes the proof.
	\end{proof}
	
	\section{\texorpdfstring{The regime $0<H<1/2$: complete score CLT in the fBm-dominated normalization}{The regime 0<H<1/2: complete score CLT in the fBm-dominated normalization}}
	We now parallel Appendix~D of \cite{Cai2026}. In this regime the natural small parameter is
	\[
	\varepsilon_n:=\Delta_n^{1-2H}\downarrow0,
	\]
	and we factor the covariance as
	\[
	\Sigma_n(\theta)=\Delta_n^{2H}B_n(\theta),
	\qquad
	B_n(\theta):=T_n\bigl(\sigma^2\widetilde f_{\Delta_n}^{(H)}(\cdot;\alpha,H)+\varepsilon_n\widetilde f_{\Delta_n}^{(W)}(\cdot;\alpha)\bigr).
	\]
	Under $P_\theta$, define the whitened vector
	\[
	\widetilde Z_n:=\Sigma_n(\theta)^{-1/2}X^{(n)}=\Delta_n^{-H}B_n(\theta)^{-1/2}X^{(n)}\sim N(0,I_n).
	\]
	As before,
	\[
	S_{H,n}=\sigma\log\Delta_n\,S_{\sigma,n}+R_{H,n}.
	\]
	
	\subsection*{D.1. Rescaled score matrices and trace asymptotics}
	Define
	\[
	\widetilde C_{\sigma,n}:=B_n^{-1/2}\widetilde T_n(H)B_n^{-1/2},
	\qquad
	\widetilde D_{H,n}:=B_n^{-1/2}\partial_H\widetilde T_n(H)B_n^{-1/2},
	\]
	so that
	\[
	S_{\sigma,n}=\sigma Q_n(\widetilde C_{\sigma,n}),
	\qquad
	R_{H,n}=\frac{\sigma^2}{2}Q_n(\widetilde D_{H,n}).
	\]
	Write the rescaled symbol ratios
	\[
	\widetilde c_n(\lambda):=\frac{\widetilde f_{\Delta_n}^{(H)}(\lambda;\alpha,H)}{\sigma^2\widetilde f_{\Delta_n}^{(H)}(\lambda;\alpha,H)+\varepsilon_n\widetilde f_{\Delta_n}^{(W)}(\lambda;\alpha)},
	\]
	\[
	\widetilde d_n(\lambda):=\widetilde c_n(\lambda)\,\partial_H\log \widetilde f_{\Delta_n}^{(H)}(\lambda;\alpha,H).
	\]
	Because $H<1/2$, the rescaled fractional symbol is bounded and strictly positive at the origin; moreover,
	\begin{equation}\label{eq:lowH-envelope}
		0\le \widetilde c_n(\lambda)\le C,
		\qquad
		|\widetilde d_n(\lambda)|\le C\bigl(1+|\log|\lambda||\bigr),
		\qquad \lambda\in[-\pi,\pi].
	\end{equation}
	Thus both $\widetilde c_n$ and $\widetilde d_n$ lie in $L^2([ -\pi,\pi])$ uniformly in $n$.
	
	\begin{lemma}\label{lem:lowH-trace}
		Assume $0<H<1/2$. Then
		\[
		\tr(\widetilde C_{\sigma,n}^2)=\frac{n}{2\pi}\int_{-\pi}^{\pi}\widetilde c_n(\lambda)^2\,\dd\lambda+o(n),
		\]
		\[
		\tr(\widetilde C_{\sigma,n}\widetilde D_{H,n})=\frac{n}{2\pi}\int_{-\pi}^{\pi}\widetilde c_n(\lambda)\widetilde d_n(\lambda)\,\dd\lambda+o(n),
		\]
		\[
		\tr(\widetilde D_{H,n}^2)=\frac{n}{2\pi}\int_{-\pi}^{\pi}\widetilde d_n(\lambda)^2\,\dd\lambda+o(n).
		\]
		Moreover,
		\[
		\int_{-\pi}^{\pi}\widetilde c_n(\lambda)^2\,\dd\lambda\to \widetilde K_{\sigma\sigma}(\theta),
		\qquad
		\int_{-\pi}^{\pi}\widetilde c_n(\lambda)\widetilde d_n(\lambda)\,\dd\lambda\to \widetilde K_{\sigma H}(\theta),
		\]
		\[
		\int_{-\pi}^{\pi}\widetilde d_n(\lambda)^2\,\dd\lambda\to \widetilde K_{HH}(\theta),
		\]
		with finite constants depending smoothly on $(\alpha,\sigma,H)$.
	\end{lemma}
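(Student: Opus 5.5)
We follow the pattern of Lemma~\ref{lem:subcritical-trace}: first reduce each trace to a pure Toeplitz trace, then apply a uniform trace theorem, and finally pass to the limit in the spectral integrals.

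\emph{Step 1: reduction to symbol ratios.} Set $\widetilde B_{\sigma,n}:=B_n^{-1}\widetilde T_n(H)$ and $\widetilde B_{H,n}:=B_n^{-1}\partial_H\widetilde T_n(H)$. Since $\widetilde C_{\sigma,n}$ and $\widetilde D_{H,n}$ are similar to these matrices, cyclicity of the trace reduces all three traces to traces of products of $\widetilde B_{\sigma,n}$ and $\widetilde B_{H,n}$. Put $\widetilde G_n:=T_n(\widetilde c_n)$ and $\widetilde H_n:=T_n(\widetilde d_n)$. We compare $\widetilde B_{\sigma,n}$ with $\widetilde G_n$ and $\widetilde B_{H,n}$ with $\widetilde H_n$.

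The key point is that for $H<1/2$ the whitening symbol $b_n:=\sigma^2\widetilde f^{(H)}_{\Delta_n}+\varepsilon_n\widetilde f^{(W)}_{\Delta_n}$ is bounded above and below uniformly in $n$ and $\lambda$. The fractional part is continuous and positive at the origin, the OU factor only modifies it at scale $\Delta_n$, and the Brownian part enters with the vanishing weight $\varepsilon_n$. Consequently $\norm{B_n^{-1}}_{\op}\le C$. The standard estimate for products of Toeplitz matrices with uniformly equicontinuous, or $L^2$-dominated, symbols then gives
\[
\norm{B_n^{-1}T_n(\phi)-T_n(\phi/b_n)}_{\F}=o(\sqrt n)
\]
for $\phi\in\{\widetilde f^{(H)},\partial_H\widetilde f^{(H)}\}$. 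For the logarithmic symbol $\partial_H\widetilde f^{(H)}$ this rests on the envelope \eqref{eq:lowH-envelope}, which puts it in $L^2$. By Cauchy--Schwarz, each trace then differs from the corresponding $\tr(\widetilde G_n^2)$, $\tr(\widetilde G_n\widetilde H_n)$, $\tr(\widetilde H_n^2)$ by $o(n)$.

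\emph{Step 2: the trace theorem.} We apply the Avram / Fej\'er-kernel trace theorem to the pairs $(\widetilde c_n,\widetilde c_n)$, $(\widetilde c_n,\widetilde d_n)$ and $(\widetilde d_n,\widetilde d_n)$. Its error term is controlled uniformly in $n$ through the common $L^2$ envelopes in \eqref{eq:lowH-envelope}, namely a constant and $C(1+|\log|\lambda||)$. This yields the three displayed expansions with errors $o(n)$.

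\emph{Step 3: convergence of the integrals.} For fixed $\lambda\neq0$ we have $\varepsilon_n\to0$, and the OU factor $((\alpha\Delta_n)^2+(\lambda+2\pi k)^2)^{-1}$ tends to $|\lambda+2\pi k|^{-2}$. Hence $\widetilde c_n$ and $\widetilde d_n$ converge pointwise to the corresponding pure-fBm ratios. By dominated convergence with the envelopes $C$ and $C(1+|\log|\lambda||)^2$, the three integrals converge to constants $\widetilde K_{\cdot\cdot}(\theta)$. Smooth dependence on $(\alpha,\sigma,H)$ follows by differentiating under the integral, using the same envelopes.

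\emph{Main obstacle.} We expect Step 1 to be the hardest part, specifically the uniformity of the sandwich approximation. Two points need care: the OU factor creates a transition region at $|\lambda|\asymp\Delta_n$, and the Brownian term $\varepsilon_n\widetilde f^{(W)}$ is large there (of order $\varepsilon_n\Delta_n^{-2}$ near the origin), so it may not be uniformly small. Our first attempt will be to split off the region $|\lambda|\le M\Delta_n$. Its $L^2$ mass is $O(\Delta_n)$, so it contributes $O(n\Delta_n)=o(n)$ to every trace once the ratios are shown to be bounded there, which holds because $0\le\widetilde c_n\le\sigma^{-2}$. Outside this region the symbols are uniformly regular, and the usual Toeplitz sandwich estimate applies.
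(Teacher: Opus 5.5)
Your three steps are the paper's three steps: similarity reduction plus a Toeplitz sandwich, the Avram/Fej\'er trace theorem under the envelopes \eqref{eq:lowH-envelope}, then dominated convergence. However, the justification you give for the sandwich step rests on a false premise. The paper's proof only asserts that step, so following it does not close the gap.

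By \eqref{eq:fH}, $\widetilde f^{(H)}_{\Delta_n}(\lambda)=C(H)\sum_k|\lambda+2\pi k|^{1-2H}/((\alpha\Delta_n)^2+(\lambda+2\pi k)^2)$. The OU factor therefore acts on the whole range $|\lambda|\gg\Delta_n$, turning $|\lambda|^{1-2H}$ into $|\lambda|^{-1-2H}$, not only at scale $\Delta_n$. At $|\lambda|\asymp\Delta_n$ this term is of order $\Delta_n^{-1-2H}$, and $\varepsilon_n\widetilde f^{(W)}_{\Delta_n}$ is of the same order there; your own last paragraph observes this, and it contradicts Step~1. Consequently $\sup_\lambda b_n\asymp\Delta_n^{-1-2H}$ and $\int_{-\pi}^{\pi}\widetilde f^{(H)}_{\Delta_n}(\lambda)^2\,\dd\lambda\asymp\Delta_n^{-1-4H}$. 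The pointwise limit $C(H)\sum_k|\lambda+2\pi k|^{-1-2H}$ is not even integrable, and only the lower bound $\norm{B_n^{-1}}_{\op}\le C$ survives. So no Toeplitz product estimate for uniformly bounded, equicontinuous or $L^2$-dominated symbols applies to $(b_n,\phi)$ with $\phi\in\{\widetilde f^{(H)},\partial_H\widetilde f^{(H)}\}$. The envelope \eqref{eq:lowH-envelope} bounds the ratios $\widetilde c_n,\widetilde d_n$, not $\phi$. Your fallback does not repair this either. On $|\lambda|>M\Delta_n$ the symbols still reach size $(M\Delta_n)^{-1-2H}$, and excising $|\lambda|\le M\Delta_n$ controls the spectral integrals, not $B_n^{-1}$, which couples all frequencies.

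The missing step is to remove the OU singularity before sandwiching.
With $\varphi_n:=e^{-\alpha\Delta_n}$, the sequence $\xi_k:=X_{(k+1)\Delta_n}-\varphi_nX_{k\Delta_n}$ is stationary with spectral density $|1-\varphi_ne^{i\lambda}|^2f_{\Delta_n}(\lambda)$, and this filter does not involve $(\sigma,H)$.
The generalized eigenvalues of Remark~\ref{rem:gen-rayleigh-whitened} are invariant under the congruence by the bidiagonal matrix sending $X^{(n)}$ to $(X_{\Delta_n},\xi_1,\dots,\xi_{n-1})$.
Restricting the Rayleigh quotients to $\{x_1=0\}$ (Cauchy interlacing, with polarization for the mixed trace) changes each trace by at most $2\sup_\lambda(|\widetilde c_n|+|\widetilde d_n|)^2=O(\log^2(1/\Delta_n))=o(n)$.
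What remains is an exactly Toeplitz problem of size $n-1$ with the same ratio symbols $\widetilde c_n,\widetilde d_n$.
Its rescaled whitening symbol $\Delta_n^{-2H}|1-\varphi_ne^{i\lambda}|^2f_{\Delta_n}(\lambda)$ is uniformly bounded and $\approx\sigma^2C(H)|\lambda|^{1-2H}+\varepsilon_n$ near the origin, i.e.\ of the mixed-fGn type met in the mfBm case.
It is still not bounded away from zero uniformly in $n$. So the sandwich estimate you need is a Fisher--Hartwig-type one (zero of order $1-2H<1$, uniform in $\varepsilon_n$), not a consequence of $\norm{B_n^{-1}}_{\op}\le C$.

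A smaller point in Step~2: domination by a fixed envelope does not by itself make the Avram/Fej\'er error $o(n)$. Take $g_n(\lambda)=\cos(n\lambda)$, for which $T_n(g_n)=0$ but $\frac{n}{2\pi}\int g_n^2=n/2$. You need precompactness of $\{\widetilde c_n\}$ and $\{\widetilde d_n\}$ in $L^2$, which the pointwise convergence and domination of your Step~3 supply, so that step should come first.
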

	
	\begin{proof}
		Let
		\[
		\widetilde B_{\sigma,n}:=B_n^{-1}\widetilde T_n(H),
		\qquad
		\widetilde B_{H,n}:=B_n^{-1}\partial_H\widetilde T_n(H).
		\]
		Then $\widetilde C_{\sigma,n}$ and $\widetilde D_{H,n}$ are similar to $\widetilde B_{\sigma,n}$ and $\widetilde B_{H,n}$, respectively. As in the proof of lemma~\ref{lem:C-trace}, the Toeplitz sandwich reduction transfers the trace calculation from these whitened matrices to the pure Toeplitz matrices $T_n(\widetilde c_n)$ and $T_n(\widetilde d_n)$ with an $o(n)$ error. Therefore it suffices to analyze the latter products.
		
		Because $H<1/2$, the envelope \eqref{eq:lowH-envelope} places both $\widetilde c_n$ and $\widetilde d_n$ uniformly in $L^2([{-}\pi,\pi])$. The triangular-array Avram theorem then yields
		\[
		\tr(T_n(\widetilde c_n)^2)=\frac{n}{2\pi}\int_{-\pi}^{\pi}\widetilde c_n(\lambda)^2\,\dd\lambda+o(n),
		\]
		\[
		\tr(T_n(\widetilde c_n)T_n(\widetilde d_n))=\frac{n}{2\pi}\int_{-\pi}^{\pi}\widetilde c_n(\lambda)\widetilde d_n(\lambda)\,\dd\lambda+o(n),
		\]
		\[
		\tr(T_n(\widetilde d_n)^2)=\frac{n}{2\pi}\int_{-\pi}^{\pi}\widetilde d_n(\lambda)^2\,\dd\lambda+o(n).
		\]
		Transferring these identities back to $\widetilde C_{\sigma,n}$ and $\widetilde D_{H,n}$ gives the displayed trace approximations.
		
		For the limits, note that $\varepsilon_n\to0$ implies pointwise convergence of $\widetilde c_n$ and $\widetilde d_n$ to the corresponding pure-fractional ratios. Dominated convergence applies because of \eqref{eq:lowH-envelope}, producing the finite constants $\widetilde K_{\sigma\sigma}(\theta)$, $\widetilde K_{\sigma H}(\theta)$, and $\widetilde K_{HH}(\theta)$.
	\end{proof}
	
	\subsection*{D.2. Operator/Frobenius ratios and the $(\sigma,H)$-block CLT}
	
	\begin{lemma}\label{lem:lowH-opf}
		Assume $0<H<1/2$. Then
		\[
		\frac{\norm{\widetilde C_{\sigma,n}}_{\op}}{\norm{\widetilde C_{\sigma,n}}_{\F}}\to0,
		\qquad
		\frac{\norm{\widetilde D_{H,n}}_{\op}}{\norm{\widetilde D_{H,n}}_{\F}}\to0.
		\]
	\end{lemma}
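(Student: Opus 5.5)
The plan is to bound the operator norms through the generalized Rayleigh quotient of Remark~\ref{rem:gen-rayleigh-whitened}, and to take the Frobenius norms directly from Lemma~\ref{lem:lowH-trace}; the ratios then vanish automatically.

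\textbf{Operator norms.} Since $\widetilde C_{\sigma,n}=B_n^{-1/2}\widetilde T_n(H)B_n^{-1/2}$ and $B_n=T_n(\sigma^2\widetilde f^{(H)}_{\Delta_n}+\varepsilon_n\widetilde f^{(W)}_{\Delta_n})$, the Rayleigh-quotient argument used for Lemma~\ref{lem:C-op} gives
\[
\norm{\widetilde C_{\sigma,n}}_{\op}\le\sup_{\lambda}|\widetilde c_n(\lambda)|\le C
\]
by \eqref{eq:lowH-envelope}. The same argument applied to $\widetilde D_{H,n}$ gives
\[
\norm{\widetilde D_{H,n}}_{\op}\le \sup_{|\lambda|\gtrsim 1/n}|\widetilde d_n(\lambda)| .
\]
Here the logarithmic singularity of $\widetilde d_n$ at the origin is cut off at frequency $\asymp 1/n$, exactly as in the proof of Lemma~\ref{lem:D-op}: frequencies below order $1/n$ are not resolved by an $n\times n$ Toeplitz form. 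With this cutoff, \eqref{eq:lowH-envelope} yields $\norm{\widetilde D_{H,n}}_{\op}=O(\log n)$.

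A more robust alternative avoids the resolution heuristic. Split $\widetilde d_n=\widetilde d_n\mathbf 1_{|\lambda|>1/n}+\widetilde d_n\mathbf 1_{|\lambda|\le 1/n}$. The first piece is controlled by the sup bound. For the second piece, bound $\norm{T_n(\cdot)}_{\op}$ by $n$ times the $L^1$ norm of the symbol; this $L^1$ norm is $O(n^{-1}\log n)$, so the second piece is also $O(\log n)$.

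\textbf{Frobenius norms.} By Lemma~\ref{lem:lowH-trace},
\[
\norm{\widetilde C_{\sigma,n}}_{\F}^2=\frac{n}{2\pi}\widetilde K_{\sigma\sigma}(\theta)+o(n),
\qquad
\norm{\widetilde D_{H,n}}_{\F}^2=\frac{n}{2\pi}\widetilde K_{HH}(\theta)+o(n).
\]
It therefore remains to show $\widetilde K_{\sigma\sigma},\widetilde K_{HH}>0$. The limit symbols are the pure fractional ratios $\sigma^{-2}$ and $\sigma^{-2}\partial_H\log\widetilde f^{(H)}$. The first is constant and positive. The second is a nonzero function: it contains a $-\log|\lambda|$ term near $0$, so it cannot vanish almost everywhere. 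Hence both constants are strictly positive.

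\textbf{Conclusion and main obstacle.} Combining the two steps,
\[
\frac{\norm{\widetilde C_{\sigma,n}}_{\op}}{\norm{\widetilde C_{\sigma,n}}_{\F}}=O(n^{-1/2}),
\qquad
\frac{\norm{\widetilde D_{H,n}}_{\op}}{\norm{\widetilde D_{H,n}}_{\F}}=O\bigl(n^{-1/2}\log n\bigr),
\]
and both tend to zero. The main obstacle is making the $O(\log n)$ operator bound for $\widetilde D_{H,n}$ rigorous, because $\widetilde d_n$ is unbounded at the origin. I would handle this by the $L^1$ splitting described above rather than by the heuristic cutoff. For the sup part I would use that the whitened sandwich is dominated by the symbol ratio, since $B_n$ has a symbol bounded below uniformly in $n$.
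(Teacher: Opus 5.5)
Your overall route matches the paper's. You bound the operator norms by Rayleigh quotients via Remark~\ref{rem:gen-rayleigh-whitened}, take the Frobenius norms from Lemma~\ref{lem:lowH-trace}, and for $\widetilde D_{H,n}$ first invoke the same ``frequencies below $1/n$ are not resolved'' cutoff the paper uses. Your check that $\widetilde K_{\sigma\sigma},\widetilde K_{HH}>0$ is a useful addition, since the paper only asserts $\asymp n$.

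\textbf{The gap.} The $L^1$ splitting you propose as the rigorous replacement for the cutoff fails as stated. The matrix $\widetilde D_{H,n}=B_n^{-1/2}T_n(a_n)B_n^{-1/2}$, with $a_n:=\partial_H\widetilde f^{(H)}_{\Delta_n}$, is not $T_n(\widetilde d_n)$. So the bound $\norm{T_n(g)}_{\op}\le\frac{n}{2\pi}\norm{g}_{L^1}$ cannot be applied to the piece $\widetilde d_n\mathbf 1_{|\lambda|\le 1/n}$ of the \emph{ratio}. Splitting the matrix means splitting the numerator $a_n$. The low piece is then only bounded by $\norm{B_n^{-1}}_{\op}\frac{n}{2\pi}\int_{|\lambda|\le 1/n}|a_n|\,\dd\lambda$, and $\norm{B_n^{-1}}_{\op}$ is merely $O(1)$. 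Near the origin, however, $a_n$ contains the OU term $-2C(H)\log|\lambda|\,|\lambda|^{1-2H}/((\alpha\Delta_n)^2+\lambda^2)$. The bound is therefore of order $n^{2H-1}\Delta_n^{-2}\log n$, which diverges polynomially, e.g.\ for $\Delta_n=n^{-\kappa}$ with $2\kappa>1-2H$. Applying the Rayleigh quotient to the low piece instead just returns $\sup_{|\lambda|\le 1/n}|\widetilde d_n|$, which is what you wanted to avoid. Also, the uniform lower bound on the symbol of $B_n$ plays no role in the sup part: the Rayleigh-quotient domination holds for any positive symbol.

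\textbf{The fix.} No cutoff is needed, because for $H<1/2$ the ratio $\widetilde d_n$ is bounded on all of $[-\pi,\pi]$ by $C(1+\log(1/\Delta_n))$. The envelope \eqref{eq:lowH-envelope} is too crude to show this. Write $\widetilde f^{(H)}_{\Delta}=C(H)\sum_k t_k$ with $t_k:=|\lambda+2\pi k|^{1-2H}/((\alpha\Delta)^2+(\lambda+2\pi k)^2)$. Then
$\partial_H\log\widetilde f^{(H)}_{\Delta_n}=\partial_H\log C(H)-2\sum_k\pi_k\log|\lambda+2\pi k|$, with $\pi_k:=t_k/\sum_j t_j$, and $\sum_j t_j\ge t_1\ge c>0$ on $[-\pi,\pi]$.
\begin{itemize}
\item The $k\neq0$ terms are uniformly bounded.
\item The $k=0$ term is at most $2|\log|\lambda||\min\{1,t_0/c\}$, with $t_0\le|\lambda|^{1-2H}(\alpha\Delta_n)^{-2}$.
\item Splitting at $|\lambda|=\Delta_n^{4/(1-2H)}$ and using $1-2H>0$ shows that the $k=0$ term is $O(1+\log(1/\Delta_n))$ uniformly in $\lambda$.
\end{itemize}
Combine this with $0\le\widetilde c_n\le\sigma^{-2}$ and with $\Delta_n\ge 1/n$ eventually (since $n\Delta_n\to\infty$). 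This gives $\sup_\lambda|\widetilde d_n|=O(\log n)$. The plain Rayleigh-quotient bound then yields $\norm{\widetilde D_{H,n}}_{\op}=O(\log n)$ rigorously, and the rest of your argument goes through unchanged.
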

	
	\begin{proof}
		For the operator norms, apply remark~\ref{rem:gen-rayleigh-whitened} with $A_n=\widetilde T_n(H)$ and then with $A_n=\partial_H\widetilde T_n(H)$. This gives
		\[
		\norm{\widetilde C_{\sigma,n}}_{\op}\le \sup_{\lambda\in[-\pi,\pi]}|\widetilde c_n(\lambda)|\le C.
		\]
		Likewise,
		\[
		\norm{\widetilde D_{H,n}}_{\op}\le \sup_{\lambda\in[-\pi,\pi]}|\widetilde d_n(\lambda)|.
		\]
		The logarithmic factor in \eqref{eq:lowH-envelope} is truncated at frequencies of order $1/n$, so the corresponding Toeplitz operator norm is $O(\log n)$. Hence
		\[
		\norm{\widetilde D_{H,n}}_{\op}=O(\log n).
		\]
		On the other hand, lemma \ref{lem:lowH-trace} yields
		\[
		\norm{\widetilde C_{\sigma,n}}_{\F}^2\asymp n,
		\qquad
		\norm{\widetilde D_{H,n}}_{\F}^2\asymp n,
		\]
		so
		\[
		\frac{\norm{\widetilde C_{\sigma,n}}_{\op}}{\norm{\widetilde C_{\sigma,n}}_{\F}}\lesssim n^{-1/2}\to0,
		\qquad
		\frac{\norm{\widetilde D_{H,n}}_{\op}}{\norm{\widetilde D_{H,n}}_{\F}}\lesssim \frac{\log n}{\sqrt n}\to0.
		\]
		This proves the lemma.
	\end{proof}
	
	\begin{proposition}\label{prop:lowH-block}
		Assume $0<H<1/2$. Then
		\[
		\frac1{\sqrt n}
		\begin{pmatrix}
			S_{\sigma,n}\\ R_{H,n}
		\end{pmatrix}
		\dto N\bigl(0,\mathcal I_{\sigma,H}^{(0)}(\theta)\bigr),
		\]
		where
		\[
		\mathcal I_{\sigma,H}^{(0)}(\theta)
		:=
		\begin{pmatrix}
			\dfrac{\sigma^2}{\pi}\widetilde K_{\sigma\sigma}(\theta) & \dfrac{\sigma^3}{2\pi}\widetilde K_{\sigma H}(\theta)\\[2mm]
			\dfrac{\sigma^3}{2\pi}\widetilde K_{\sigma H}(\theta) & \dfrac{\sigma^4}{4\pi}\widetilde K_{HH}(\theta)
		\end{pmatrix}.
		\]
		Equivalently,
		\[
		\frac1{\sqrt n}
		\begin{pmatrix}
			1&0\\
			-\sigma\log\Delta_n&1
		\end{pmatrix}
		\begin{pmatrix}
			S_{\sigma,n}\\ S_{H,n}
		\end{pmatrix}
		\dto \mathcal{N}\bigl(0,\mathcal I_{\sigma,H}^{(0)}(\theta)\bigr).
		\]
	\end{proposition}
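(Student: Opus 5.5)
The plan is to follow the proof of Proposition~\ref{prop:subcritical-block}: Cram\'er--Wold reduces the joint convergence to a scalar CLT for one centered Gaussian quadratic form, which Lemma~\ref{lem:qf-clt} then handles. Fix $u=(u_1,u_2)\in\R^2$. The representations in D.1 give $S_{\sigma,n}=\sigma Q_n(\widetilde C_{\sigma,n})$ and $R_{H,n}=\frac{\sigma^2}{2}Q_n(\widetilde D_{H,n})$, where the $Z_n$ in $Q_n$ is the whitened vector $\widetilde Z_n\sim N(0,I_n)$. Hence
\[
u_1S_{\sigma,n}+u_2R_{H,n}=Q_n\bigl(A_n(u)\bigr),\qquad A_n(u):=u_1\sigma\,\widetilde C_{\sigma,n}+u_2\tfrac{\sigma^2}{2}\widetilde D_{H,n}.
\]
The second (``equivalently'') form of the statement follows from the first once the identity $S_{H,n}=\sigma\log\Delta_n\,S_{\sigma,n}+R_{H,n}$ is invoked.

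First I identify the variance. By \eqref{eq:varofGQF} and bilinearity, $\Var(Q_n(A_n(u)))=2\tr(A_n(u)^2)$, and
\[
\tr(A_n(u)^2)=u_1^2\sigma^2\tr(\widetilde C_{\sigma,n}^2)+u_1u_2\sigma^3\tr(\widetilde C_{\sigma,n}\widetilde D_{H,n})+u_2^2\tfrac{\sigma^4}{4}\tr(\widetilde D_{H,n}^2).
\]
Lemma~\ref{lem:lowH-trace} supplies the three trace approximations and the limits $\widetilde K_{\bullet\bullet}$. Dividing $2\tr(A_n(u)^2)$ by $n$ then gives
\[
\frac{2}{n}\tr(A_n(u)^2)\to\frac{1}{\pi}\Bigl(u_1^2\sigma^2\widetilde K_{\sigma\sigma}+u_1u_2\sigma^3\widetilde K_{\sigma H}+u_2^2\tfrac{\sigma^4}{4}\widetilde K_{HH}\Bigr)=u^\top\mathcal I^{(0)}_{\sigma,H}(\theta)u,
\]
which matches the stated matrix entry by entry.

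Second I check the operator/Frobenius condition. By Lemma~\ref{lem:lowH-opf} and the triangle inequality, $\norm{A_n(u)}_{\op}\le C(1+\log n)$. The Frobenius side is where the real work lies. The bound $\norm{A_n(u)}_{\F}^2\asymp n$ needs the limiting quadratic form to be strictly positive for $u\ne0$, that is, $\mathcal I^{(0)}_{\sigma,H}(\theta)$ must be positive definite. I would argue this as in Lemma~\ref{lem:subcritical-integrals}. The limit symbols, namely the pure-fractional ratio $c(\lambda)=1/\sigma^2$ (constant) and $d(\lambda)=\sigma^{-2}\partial_H\log\widetilde f^{(H)}$, are not collinear in $L^2$, because $d$ contains the non-constant factor $-2\log|\lambda|$ plus an aliasing term. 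Cauchy--Schwarz is therefore strict, and $\widetilde K_{\sigma\sigma}\widetilde K_{HH}>\widetilde K_{\sigma H}^2$. With this in hand, $\norm{A_n(u)}_{\op}/\norm{A_n(u)}_{\F}\lesssim \log n/\sqrt n\to0$ for every $u\ne0$. (If $u=0$ the claim is trivial.)

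Third, Lemma~\ref{lem:qf-clt} gives $Q_n(A_n(u))/\sqrt{n}\dto N(0,u^\top\mathcal I^{(0)}_{\sigma,H}u)$, and the Cram\'er--Wold device completes the proof. I expect the main obstacle to be the non-degeneracy step, since the only alternative would be a degenerate direction, which would break the Frobenius lower bound. A secondary point to confirm is that the normalization constants ($\sigma$ versus $\tfrac12$, and $\sigma^2/2$) line up with the diagonal entries $\sigma^2/\pi$ and $\sigma^4/(4\pi)$. The computation above shows that they do.
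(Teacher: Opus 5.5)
Your proposal is correct and follows the paper's proof essentially line by line: the same matrix $A_n(u)=u_1\sigma\widetilde C_{\sigma,n}+u_2\frac{\sigma^2}{2}\widetilde D_{H,n}$, the variance identification via bilinearity and Lemma~\ref{lem:lowH-trace}, the op/F check via Lemma~\ref{lem:lowH-opf}, then Lemma~\ref{lem:qf-clt} with Cram\'er--Wold. Your extra non-degeneracy step (strict Cauchy--Schwarz between the constant limit symbol and the log-singular one) makes explicit the lower bound $\|A_n(u)\|_{\F}^2\asymp n$ that the paper tacitly uses. That step is not strictly needed, however: in a degenerate direction $2\tr(A_n(u)^2)/n\to0$, and Chebyshev already gives convergence to the point mass $N(0,0)$.
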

	
	\begin{proof}
		For each $u=(u_1,u_2)\in\R^2$, define
		\[
		A_n(u):=u_1\sigma\widetilde C_{\sigma,n}+u_2\frac{\sigma^2}{2}\widetilde D_{H,n}.
		\]
		Then
		\[
		u_1S_{\sigma,n}+u_2R_{H,n}=Q_n\bigl(A_n(u)\bigr).
		\]
		By lemma \ref{lem:lowH-opf},
		\[
		\frac{\norm{A_n(u)}_{\op}}{\norm{A_n(u)}_{\F}}\to0,
		\]
		so the Gaussian quadratic-form CLT applies to every fixed linear combination. Moreover, bilinearity and lemma \ref{lem:lowH-trace} give
		\[
		\frac1n\tr\bigl(A_n(u)^2\bigr)
		\to u_1^2\sigma^2\frac{\widetilde K_{\sigma\sigma}(\theta)}{2\pi}
		+u_1u_2\sigma^3\frac{\widetilde K_{\sigma H}(\theta)}{2\pi}
		+u_2^2\frac{\sigma^4}{4}\frac{\widetilde K_{HH}(\theta)}{2\pi}.
		\]
		Since
		\[
		\Var\bigl(Q_n(A_n(u))\bigr)=2\tr\bigl(A_n(u)^2\bigr),
		\]
		the limiting variance is exactly $u^\top \mathcal I_{\sigma,H}^{(0)}(\theta)u$. The Cram\'er--Wold device concludes the proof.
	\end{proof}
	
	\subsection*{D.3. The drift score and the complete three parameter limit}
	
	\begin{lemma}\label{lem:lowH-alpha}
		Assume $0<H<1/2$. Then
		\[
		\frac{S_{\alpha,n}}{\sqrt{n\Delta_n}}\dto \mathcal{N}\Bigl(0,\frac{1}{2\alpha}\Bigr),
		\]
		and the mixed traces satisfy
		\begin{equation}\label{eq:lowH-cross-orders}
			\tr(\widetilde C_{\sigma,n}A_{\alpha,n})=O(n\Delta_n),
			\qquad
			\tr(\widetilde D_{H,n}A_{\alpha,n})=O\bigl(n\Delta_n|\log\Delta_n|\bigr).
		\end{equation}
		In particular,
		\[
		\tr(\widetilde C_{\sigma,n}A_{\alpha,n})=o\bigl(\sqrt n\sqrt{n\Delta_n}\bigr),
		\qquad
		\tr(\widetilde D_{H,n}A_{\alpha,n})=o\bigl(\sqrt n\sqrt{n\Delta_n}\bigr).
		\]
	\end{lemma}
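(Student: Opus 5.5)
The proof runs parallel to Lemma~\ref{lem:subcritical-alpha}, with the rescaled objects of Section~D.1 replacing the subcritical ones. There are three steps: the marginal CLT for the drift score, the two mixed-trace bounds, and the comparison with the product of the two normalizations.

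\emph{Step 1: the marginal CLT.} We first establish the analogue of the envelope \eqref{eq:alpha-envelope-subcritical} in the present regime,
\[
|g_{\alpha,n}(\lambda)|\le \frac{C}{1+(\lambda/\Delta_n)^2},
\qquad
g_{\alpha,n}(\Delta_nu)\to q_\alpha(u).
\]
This holds because $\partial_\alpha$ acts only on the common OU denominator $(\alpha\Delta_n)^2+\lambda^2$. Differentiating that denominator gives the factor $-2\alpha\Delta_n^2/((\alpha\Delta_n)^2+\lambda^2)$ in both spectral pieces, and it remains bounded in the ratio with the full density. This uses only the OU filter, so the fact that the fBm part now dominates (the factor $\varepsilon_n$) is irrelevant here. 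Remark~\ref{rem:gen-rayleigh-whitened} then gives $\norm{A_{\alpha,n}}_{\op}=O(1)$. The Toeplitz sandwich and Fej\'er reduction of Lemma~\ref{lem:alpha-raw} gives $\tr(A_{\alpha,n}^2)\sim n\Delta_n/\alpha$. Lemma~\ref{lem:qf-clt} then yields $S_{\alpha,n}/\sqrt{n\Delta_n}\dto\mathcal N(0,1/(2\alpha))$.

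\emph{Step 2: the mixed traces.} Write $\widetilde C_{\sigma,n}$ and $\widetilde D_{H,n}$ in terms of the same whitening $\Sigma_n(\theta)^{-1/2}=\Delta_n^{-H}B_n^{-1/2}$. By cyclicity and similarity, each mixed trace becomes a trace of a product of two whitened Toeplitz operators. The sandwich reduction, as in Lemma~\ref{lem:lowH-trace}, then replaces them by $\tr(T_n(\widetilde c_n)T_n(g_{\alpha,n}))$ and $\tr(T_n(\widetilde d_n)T_n(g_{\alpha,n}))$. The error in this replacement is controlled by $\norm{\cdot}_{\F}$-bounds of order $o(\sqrt n)\cdot O(\sqrt{n\Delta_n})$. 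The Avram/Fej\'er trace identity reduces these traces to $\frac{n}{2\pi}\int \widetilde c_n g_{\alpha,n}$ and $\frac{n}{2\pi}\int\widetilde d_n g_{\alpha,n}$. Using \eqref{eq:lowH-envelope} and substituting $\lambda=\Delta_nu$,
\[
\int_{-\pi}^{\pi}|\widetilde c_n g_{\alpha,n}|\,\dd\lambda\le C\Delta_n\int_{\R}\frac{\dd u}{1+u^2}=O(\Delta_n),
\]
\[
\int_{-\pi}^{\pi}|\widetilde d_n g_{\alpha,n}|\,\dd\lambda\le C\Delta_n\int_{\R}\frac{1+|\log\Delta_n|+|\log|u||}{1+u^2}\,\dd u=O(\Delta_n|\log\Delta_n|).
\]
These give \eqref{eq:lowH-cross-orders}.

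\emph{Step 3: comparison with $\sqrt n\sqrt{n\Delta_n}$.} The ratios are
\[
\frac{n\Delta_n}{n\Delta_n^{1/2}}=\Delta_n^{1/2}\to0,
\qquad
\frac{n\Delta_n|\log\Delta_n|}{n\Delta_n^{1/2}}=\Delta_n^{1/2}|\log\Delta_n|\to0,
\]
which proves the final display.

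The main obstacle is the sandwich error in Step~2. The factors have very different Frobenius scales: $\widetilde C_{\sigma,n}$ and $\widetilde D_{H,n}$ live on scale $\sqrt n$, while $A_{\alpha,n}$ lives on scale $\sqrt{n\Delta_n}$. A Cauchy--Schwarz error bound must therefore be $o(n\Delta_n^{1/2})$, not merely $o(n)$. I would first try to bound the remainder $R_n=\Sigma_n^{-1}T_n(\partial_\alpha f)-T_n(g_{\alpha,n})$ as $\norm{R_n}_{\F}=O(\sqrt{n\Delta_n})$, with $\norm{\widetilde D_{H,n}}_{\op}=O(\log n)$ absorbing the other factor. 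If that fails, a crude bound suffices: $|\tr(XA_{\alpha,n})|\le\norm{X}_{\op}\,\norm{A_{\alpha,n}}_{S_1}$ with trace norm $O(n\Delta_n)$. This gives $O(n\Delta_n\log n)$, which is still $o(n\Delta_n^{1/2})$ for polynomial meshes.
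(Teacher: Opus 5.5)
Your Steps 1--3 are the paper's proof: the OU-filter envelope and Rayleigh bound for $A_{\alpha,n}$, the Fej\'er reduction for $\tr(A_{\alpha,n}^2)$, the $L^1$ bounds on $\widetilde c_n g_{\alpha,n}$ and $\widetilde d_n g_{\alpha,n}$ at scale $\Delta_n$, and the division by $n\Delta_n^{1/2}$. The obstacle you flag is genuine, and the paper does not resolve it: it only invokes ``the same trace theorem used in Lemma~\ref{lem:lowH-trace}'', whose error is $o(n)$.

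Your own Step~2 does not close it either. A sandwich error known only to be $o(\sqrt n)\cdot O(\sqrt{n\Delta_n})=o(n\Delta_n^{1/2})$ is not small compared with the main term $O(n\Delta_n)$. It therefore yields the final ``in particular'' display, but not \eqref{eq:lowH-cross-orders}, which needs the error itself to be $O(n\Delta_n)$, resp.\ $O(n\Delta_n|\log\Delta_n|)$. Your ``first try'' also pairs the wrong norms. The bound $\norm{R_n}_{\F}=O(\sqrt{n\Delta_n})$ can only be combined with the Frobenius norm $O(\sqrt n)$ of the other factor, which gives the borderline $O(n\Delta_n^{1/2})$ rather than $o(n\Delta_n^{1/2})$. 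Using $\norm{\widetilde D_{H,n}}_{\op}$ instead would require a trace-norm bound on $R_n$.

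Your fallback is the right idea, and it is better than the paper's route. The inequality $|\tr(XA_{\alpha,n})|\le\norm{X}_{\op}\norm{A_{\alpha,n}}_{S_1}$ bypasses the mixed sandwich reduction entirely. Its key input, $\norm{A_{\alpha,n}}_{S_1}=O(n\Delta_n)$, is however only asserted. It does not follow from the operator and Frobenius bounds, since $\norm{A_{\alpha,n}}_{S_1}\le\sqrt n\,\norm{A_{\alpha,n}}_{\F}$ gives only $O(n\Delta_n^{1/2})$.

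One way to prove it is as follows. Write \eqref{eq:fW} in aliased form, $f^{(W)}_\Delta(\lambda)=\sum_k\Delta/((\alpha\Delta)^2+(\lambda+2\pi k)^2)$. Then the observation behind your Step~1 gives $\partial_\alpha f_{\Delta_n}\le0$ pointwise, so $A_{\alpha,n}\preceq0$. Since $\Sigma_n\succeq T_n(f^{(W)}_{\Delta_n})$, it follows that
\[
\norm{A_{\alpha,n}}_{S_1}=\tr\bigl(\Sigma_n^{-1}T_n(-\partial_\alpha f_{\Delta_n})\bigr)\le\tr\bigl(T_n(f^{(W)}_{\Delta_n})^{-1}T_n(-\partial_\alpha f_{\Delta_n})\bigr).
\]
The AR(1) precision matrix $T_n(f^{(W)}_{\Delta_n})^{-1}$ is tridiagonal, so the right-hand side can be computed exactly. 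It equals $\frac{n}{2\pi}\int_{-\pi}^{\pi}(-\partial_\alpha f_{\Delta_n})/f^{(W)}_{\Delta_n}\,\dd\lambda+O(1)=O(n\Delta_n)$.

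Combined with $\norm{\widetilde C_{\sigma,n}}_{\op}\le\sigma^{-2}$, this gives the first bound in \eqref{eq:lowH-cross-orders} for every mesh. For the second bound, $\norm{\widetilde D_{H,n}}_{\op}=O(\log n)$ gives $O(n\Delta_n\log n)$. That matches the statement only when $\log n\asymp|\log\Delta_n|$, and polynomial meshes are not assumed in this lemma. To get the stated order in general, bound $\sup_\lambda|\widetilde d_n(\lambda)|$ directly. The factor $\log|\lambda|$ enters $\widetilde d_n$ only through the $k=0$ aliasing term, whose contribution is at most $C\min\{1,|\lambda/\Delta_n|^{1-2H}\}\,|\log|\lambda||$. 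Hence $\norm{\widetilde D_{H,n}}_{\op}=O(1+|\log\Delta_n|)$.
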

	
	\begin{proof}
		The argument is the same as in the main text, but now the $(\sigma,H)$-block is normalized by $\sqrt n$. First,
		\[
		\tr(A_{\alpha,n}^2)=\frac{n}{2\pi}\int_{-\pi}^{\pi}g_{\alpha,n}(\lambda)^2\,\dd\lambda+o(n\Delta_n),
		\]
		with $\sup_\lambda |g_{\alpha,n}(\lambda)|\le C$ and
		\[
		g_{\alpha,n}(\Delta_n u)\to -\frac{2\alpha}{\alpha^2+u^2}.
		\]
		Hence
		\[
		\int_{-\pi}^{\pi}g_{\alpha,n}(\lambda)^2\,\dd\lambda
		\sim \Delta_n\int_{\R}\frac{4\alpha^2}{(\alpha^2+u^2)^2}\,\dd u
		=\Delta_n\frac{2\pi}{\alpha},
		\]
		so $\tr(A_{\alpha,n}^2)\sim n\Delta_n/\alpha$. Because $\norm{A_{\alpha,n}}_{\op}=O(1)$, the quadratic-form CLT yields
		\[
		\frac{S_{\alpha,n}}{\sqrt{n\Delta_n}}\dto N\Bigl(0,\frac1{2\alpha}\Bigr).
		\]
		
		For the mixed traces, combine the localization bound \eqref{eq:alpha-envelope-subcritical} with \eqref{eq:lowH-envelope}. Then
		\[
		\int_{-\pi}^{\pi}|\widetilde c_n(\lambda)g_{\alpha,n}(\lambda)|\,\dd\lambda
		\le C\int_{-\pi}^{\pi}\frac{1}{1+(\lambda/\Delta_n)^2}\,\dd\lambda
		=O(\Delta_n),
		\]
		and
		\[
		\int_{-\pi}^{\pi}|\widetilde d_n(\lambda)g_{\alpha,n}(\lambda)|\,\dd\lambda
		\le C\int_{-\pi}^{\pi}\frac{1+|\log|\lambda||}{1+(\lambda/\Delta_n)^2}\,\dd\lambda
		=O\bigl(\Delta_n|\log\Delta_n|\bigr).
		\]
		Since $\widetilde c_n,\widetilde d_n\in L^2([{-}\pi,\pi])$ uniformly and $g_{\alpha,n}$ is bounded, the same trace theorem used in lemma \ref{lem:lowH-trace} therefore gives \eqref{eq:lowH-cross-orders}. After division by $\sqrt n\sqrt{n\Delta_n}=n\Delta_n^{1/2}$, the two bounds become $\Delta_n^{1/2}$ and $\Delta_n^{1/2}|\log\Delta_n|$, both tending to zero.
	\end{proof}
	
	\begin{proposition}\label{prop:lowH-full}
		Assume $0<H<1/2$. Then
		\[
		\begin{pmatrix}
			S_{\sigma,n}/\sqrt n\\[1mm]
			R_{H,n}/\sqrt n\\[1mm]
			S_{\alpha,n}/\sqrt{n\Delta_n}
		\end{pmatrix}
		\dto \mathcal{N}\Biggl(0,
		\begin{pmatrix}
			\mathcal I_{\sigma,H}^{(0)}(\theta) & 0\\
			0 & \dfrac{1}{2\alpha}
		\end{pmatrix}
		\Biggr).
		\]
		Equivalently,
		\[
		\diag(\sqrt n,\sqrt n,\sqrt{n\Delta_n})^{-1}
		\begin{pmatrix}
			1&0&0\\
			-\sigma\log\Delta_n&1&0\\
			0&0&1
		\end{pmatrix}
		\begin{pmatrix}
			S_{\sigma,n}\\ S_{H,n}\\ S_{\alpha,n}
		\end{pmatrix}
		\dto \mathcal{N}\Biggl(0,
		\begin{pmatrix}
			\mathcal I_{\sigma,H}^{(0)}(\theta) & 0\\
			0 & \dfrac{1}{2\alpha}
		\end{pmatrix}
		\Biggr).
		\]
	\end{proposition}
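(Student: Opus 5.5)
The proof follows the pattern of Proposition~\ref{prop:subcritical-full}, with the $(\sigma,H)$-block now normalized by $\sqrt n$ rather than $v_n$. We apply the Cram\'er--Wold device to the vector $(S_{\sigma,n}/\sqrt n,\,R_{H,n}/\sqrt n,\,S_{\alpha,n}/\sqrt{n\Delta_n})$. For fixed $(u_1,u_2,u_3)\in\R^3$ we write the linear combination as a single centered Gaussian quadratic form in $\widetilde Z_n$:
\[
u_1\frac{S_{\sigma,n}}{\sqrt n}+u_2\frac{R_{H,n}}{\sqrt n}+u_3\frac{S_{\alpha,n}}{\sqrt{n\Delta_n}}
=Q_n\bigl(\mathcal A_n(u)\bigr),
\qquad
\mathcal A_n(u):=\frac{1}{\sqrt n}A_n(u_1,u_2)+\frac{u_3}{2\sqrt{n\Delta_n}}A_{\alpha,n}.
\]
Here $A_n(u_1,u_2)=u_1\sigma\widetilde C_{\sigma,n}+u_2\frac{\sigma^2}{2}\widetilde D_{H,n}$ is the matrix from the proof of Proposition~\ref{prop:lowH-block}. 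Since $\Sigma_n=\Delta_n^{2H}B_n$, we have $\Sigma_n^{-1/2}\partial\Sigma_n\Sigma_n^{-1/2}=B_n^{-1/2}\partial B_n B_n^{-1/2}$ up to the deterministic factor already absorbed in $\widetilde C_{\sigma,n},\widetilde D_{H,n}$. Hence all three matrices are sandwiched by the same whitening, and $A_{\alpha,n}$ coincides with the matrix of Lemma~\ref{lem:lowH-alpha}.

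The first step is the variance. By bilinearity,
\[
2\tr(\mathcal A_n(u)^2)=\frac{2}{n}\tr(A_n(u_1,u_2)^2)+\frac{u_3^2}{2n\Delta_n}\tr(A_{\alpha,n}^2)+\frac{2u_3}{n\sqrt{\Delta_n}}\tr\bigl(A_n(u_1,u_2)A_{\alpha,n}\bigr).
\]
The first term tends to $u_{12}^\top\mathcal I^{(0)}_{\sigma,H}(\theta)u_{12}$ by the computation in Proposition~\ref{prop:lowH-block}. The second tends to $u_3^2/(2\alpha)$ by $\tr(A_{\alpha,n}^2)\sim n\Delta_n/\alpha$ from Lemma~\ref{lem:lowH-alpha}. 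The cross term is bounded by a constant times $(n\Delta_n+n\Delta_n|\log\Delta_n|)/(n\Delta_n^{1/2})=O(\Delta_n^{1/2}|\log\Delta_n|)\to0$, using the mixed-trace orders \eqref{eq:lowH-cross-orders}. So the limiting variance is exactly $u^\top\diag(\mathcal I^{(0)}_{\sigma,H},1/(2\alpha))u$.

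The second step is the operator/Frobenius ratio needed for Lemma~\ref{lem:qf-clt}. By the triangle inequality, together with Lemma~\ref{lem:lowH-opf} (giving $\norm{\widetilde C_{\sigma,n}}_{\op}=O(1)$ and $\norm{\widetilde D_{H,n}}_{\op}=O(\log n)$) and $\norm{A_{\alpha,n}}_{\op}=O(1)$, we get
\[
\norm{\mathcal A_n(u)}_{\op}\lesssim \frac{\log n}{\sqrt n}+\frac{1}{\sqrt{n\Delta_n}}\to0.
\]
When the limiting variance is positive, $\norm{\mathcal A_n(u)}_{\F}$ stays bounded away from zero, so the ratio tends to zero. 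Lemma~\ref{lem:qf-clt} then gives a normal limit with the variance above, and if the limiting variance vanishes the form tends to $0$ in $L^2$. Cram\'er--Wold yields the first display. The equivalent formulation is just the identity $S_{H,n}-\sigma\log\Delta_n S_{\sigma,n}=R_{H,n}$ from \eqref{eq:H-score-split}.

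The main obstacle is the cross term. The order bounds \eqref{eq:lowH-cross-orders} rely on a trace approximation for a mixed pair in which one symbol, $g_{\alpha,n}$, is localized at scale $\Delta_n$. This makes the main term only of order $n\Delta_n$, so the Avram-type error $o(n)$ from Lemma~\ref{lem:lowH-trace} is not sufficient by itself. I would instead bound the cross trace directly with Cauchy--Schwarz:
\[
\bigl|\tr(A_n(u_1,u_2)A_{\alpha,n})\bigr|\le\norm{A_n(u_1,u_2)}_{\F}\norm{A_{\alpha,n}}_{\F}=O\bigl(\sqrt n\sqrt{n\Delta_n}\bigr).
\]
This only gives boundedness after normalization, so to show the cross term vanishes I would take the $o(\cdot)$ statement of Lemma~\ref{lem:lowH-alpha} as given.
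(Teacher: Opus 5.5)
Your proposal is correct and follows the paper's route. You apply Cram\'er--Wold to a single centered quadratic form in $\widetilde Z_n$, obtain the limiting variance from the bilinear trace expansion, and kill the cross term with the mixed-trace orders \eqref{eq:lowH-cross-orders} of Lemma~\ref{lem:lowH-alpha}; the paper's proof also simply invokes that lemma, so taking it as given is legitimate. Your explicit check that $\norm{\mathcal A_n(u)}_{\op}/\norm{\mathcal A_n(u)}_{\F}\to0$ for the combined matrix, together with the degenerate-variance case, fills in a step the paper's one-line proof leaves implicit: marginal limits plus vanishing cross-covariances would not by themselves give joint normality.
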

	
	\begin{proof}
		Combine proposition \ref{prop:lowH-block} and lemma \ref{lem:lowH-alpha}. The normalized cross-covariances vanish by lemma \ref{lem:lowH-alpha} and the Cram\'er--Wold device gives the three dimensional convergence.
	\end{proof}
	
	\begin{remark}
		We intentionally stop at score asymptotic normality in Appendices~C and~D. Once the transformed score vectors above are available, the LAN expansion follows by the same perturbative matrix argument as in Section~5: one expands the covariance perturbation, identifies the deterministic quadratic term through the trace limits, and observes that in the lower regimes no projection is needed beyond removing the explicit $\sigma\log\Delta_n$ term from the raw $H$-score.
	\end{remark}

\end{document}